\documentclass[12pt, letterpaper]{article}

\usepackage[T1]{fontenc}
\usepackage[english]{babel}
\usepackage[utf8]{inputenc}

\usepackage[margin=1truein]{geometry}

\usepackage{amsfonts,amssymb,amsmath,amsthm}
\usepackage{mathtools}
\usepackage{mathrsfs}
\usepackage{enumerate}
\usepackage{verbatim}
\usepackage{caption}

\usepackage[pdftex,colorlinks=true,linkcolor=blue,citecolor=blue,urlcolor=red,unicode=true,hyperfootnotes=true,bookmarksnumbered]{hyperref}

\theoremstyle{plain}
\newtheorem{theorem}{Theorem}[section]
\newtheorem{lemma}[theorem]{Lemma}
\newtheorem{claim}[theorem]{Claim}
\newtheorem{conjecture}[theorem]{Conjecture}
\newtheorem{proposition}[theorem]{Proposition}
\newtheorem{corollary}[theorem]{Corollary}

\newtheorem{obs}{Observation}[section]

\theoremstyle{definition} 

\newtheorem*{remark*}{Remark}
\newtheorem{definition}{Definition}
\newtheorem*{definition*}{Definition}

\renewcommand{\emptyset}{\varnothing}

\renewcommand{\epsilon}{\varepsilon}
\renewcommand{\phi}{\varphi}
\renewcommand{\kappa}{\varkappa}
\newcommand{\R}{\mathbb{R}}
\newcommand{\etamat}{\eta_{\mathrm{mat}}}

\newcommand{\F}{\mathbb{F}}
\newcommand{\Q}{\mathbb{Q}}
\newcommand{\Z}{\mathbb{Z}}
\renewcommand{\P}{\mathbb{P}}
\newcommand{\E}{\mathbb{E}}
\newcommand{\Nz}{\Z_{\ge0}}

\newcommand{\suchthat}{\mathrel{\mkern2mu:\mkern2mu}}
\newcommand{\fbrc}[2]{\left\{#1 \suchthat #2\right\}}

\newcommand{\fbr}[1]{\left\{#1\right\}}
\newcommand{\br}[1]{\left({#1}\right)}
\newcommand{\lbr}[1]{\left|{#1}\right|}
\newcommand{\norminf}[1]{\lVert #1\rVert_{\infty}}

\newcommand{\floor}[1]{\left\lfloor{#1}\right\rfloor}
\newcommand{\ceil}[1]{\left\lceil{#1}\right\rceil}

\newcommand{\wt}[1]{\widetilde{#1}}

\newcommand{\mc}[1]{\mathcal{#1}}

\newcommand{\cl}{\operatorname{cl}}
\newcommand{\seq}{\subseteq}
\newcommand{\Flats}{\mathrm{Flats}}
\newcommand{\Ent}{\operatorname{H}}
\newcommand{\Gent}[1]{\mc{H}^{\text{ent}}_{#1}}
\newcommand{\clGent}[1]{\overline{\Gent{#1}}}

\newcommand{\rk}{\operatorname{rk}}
\newcommand{\frk}[2]{\operatorname{frk}_{#2}(#1)}
\newcommand{\supp}{\operatorname{supp}}

\title{Almost entropic matroids and $(k,L)$-systems}

\author{
    Andrey Kupavskii\footnote{Moscow Institute of Physics and Technology, Russia, St. Petersburg State University; Email: {\tt kupavskii@ya.ru}},
    Nikolai Terekhov\footnote{Laboratory of combinatorial and geometric structures, Moscow Institute of Physics and Technology, Dolgoprudny, Russia; Email: {\tt nikolayterek@gmail.com}}
}

\begin{document}

\maketitle

\begin{abstract}
    For an integer $k$ and a set of integers $L$, a \emph{$(k,L)$-system} is a family of $k$-element sets such that the size of the intersection of any two distinct members belongs to $L$. Let $m(n,k,L)$ denote the maximum size of a $(k,L)$-system on a ground set of $n$ elements.

    In this work, we establish a close connection between the asymptotic growth of $m(n,k,L)$ and almost entropic matroids. As a consequence, we obtain a general computable upper bound on the asymptotics of $m(n,k,L)$ that unifies and strengthens all previously known upper bounds.
\end{abstract}

\section{Introduction}

For an integer $n\ge 1$, let $[n]=\{1,2,\ldots,n\}$. We write $[n]^k$ for the $k$-fold Cartesian product of $[n]$ and regard its elements as vectors $F=(F_1,\ldots,F_k)$. For a set $T$ and an integer $k\ge 0$, write
\(
    \binom{T}{k}=\fbrc{S\seq T}{\lbr{S}=k}
\).

We study families of sets with restrictions on their pairwise intersections, a classical topic in extremal combinatorics, design theory, and coding theory. The central object is the following.

\begin{definition}
    Let $n\ge k\ge 1$ be integers and let $L\seq\{0,\ldots,k-1\}$. A family $\mc{F}\seq\binom{[n]}{k}$ is called a \emph{$(k,L)$-system} if, for every distinct $F,F'\in\mc{F}$, we have
    \(
        \lbr{F\cap F'}\in L
    \).
    Define
    \[
        m(n,k,L)=\max\fbrc{\lbr{\mc{F}}}{\mc{F}\seq\binom{[n]}{k}\text{ is a }(k,L)\text{-system}}.
    \]
\end{definition}
This quantity was introduced in full generality by Deza, Erd\H{o}s and Frankl~\cite{1978Deza}. Some years earlier, in 1975, Ray-Chaudhuri and Wilson~\cite{1975RayChaudhuri}, in the context of studying designs, obtained a generalization of Fisher's inequality, which can be used to get the following general bound: $m(n,k,L)\le \binom{n}{s}$ for all $n\ge k$ and $\lbr{L}=s$. Deza, Erd\H{o}s and Frankl proved a sharper bound that depended on the structure of $L$, albeit only for $n>n_0(k)$. In their paper, they introduced the highly influential Delta-system method; see the recent survey of the first author~\cite{2025Kupavskii}.

An important development for both the study of $m(n,k,L)$ and the Delta-system method came with the paper of F\"uredi~\cite{1983Furedi}. He proved a Ramsey-type result which, at a high level, states that any family of $k$-sets contains a highly homogeneous substructure that includes a constant proportion of all sets from the original family. To state the result, we need to introduce some further definitions.

We say that a family $\mc{M}\seq 2^{[k]}$ is \emph{intersection-closed} if for any $A,B\in \mc{M}$ we have $A\cap B\in \mc{M}$.

\begin{definition}
    Let $k\ge 1$ and $n\ge 1$ be integers, and let $\mc{M}\seq 2^{[k]}$ be an intersection-closed family with $[k]\in\mc{M}$. A family $\mc{F}\seq[n]^k$ is called an \emph{$\mc{M}$-system} if, for every $F,F'\in\mc{F}$, we have
    $\fbrc{i\in[k]}{F_i=F'_i}\in\mc{M}$.
    Define
    \[
        m(n,\mc{M})=
        \max\fbrc{\lbr{\mc{F}}}{\mc{F}\seq[n]^k\text{ is an }\mc{M}\text{-system}}.
    \]
\end{definition}

\begin{theorem}[F\"uredi, \cite{1983Furedi}]\label{thrm:kL-to-M}
    Let $k\ge 1$ and $L\seq\{0,\ldots,k-1\}$. Then
    \begin{equation}\label{eq:kL-to-M}
        m(n,k,L)=\Theta_{k}\!\left(
        \max\fbrc{
            m(n,\mc{M})}{
            \begin{array}{l}
                \mc{M}\seq 2^{[k]}\text{ is intersection-closed},\ [k]\in\mc{M},\\
                \lbr{A}\in L\text{ for every }A\in\mc{M}\setminus\{[k]\}
            \end{array}
        }\right).
    \end{equation}
\end{theorem}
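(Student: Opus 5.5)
Two directions need proof: the lower bound, which turns an $\mc{M}$-system into a $(k,L)$-system, and the upper bound, which extracts an $\mc{M}$-system from a $(k,L)$-system. The upper bound relies on Füredi's structural lemma.

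\emph{Lower bound.} Let $\mc{M}$ be admissible and let $\mc{F}\seq[n]^k$ be an $\mc{M}$-system. Work on the ground set $[k]\times[n]$, of size $kn$. Send $F$ to the $k$-set $\{(i,F_i): i\in[k]\}$. Then $|\phi(F)\cap\phi(F')|=|\{i:F_i=F'_i\}|$. This set of coordinates lies in $\mc{M}\setminus\{[k]\}$ when $F\ne F'$, so its size is in $L$. Hence $\phi$ is injective and its image is a $(k,L)$-system on $kn$ points, giving $m(kn,k,L)\ge m(n,\mc{M})$.

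To pass from $kn$ back to $n$, take $\mc{F}\seq[\lfloor n/k\rfloor]^k$. This gives $m(n,k,L)\ge m(\lfloor n/k\rfloor,\mc{M})$. It then suffices to have $m(\lfloor n/k\rfloor,\mc{M})=\Omega_k(m(n,\mc{M}))$. For this I would take a maximum $\mc{M}$-system on $[n]^k$ and partition $[n]$ into $k$ blocks of size about $n/k$. Each vector then falls into one of $k^k$ block patterns. Restricting to the most popular pattern and relabelling each coordinate injectively into $[\lceil n/k\rceil]$ preserves the agreement sets. This loses only a factor $k^k$, and the rounding from $\lceil n/k\rceil$ to $\lfloor n/k\rfloor$ is handled by monotonicity together with the same trick.

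\emph{Upper bound.} Here I invoke Füredi's theorem. There is $c_k>0$ such that every $k$-uniform family $\mc{F}$ contains a subfamily $\mc{F}^*$ with $|\mc{F}^*|\ge c_k|\mc{F}|$ satisfying two conditions:
\begin{enumerate}[(i)]
\item $\mc{F}^*$ is $k$-partite with parts $X_1,\dots,X_k$, each member meeting each $X_i$ in exactly one point;
\item there is one intersection-closed family $\mc{M}\seq 2^{[k]}$ with $[k]\in\mc{M}$ such that, for every $F\in\mc{F}^*$, the projections $\pi(F\cap F')$ over all $F'\in\mc{F}^*$ form exactly $\mc{M}$.
\end{enumerate}
Condition (ii) is the homogeneity property, and proving it (via the Erdős--Rado sunflower lemma, or the Delta-system argument) is the main obstacle. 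If the paper permits, I would cite it as Füredi's lemma rather than reprove it.

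Proving it from scratch would go as follows:
\begin{enumerate}[(a)]
\item Pass to a $k$-partite subfamily of relative size $k!/k^k$ using a random partition.
\item For each $F$ and each $A\seq[k]$, call the trace $F|_A$ \emph{rich} if it is the kernel of a sunflower of size $k+1$ in the family. Let $\mc{M}(F)$ be the set of $A$ with $F|_A$ rich.
\item Take the most popular $\mc{M}(F)$, losing a factor $2^{2^k}$.
\item Show $\mc{M}$ is intersection-closed: intersecting petals of two large sunflowers produce a large sunflower with kernel $F|_{A\cap B}$.
\item Iterate to make every intersection land in $\mc{M}$. The key point is to delete the $F$ whose traces are non-rich but still intersections. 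By the sunflower lemma, each non-rich trace extends to at most $C_k$ members. A counting argument, repeated a bounded number of times (depending on $k$), then removes only a constant fraction.
\end{enumerate}

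Given (i) and (ii), the upper bound follows quickly. Every $A\in\mc{M}\setminus\{[k]\}$ arises as $\pi(F\cap F')$ for some $F'\ne F$, so $|A|=|F\cap F'|\in L$ and $\mc{M}$ is admissible. Identify each $X_i$ injectively with a subset of $[n]$ and read $F\in\mc{F}^*$ as the vector $(F_1,\dots,F_k)$ with $F_i$ the unique element of $F\cap X_i$. Agreement sets are then exactly $\pi(F\cap F')\in\mc{M}$, so this is an $\mc{M}$-system in $[n]^k$. Therefore $m(n,k,L)\le c_k^{-1}\max_{\mc{M}} m(n,\mc{M})$. Since there are finitely many admissible $\mc{M}$, both directions give the claimed $\Theta_k$.
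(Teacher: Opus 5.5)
Your main argument is correct and matches how the paper treats this statement. The paper gives no proof of its own and imports the result from F\"uredi's 1983 structure theorem, and your two directions are the standard derivation from it. For the lower bound, the embedding $F\mapsto\{(i,F_i):i\in[k]\}$ into $[k]\times[n]$ and the block-pattern averaging that returns from $kn$ to $n$ are fine. For the upper bound, reading the homogeneous $k$-partite subfamily as an $\mathcal{M}$-system is sound. So is the admissibility check, since every $A\in\mathcal{M}\setminus\{[k]\}$ is realized as $\pi(F\cap F')$ with $F'\neq F$.

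Do not rely on the from-scratch sketch, however, because steps (d) and (e) are false as stated.

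\emph{Step (d).} Richness of $F|_A$ and $F|_B$ says nothing about $F|_{A\cap B}$. Consider the already $3$-partite family consisting of:
\begin{itemize}
\item the triples $(a,b,z)$ for $z$ in a large set $Z$;
\item four triples $(a,y_j,z_j')$;
\item four triples $(x_j,b,z_j'')$,
\end{itemize}
with all new vertices distinct. Every $(a,b,z)$ has rich traces on $\{1\}$, $\{2\}$ and $\{1,2\}$. The trace on $\emptyset$ is not rich, because every member contains $a$ or $b$, so no three members are pairwise disjoint. Hence the most popular pattern contains $\{1\}$ and $\{2\}$ but not $\emptyset$, and it is not intersection-closed.

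\emph{Step (e).} The same family refutes the claim that a non-rich trace has at most $C_k$ extensions. The trace $\emptyset$ is non-rich and is realized as $(a,y_1,z_1')\cap(x_1,b,z_1'')$, yet all $|Z|+8$ members contain it. The Erd\H{o}s--Rado lemma only produces a sunflower with \emph{some} kernel, not a prescribed one. What non-richness of a proper trace $D$ actually gives is a cover of its link of size at most $k^2$. Hence some $D\cup\{x\}$ lies in at least a $1/k^2$ fraction of the members containing $D$, and F\"uredi's pruning is built on degree comparisons of this kind.

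\emph{Where intersection-closedness really comes from.} It is obtained only at the end, once two things hold. First, every intersection pattern of $\mathcal{F}^*$ lies in $\mathcal{M}$. Second, $F|_A$ is the kernel of a $(k+1)$-sunflower in $\mathcal{F}^*$ for every $F\in\mathcal{F}^*$ and every $A\in\mathcal{M}$. Then choose $S$ with $S\cap F=F|_A$. Next choose $T$ in a sunflower with kernel $S|_B$ whose petal misses $F$. This gives $T\cap F=F|_{A\cap B}$, so $A\cap B$ is a realized pattern and lies in $\mathcal{M}$.
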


In this theorem, it is crucial for applications that $\mc{M}$ is intersection-closed. To derive bounds on $m(n,k,L)$, we will combine the theorem with the notion of \emph{rank}. For an intersection-closed family $\mc M\seq 2^{[k]}$ with $[k]\in\mc{M}$, define $\rk(\mc{M})$ to be the size of the smallest set $C$ such that $C\not\seq A$ for any $A\in \mc{M}\setminus \fbr{[k]}$.
The following simple but very important observation relates ranks and our extremal functions.
\begin{obs}[\cite{1985Deza,1991Furedi}]\label{obs:mM-rank-bound}
    Let $\mc{M}\seq 2^{[k]}$ be an intersection-closed family with $[k]\in \mc{M}$. Then
    \(m(n,\mc{M})\le n^{\rk(\mc{M})}\).
\end{obs}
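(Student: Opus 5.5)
We prove the bound by exhibiting an injection of the $\mc{M}$-system into the set of value-tuples on a fixed small coordinate set. Let $C\seq[k]$ be a set of size $\rk(\mc{M})$ such that $C\not\seq A$ for every $A\in\mc{M}\setminus\fbr{[k]}$. Such a $C$ exists by the definition of rank: the set $[k]$ itself is not contained in any proper member of $\mc{M}$, so the minimum in the definition is taken over a nonempty collection.

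Let $\mc{F}\seq[n]^k$ be an $\mc{M}$-system. Consider the projection $\pi\colon\mc{F}\to[n]^{C}$ given by $F\mapsto (F_i)_{i\in C}$. The claim is that $\pi$ is injective.

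To see this, suppose $F,F'\in\mc{F}$ satisfy $\pi(F)=\pi(F')$. Then the agreement set $A=\fbrc{i\in[k]}{F_i=F'_i}$ contains $C$. Since $\mc{F}$ is an $\mc{M}$-system, $A\in\mc{M}$. Because $C\seq A$, the choice of $C$ rules out $A\in\mc{M}\setminus\fbr{[k]}$, so $A=[k]$. Hence $F=F'$ as vectors.

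Injectivity gives $\lbr{\mc{F}}\le\lbr{[n]^{C}}=n^{\lbr{C}}=n^{\rk(\mc{M})}$. Taking the maximum over all $\mc{M}$-systems $\mc{F}$ yields $m(n,\mc{M})\le n^{\rk(\mc{M})}$.

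Intersection-closedness of $\mc{M}$ is never used; it matters only for how this observation is combined with Theorem~\ref{thrm:kL-to-M}.
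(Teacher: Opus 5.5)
Your proof is correct and matches the paper's argument: both fix a minimum-size set $C$ not contained in any member of $\mc{M}\setminus\fbr{[k]}$ and observe that $F\mapsto\pi_C(F)$ is injective on an $\mc{M}$-system, giving $\lbr{\mc{F}}\le n^{\lbr{C}}=n^{\rk(\mc{M})}$. Your remark that intersection-closedness is not needed for this bound is also accurate.
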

\begin{proof}
    Assume that $C\seq [k]$ is the smallest set that is not contained in any set of $\mc{M}\setminus\fbr{[k]}$. Take any $\mc{M}$-system $\mc F\seq [n]^k$. Then each member $F\in\mc F$ is uniquely determined by $\pi_C(F)$, that is, by the set of coordinates of $F$ with indices in $C$. Consequently, $\lbr{\mc F}\le n^{\lbr{C}}$.
\end{proof}

By Theorem~\ref{thrm:kL-to-M}, the rank bound naturally transfers to $(k,L)$-systems, giving the bound $m(n,k,L)=O_k(n^{\rk(k,L)})$ for $\rk(k,L)=\max\fbr{\rk(\mc{M})}$ over $\mc{M}$ as in \eqref{eq:kL-to-M}.

It is not difficult to see that $\rk(k,L)\le \lbr{L}$, thus the rank bound is asymptotically at least as strong as the bound $m(n,k,L)\le \binom{n}{\lbr{L}}$ of Ray-Chaudhuri and Wilson. Moreover, it  also achieves the order of the upper bound from the full result of Deza, Erd\H{o}s and Frankl, covering their bound in the non-divisibility case (cf.~\cite{1978Deza}).

The rank bound, however, has limitations. Frankl~\cite{1986Frankl} showed that every rational number at least $1$ occurs as the exponent of $m(n,k,L)$ for some $k$ and $L$. Thus tight upper bounds cannot, in general, be obtained solely from the integer-valued parameter $\rk(k,L)$. Previous proofs of non-integral exponents relied on case-specific arguments; see, for example,~\cite{1984Frankl,1985Furedi,1996Frankl}.

In this paper, we develop a framework that unifies all previous upper bounds and gives a general non-integral sharpening of the rank. It allows us to express the exact asymptotic exponent via almost entropic matroids.

\begin{definition}[\cite{2018Matus}]
    Let $U$ be a finite set. A matroid $N$ on $U$ is called \emph{almost entropic} if for every $\epsilon > 0$ there exists a finite-support random vector $\br{\xi_u}_{u\in U}$ such that
    \[\forall A\seq U\quad |\rk_N(A)-\Ent((\xi_a)_{a\in A})| < \epsilon,\]
    where $\Ent((\xi_a)_{a\in A})$ denotes the Shannon entropy (with logarithms to base \(2\)).
\end{definition}

A priori, it could be unclear how a rank function can be related to the entropy of random vectors. Linear matroids provide a natural example of how such a relation can arise; see Proposition~\ref{prop:linear-matroids-are-almost-entropic}.

We associate an intersection-closed family $\mc{M}\subseteq 2^{[k]}$ with a matroid $N$ by allowing the elements of $[k]$ to be ``blown up'': each $i\in[k]$ is represented by a subset of the ground set of $N$, subject to certain compatibility conditions with $\mc{M}$. The \emph{folding rank} $\frk{\mc{M}}{N}$ measures the amount of blow-up required to encode $\mc{M}$ in $N$; see Definition~\ref{def:folding-rank} for the formal definition.


The following theorem is the main structural result of the paper.

\begin{theorem}\label{thrm:main-intro}
    Let $\mc{M}\seq 2^{[k]}$ be intersection-closed with $[k]\in\mc{M}$. Then
    \[
        m(n,\mc{M})=n^{\etamat(\mc{M})+o(1)},
    \]
    where
    \begin{equation}\label{eq:etamat-def}
        \etamat(\mc{M})=
        \sup\fbrc{
            \dfrac{\rk(N)}{\frk{\mc{M}}{N}}
        }{
            N\text{ is an almost entropic matroid}
        }.
    \end{equation}
\end{theorem}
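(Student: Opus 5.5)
The plan is to prove the two inequalities $m(n,\mc{M})\le n^{\etamat(\mc{M})+o(1)}$ and $m(n,\mc{M})\ge n^{\etamat(\mc{M})-o(1)}$ separately. Both rest on one observation linking $\mc{M}$-systems to entropy. For $S\seq[k]$, let $\cl_{\mc{M}}(S)$ be the smallest member of $\mc{M}$ containing $S$; it exists because $\mc{M}$ is intersection-closed and contains $[k]$. If $F,F'\in\mc{F}$ agree on $S$, then their agreement set lies in $\mc{M}$ and contains $S$, so it contains $\cl_{\mc{M}}(S)$. Hence, for $F$ uniform on an $\mc{M}$-system $\mc{F}$, with $\xi_i=F_i$,
\[
\Ent\bigl(\xi_{\cl_{\mc{M}}(S)}\mid \xi_S\bigr)=0 \quad\text{for all } S\seq[k],\qquad \Ent(\xi_{[k]})=\log\lbr{\mc{F}},\qquad \Ent(\xi_i)\le\log n .
\]
I expect the compatibility conditions in Definition~\ref{def:folding-rank} to be the matroid analogue of these relations. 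Concretely, I assume $\frk{\mc{M}}{N}$ is the least $t$ for which there are sets $\phi(i)$ with $\rk_N(\phi(i))\le t$, $\rk_N(\bigcup_i\phi(i))=\rk(N)$, and the closure of $\bigcup_{i\in S}\phi(i)$ containing $\phi(j)$ for all $j\in\cl_{\mc{M}}(S)$. I will work with this reading, adjusting if the official definition adds further non-degeneracy requirements.

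\emph{Upper bound.} Take an extremal $\mc{F}$ and its entropy vector $h(A)=\Ent(\xi_A)/\log n$. This is an entropic polymatroid, lies in the closure of the entropic cone, and satisfies $h(i)\le 1$, $h([k])=\log\lbr{\mc{F}}/\log n$, and the closure relations above. I then turn $h$ into a matroid.
\begin{enumerate}
\item Approximate $h$ by a rational point of the almost-entropic cone and scale it to an integer polymatroid.
\item Apply the standard "free expansion" that replaces each element $i$ by $\rk(i)$ elements in general position inside its span. By results in the spirit of Mat\'u\v{s}~\cite{2018Matus}, this preserves almost entropicity, and it produces an almost entropic matroid $N$ with sets $\phi(i)$ of rank at most $q\cdot h(i)\le q$.
\item Check that the closure relations survive, since zero conditional entropies become closure containments. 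This gives $\frk{\mc{M}}{N}\le q$ and $\rk(N)\approx q\,h([k])$, hence $\log\lbr{\mc{F}}/\log n\le\etamat(\mc{M})+o(1)$.
\end{enumerate}
A subtlety is that the approximation may violate the exact equalities, so some continuity or compactness argument over the finitely many combinatorial types of $\phi$ will be needed.

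\emph{Lower bound.} Fix an almost entropic $N$ with folding $\phi$, and a random vector $(\xi_u)$ whose entropies are $\epsilon$-close to $\rk_N$. Take $\ell$ i.i.d.\ copies and use joint typicality: coordinate $i$ of a candidate set is the block $(\xi_u)_{u\in\phi(i)}$ over the $\ell$ copies. The typical set of block $i$ has size about $2^{\ell\,\rk(\phi(i))}\le 2^{\ell t}=:n$, and the number of jointly typical full configurations is about $2^{\ell\,\rk(N)}=n^{\rk(N)/t}$. The difficulty is that random typical configurations may agree on a set $S\notin\mc{M}$ without agreeing on $\cl_{\mc{M}}(S)$. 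I would handle this by deletion: a random selection of typical sequences plus alterations, or a hashing/random binning code that enforces the functional dependencies up to vanishing error. One counts pairs agreeing on $S$ but disagreeing somewhere in $\cl_{\mc{M}}(S)$; since $\Ent(\xi_{\phi(\cl S)}\mid\xi_{\phi(S)})\approx 0$, such pairs should be rare. Discarding them loses only a factor $n^{o(1)}$ as $\epsilon\to0$ and $\ell\to\infty$.

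\emph{Main obstacle.} I expect the lower bound to be the hardest step. The issue is that approximate entropies only give approximate functional dependence, while the $\mc{M}$-system condition is exact. I would first try a first-moment deletion argument with a carefully chosen sampling density $n^{-\delta}$ per typical configuration, so that bad pairs are dominated by good configurations. The second difficulty, in the upper bound, is rigorously converting a rational almost entropic polymatroid into an almost entropic matroid without destroying the closure relations.
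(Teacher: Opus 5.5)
Your reading of $\frk{\mc{M}}{N}$ agrees with the paper's. Definitions~\ref{def:folding-closure-systems} and~\ref{def:folding-rank} bound $\lbr{f(u)}$ rather than $\rk_N(f(u))$, but replacing each $\phi(i)$ by a basis of it changes no closure. The paper proves the lower bound in two steps: first $\alpha(\Flats(N))=\rk(N)$ for almost entropic $N$, via exactly compatible quasi-uniform codes, and then the combinatorial transfer of Theorem~\ref{thrm:lower-bound-on-m-via-folding}. Collapsing these into one typical-set construction is a legitimate route.

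\textbf{Lower bound.} The gap is your mechanism for exactness: pairs agreeing on $\phi(S)$ but not on $\phi(\cl_{\mc{M}}(S))$ are \emph{not} rare. Let $\delta_S=\Ent(\xi_{\phi(\cl_{\mc{M}}(S))}\mid\xi_{\phi(S)})$; it is at most $2\epsilon$ but in general positive. After $\ell$ copies, a typical $\phi(S)$-block has about $2^{\ell\delta_S}$ jointly typical continuations. So a fraction $1-2^{-\ell\delta_S}\to1$ of the pairs in the typical set $T$ that agree on $\phi(S)$ are bad, and there are roughly $\lbr{T}\cdot 2^{\ell(\rk(N)-\rk_N(\phi(S)))}$ such pairs. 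Suppose some non-closed $S$ has $\rk_N(\phi(S))<\rk(N)$. Then a random sample must have density about $2^{-\ell(\rk(N)-\rk_N(\phi(S)))}$ before alterations can work, which leaves only about $n^{\rk_N(\phi(S))/t}$ sets; no density $n^{-\delta}$ with $\delta\to0$ works. For example, take $\mc{M}=\{\emptyset,\{1,2\},\{3\},\{1,2,3\}\}$, with $\phi(1),\phi(2)$ parallel and $\phi(3)$ a coloop, and realize the parallel pair by a uniform bit and a slightly noisy copy of it. Random deletion then gives exponent about $1$ instead of $\alpha(\mc{M})=2$.

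The loss $n^{o(1)}$ is attainable, but by a different argument. One option is to make the dependencies exact \emph{before} tensoring, as the paper does with Lemma~\ref{lemma:small-fix-of-conditional-entropy}: adjoin a variable of entropy $o(1)$ as $\delta_S\to0$ that kills the conditional entropy. Typical or fixed-type sequences then have no bad pairs at all (Theorems~\ref{thrm:approximation-with-compatible-with-M}--\ref{thrm:approximation-by-codes}). The other option is deterministic pruning: treat the sets $S$ one at a time and keep, for each $\phi(S)$-value, only its most popular $\phi(\cl_{\mc{M}}(S))$-continuation. This costs at most a factor $2^{\ell(\delta_S+o(1))}=n^{O(\epsilon)}$ per $S$, and the property it creates is inherited by subfamilies (compare Claim~\ref{claim:G-prime-is-good}).

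\textbf{Upper bound.} Your outline is the paper's: entropy vector of the uniform distribution on $\mc{F}$, rational approximation, free expansion, and Mat\'u\v{s}'s theorem (Theorem~\ref{thrm:natural-matroid-is-almost-entropic}, from his 2007 approximation paper). The missing idea is in step~1. The rational approximation $h'$, with common denominator $K$, must be \emph{exactly} compatible with $\mc{M}$. Otherwise the free expansion map $f$ has $\rk_N(f(\cl_{\mc{M}}(S)))>\rk_N(f(S))$ for some $S$ and is not a folding. A generic rational point near $h$ leaves the compatible subspace. Compactness over combinatorial types of $\phi$ cannot repair this, because the matroids grow with the denominator, so there are infinitely many types.

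The paper's remedy is Theorem~\ref{thrm:approximation-rational-vectors}. For $B\in\mc{M}\setminus\{[k]\}$, let $r_B$ equal $1$ on sets $A\not\seq B$ and $0$ otherwise; it is entropic and compatible. By M\"obius inversion on $(\mc{M},\seq)$ (Theorem~\ref{thrm:poset-inverse-for-R}), these vectors form a basis of the rational subspace of compatible vectors vanishing at $\emptyset$. Hence $\{h+\sum_B c_Br_B: c_B>0\}$ is a relatively open subset of that subspace. It consists of compatible almost entropic vectors and accumulates at $h$, so it contains rational points arbitrarily close to $h$.

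Once exact compatibility holds, your ``second difficulty'' disappears. We have $\rk_N(f(S))=Kh'(S)=Kh'(\cl_{\mc{M}}(S))=\rk_N(f(\cl_{\mc{M}}(S)))$, and together with $f(S)\seq f(\cl_{\mc{M}}(S))$ this gives equal closures. Finally, $\rk(f)\le K(1+o(1))$.
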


Together with Theorem~\ref{thrm:kL-to-M}, this gives the corresponding description for $(k,L)$-systems.

\begin{corollary}\label{corollary:alpha-eta-intro}
    Let $k\ge 1$ and $L\seq\{0,\ldots,k-1\}$. Then
    \[
        m(n,k,L)=n^{\etamat(k,L)+o(1)},
    \]
    where $\etamat(k,L)=\max\fbr{\etamat(\mc{M})}$ over $\mc{M}$ as in \eqref{eq:kL-to-M}.
\end{corollary}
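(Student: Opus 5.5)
The plan is to combine Theorem~\ref{thrm:kL-to-M} with Theorem~\ref{thrm:main-intro}; the only point needing care is the interaction between the $\Theta_k$ constant and the $o(1)$ in the exponent. Write $\mathfrak{M}(k,L)$ for the set of families $\mc{M}\seq 2^{[k]}$ allowed in \eqref{eq:kL-to-M}, namely intersection-closed families with $[k]\in\mc{M}$ and $\lbr{A}\in L$ for all $A\in\mc{M}\setminus\{[k]\}$. This set is a subfamily of $2^{2^{[k]}}$, so it is finite, with size bounded in terms of $k$ only. It is also nonempty, since $\{[k]\}$ belongs to it. Hence the maximum $\etamat(k,L)=\max\fbr{\etamat(\mc{M})\suchthat \mc{M}\in\mathfrak{M}(k,L)}$ is a maximum over finitely many real numbers and is well defined. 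Each $\etamat(\mc{M})$ is finite because Theorem~\ref{thrm:main-intro} together with Observation~\ref{obs:mM-rank-bound} gives $\etamat(\mc{M})\le\rk(\mc{M})\le k$.

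For the upper bound, Theorem~\ref{thrm:kL-to-M} gives a constant $C_k$ with
\[
m(n,k,L)\le C_k\max_{\mc{M}\in\mathfrak{M}(k,L)} m(n,\mc{M}).
\]
For each of the finitely many $\mc{M}$, Theorem~\ref{thrm:main-intro} provides a function $\delta_{\mc{M}}(n)\to0$ with $m(n,\mc{M})\le n^{\etamat(\mc{M})+\delta_{\mc{M}}(n)}$. Put $\delta(n)=\max_{\mc{M}}\lbr{\delta_{\mc{M}}(n)}$. This still tends to $0$, because the maximum is over a finite set independent of $n$. Then
\[
m(n,k,L)\le C_k n^{\etamat(k,L)+\delta(n)} = n^{\etamat(k,L)+\delta(n)+\log C_k/\log n},
\]
and the exponent is $\etamat(k,L)+o(1)$.

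For the lower bound, pick $\mc{M}^*\in\mathfrak{M}(k,L)$ attaining the maximum. Theorem~\ref{thrm:kL-to-M} gives
\[
m(n,k,L)\ge c_k\, m(n,\mc{M}^*)\ge c_k\, n^{\etamat(k,L)-\lbr{\delta_{\mc{M}^*}(n)}}
\]
with $c_k>0$, and again the constant $c_k$ is absorbed into the $o(1)$ term as $n\to\infty$.

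No step here is a real obstacle. The two things to check are that the $o(1)$ terms are uniform, which holds because $\mathfrak{M}(k,L)$ is finite and fixed, and that multiplicative constants depending only on $k$ contribute $O_k(1/\log n)=o(1)$ to the exponent.
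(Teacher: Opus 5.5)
Your proposal is correct and is the paper's argument: the paper obtains this corollary directly by combining Theorem~\ref{thrm:kL-to-M} with Theorem~\ref{thrm:main-intro}. You have only made explicit the routine points the paper leaves implicit, namely that there are finitely many admissible $\mc{M}$ (so the $o(1)$ terms are uniform) and that the $\Theta_k$ constants contribute $O_k(1/\log n)$ to the exponent.
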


In particular, the limits $\alpha(\mc{M})=\lim_{n\to\infty}\frac{\log m(n,\mc{M})}{\log n}$ and $\alpha(k,L)=\lim_{n\to\infty}\frac{\log m(n,k,L)}{\log n}$ always exist and are determined exactly by almost entropic matroids.

The characterization in Theorem~\ref{thrm:main-intro} is exact, but it is difficult to use directly because the class of almost entropic matroids has a complicated structure. For upper bounds, we may relax the characterization by taking the supremum in \eqref{eq:etamat-def} over all matroids, rather than only almost entropic matroids. This relaxation, denoted by $\rho(\mc{M})$, can be computed by solving a linear program; see Corollary~\ref{corollary:alpha-le-rho} and Theorem~\ref{thrm:rho-is-a-folding}.
\begin{theorem}\label{thrm:rho-upper-intro}
    Let $\mc{M}\seq 2^{[k]}$ be intersection-closed with $[k]\in\mc{M}$. Then
    \[
        \eta_{mat}(\mc M)\le \rho(\mc M),
    \]
    where $\rho(\mc M)\le \rk(\mc M)$ is the solution to a certain linear program; see Corollary~\ref{corollary:alpha-le-rho}.

    Moreover, for $k\ge 1$ and $L\seq\{0,\ldots,k-1\}$,
    \[
        \alpha(k,L)\le \rho(k,L),
    \]
    where $\rho(k,L)=\max\fbr{\rho(\mc{M})}$ over $\mc{M}$ as in \eqref{eq:kL-to-M}.
\end{theorem}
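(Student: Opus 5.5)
The statement has three parts: $\etamat(\mc M)\le\rho(\mc M)$, $\rho(\mc M)\le\rk(\mc M)$, and $\alpha(k,L)\le\rho(k,L)$. I will define $\rho(\mc M)$ as the supremum of $\rk(N)/\frk{\mc M}{N}$ over \emph{all} matroids $N$, not only almost entropic ones, and then show that this supremum equals the optimum of a linear program (the content of Theorem~\ref{thrm:rho-is-a-folding}). With this definition the first inequality is immediate: every almost entropic matroid is a matroid, so the supremum in \eqref{eq:etamat-def} runs over a subfamily of the one defining $\rho(\mc M)$.

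To get the LP, I will relax the matroid rank function to a polymatroid. Take variables $h(A)\ge 0$ for subsets $A$ of a blown-up ground set, subject to the Shannon inequalities: monotonicity, submodularity and $h(\emptyset)=0$. Add the compatibility constraints with $\mc M$ coming from Definition~\ref{def:folding-rank}, normalized so that the folding rank (the blow-up cost) is at most $1$. The objective is to maximize the total rank.

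The key step is showing that the LP optimum is attained, up to scaling, by actual matroids. Since the LP has rational data, an optimal solution is rational. Clearing denominators gives an integer polymatroid. An integer polymatroid can be realized as a matroid by replacing each element $u$ with $h(\{u\})$ elements in free position, and this operation multiplies both the rank and the folding rank by the same factor. Hence the ratio is preserved, and the LP optimum and the matroid supremum coincide. I expect this to be the main obstacle: one has to check that the folding-rank constraints survive the free-extension construction, and that the blow-up structure of Definition~\ref{def:folding-rank} is scale invariant. The inequality $\eta_{mat}\le\rho$ does not depend on this step, since it uses only the definition of $\rho$ as a supremum over all matroids.

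For $\rho(\mc M)\le\rk(\mc M)$, I will take a smallest set $C$ witnessing $\rk(\mc M)$, as in Observation~\ref{obs:mM-rank-bound}. The compatibility conditions should force the union of the blocks indexed by $C$ to span $N$. By the subadditivity of $\rk_N$ over these blocks, $\rk(N)\le\lbr{C}\cdot\frk{\mc M}{N}$. This is the matroid analogue of the fact, used in that observation, that $\pi_C$ determines $F$.

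Finally, for the $(k,L)$ statement, Corollary~\ref{corollary:alpha-eta-intro} gives $\alpha(k,L)=\max\etamat(\mc M)$ over the families $\mc M$ in \eqref{eq:kL-to-M}. Applying $\etamat(\mc M)\le\rho(\mc M)$ to each of these finitely many families and taking the maximum yields $\alpha(k,L)\le\rho(k,L)$.
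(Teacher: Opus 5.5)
There is a genuine gap in the first inequality. In the statement, $\rho(\mc{M})$ is the optimum of the linear program of Corollary~\ref{corollary:alpha-le-rho}. Its variables are $g(A)$ for $A\seq U$, not subsets of a blown-up ground set; that set is not fixed and would not give a single finite LP. Its constraints are the $1$-polymatroid axioms together with $g(A)=g(\cl_{\mc{M}}(A))$. Renaming the matroid supremum $\rho$ makes $\etamat(\mc{M})\le\rho(\mc{M})$ a tautology and moves its whole content into the inequality ``supremum over all matroids $\le$ LP optimum''.

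Your key step (rationalize an optimal solution, clear denominators, take the free expansion) proves the opposite inequality, LP optimum $\le$ supremum, which this bound does not need. The direction you do need appears only as ``relax the matroid rank function to a polymatroid''. Relaxing $\rk_N$ on $V$ does not by itself give a feasible point on $2^U$. What is missing is the pushforward along the folding. For a matroid $N$ on $V$ and a folding $f$ of $\Flats(N)$ into $\mc{M}$, put $g(A)=\rk_N(f(A))/\rk(f)$ for $A\seq U$. Then:
\begin{itemize}
\item $g$ is submodular, because $\rk_N$ is and $f(A\cap B)\seq f(A)\cap f(B)$ (Proposition~\ref{prop:matroid-can-make-integer-polymatroids});
\item $g(A)\le\sum_{u\in A}\lbr{f(u)}/\rk(f)\le\lbr{A}$;
\item folding Condition~\ref{eq:folding-preserve-cl} gives compatibility, since sets with the same closure in $N$ have the same rank;
\item Condition~\ref{eq:folding-cond-U-eq-V} gives $g(U)=\rk(N)/\rk(f)$.
\end{itemize}
Hence $\rho(\mc{M})\ge\rk(N)/\frk{\mc{M}}{N}$ for every matroid $N$. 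This is the first half of the proof of Theorem~\ref{thrm:rho-is-a-folding}, and it is exactly what $\etamat\le\rho$ requires.

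With that added, the rest goes through. Your bound $\rk(N)\le\lbr{C}\cdot\frk{\mc{M}}{N}$ is correct, since the two folding conditions give $\cl_N(f(C))=V$. Together with the free-expansion direction you sketched, it yields $\rho(\mc{M})\le\rk(\mc{M})$. Proposition~\ref{prop:rho-le-than-rk} gets this in one line on the LP: $g(U)=g(\cl_{\mc{M}}(C))=g(C)\le\lbr{C}$.

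For $\alpha(k,L)\le\rho(k,L)$, routing through Corollary~\ref{corollary:alpha-eta-intro} is valid, but it invokes the hard half of the main theorem, $\alpha\le\etamat$. That half rests on Theorem~\ref{thrm:natural-matroid-is-almost-entropic}, and your route still depends on the missing pushforward. The paper needs no matroids here:
\begin{itemize}
\item Theorem~\ref{thrm:alpha-le-eta} gives $\alpha(\mc{M})\le\eta(\mc{M})$ by the entropy argument;
\item Proposition~\ref{prop:almost-entropic-vector-is-polymatroid} gives $\eta(\mc{M})\le\rho(\mc{M})$, because almost entropic vectors satisfy the Shannon inequalities;
\item Theorem~\ref{thrm:kL-to-M}, via Corollary~\ref{corollary:alpha-kL-exists}, transfers this to $(k,L)$-systems.
\end{itemize}
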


As mentioned in the statement, the quantity $\rho(k,L)$ is at most $\rk(k,L)$. Moreover, unlike the rank bound, it may be non-integral. This bound, being a maximum over finitely many linear programs, is a computable rational number for every fixed $k$ and $L$. All values of $\alpha(k,L)$ currently known to us are equal to $\rho(k,L)$. In particular, equality holds for the cases with $k\le12$ determined in~\cite{1996Frankl,2006Tokushige}, for Frankl's examples realizing arbitrary rational exponents~\cite{1986Frankl}, and for certain families with $\lbr{L}\le3$~\cite{1984Frankl,1985Furedi}. This motivates the following conjecture.

\begin{conjecture}\label{conj:akL-equals-rho}
    Let $k\ge 1$ and $L\seq\{0,\ldots,k-1\}$. Then
    \(\alpha(k,L)=\rho(k,L)\).
\end{conjecture}

Conjecture~\ref{conj:akL-equals-rho} is also consistent with the upper bounds in cases when the exact value is not known, in particular, with the general bounds in~\cite[Section~2.2]{1996Frankl} and with the bounds for $\lbr{L}\le 3$ in~\cite{1984Frankl,1985Furedi}. 

Our framework also leads to the following implications.
\begin{itemize}
\item Using folding ranks, we obtain a general lower bound
\[
    \alpha(k,L)\ge \frac{k}{k-\rk(k,L)+1},
\]
valid whenever $\rk(k,L)\ge 1$. It generalizes the bound  established by F\"uredi~\cite{1991Furedi} in the case $\rk(k,L)=2$; see Corollary~\ref{corollary:rank-lower-bound-kL}.
\item Using the upper bound $\alpha(\mc{M})\le \rho(\mc{M})$, we construct an intersection-closed family $\mc{M}$ for which
$
    \ceil{\alpha(\mc{M})}<\rk(\mc{M})-1
$.
Thus, the natural generalization of F\"uredi's conjecture~\cite{1991Furedi} from $(k,L)$-systems to $\mc{M}$-systems fails; see Corollary~\ref{corollary:counterexample-Furedi-S-all-r}.
\item Using the existence of matroids that are not almost entropic, we show that the natural $\mc{M}$-system analogue of Conjecture~\ref{conj:akL-equals-rho} fails; see Corollary~\ref{corollary:alpha-neq-rho}.
\item Finally, using the undecidability of almost entropic matroids, we show that there is no algorithm that outputs the exact value of $\alpha(\mc{M})$ for every $\mc{M}$, even assuming that $\alpha(\mc{M})$ is always rational; see Section~\ref{sec:computability}.
\end{itemize}

\paragraph{Proof method}


For a family $\mc{F}\seq [n]^k$, $A\subseteq [k]$, and $X\in\pi_A(\mc{F})$, let $\mc F[X]=\{F\in\mc{F}:\pi_A(F)=X\}$ denote the family of extensions of $X$. The non-integral upper bounds on $m(n,k,L)$ in~\cite{1984Frankl,1985Furedi,1986Frankl,1996Frankl} are obtained by controlling the sizes $|\mc F[X]|$. Depending on the case, the proofs prune those $X$ with few extensions, compare extension sizes for different choices of $A$ and $X$, or split the possible $X$ according to how many extensions they admit. The precise implementation, however, depends on the particular $k$ and $L$ under consideration.

Our approach replaces this local analysis by looking at the entropy of $|\mc{F}[X]|$ for a randomly chosen $X$, which aggregates information over all choices of $X$ at once (see the proof of Theorem~\ref{thrm:alpha-le-eta}). This entirely avoids the preliminary pruning steps needed to regularize the extension structure. The condition of being an $\mc{M}$-system translates into dependencies between projections, and entropy inequalities combine these dependencies to bound the size of the family. In particular, Shannon-type information inequalities give the upper bound in Theorem~\ref{thrm:rho-upper-intro}. Conversely, an approximation argument realizes the relevant entropy data by highly regular families (see Theorem~\ref{thrm:approximation-by-codes}), which then leads to Theorem~\ref{thrm:main-intro}.

\paragraph{Proof structure}
We first express $\alpha(\mc{M})$ in terms of almost entropic vectors compatible with $\mc{M}$  (see Definition~\ref{def:entropic-vectors} and~\ref{def:compatibility}). The entropy argument gives the upper bound (see Theorem~\ref{thrm:alpha-le-eta}), while compatible quasi-uniform approximations give the matching lower bound (see Theorem~\ref{thrm:approximation-with-quasi-uniform} and~\ref{thrm:approximation-by-codes}). We then pass to almost entropic matroids by rational approximation and free expansion (see Theorem~\ref{thrm:all-polymatroids-are-foldings-of-matroids} and~\ref{thrm:approximation-folding-of-matroids}).

\paragraph{Paper structure}
In Section~\ref{sec:preliminaries}, we collect the required facts about entropy, polymatroids, matroids, and finite posets.
In Section~\ref{sec:closure-systems}, we develop the basic properties of $\mc{M}$-systems for intersection-closed families $\mc{M}$ and establish the existence of asymptotic exponents $\alpha(\mc{M})$ and $\alpha(k,L)$.
In Section~\ref{sec:upper-bound-via-entropic-vectors}, we prove an upper bound on $m(n,\mc{M})$ in terms of a quantity $\eta(\mc{M})$ related to entropic vectors. The proof is simple but already sufficient to deduce Theorem~\ref{thrm:rho-upper-intro}.
In Section~\ref{sec:furedi}, we discuss applications of the relaxed upper bound in terms of $\rho(\mc{M})$ to the study of F\"uredi's conjecture for $(k,L)$-systems.
In Section~\ref{sec:alpha-equals-eta}, we prove the matching entropic lower bound and obtain $\alpha(\mc{M})=\eta(\mc{M})$.
In Section~\ref{sec:lower-bound-via-foldings}, we show how lower bounds can be transferred between closure systems using folding constructions and derive the general lower bound on $\alpha(k,L)$ via rank.
In Section~\ref{sec:lower-bound-ae-matroids}, we combine foldings with almost entropic matroids to prove the lower bound in Theorem~\ref{thrm:main-intro}.
In Section~\ref{sec:upper-bound-ae-matroids}, we approximate compatible almost entropic vectors by rational entropic vectors and prove the reverse inequality in Theorem~\ref{thrm:main-intro}.
In Section~\ref{sec:computability}, we investigate the computability of $\alpha(\mc{M})$.
In Section~\ref{sec:discussion}, we discuss further consequences, related questions, and open problems.

\section{Preliminaries}\label{sec:preliminaries}

We collect the entropy and matroid facts needed later. The poset material will be used for rational approximation in Section~\ref{sec:upper-bound-ae-matroids}.

\subsection{\texorpdfstring{$U$-indexed multipartite families}{U-indexed multipartite families}}

In what follows, it will be convenient to index the parts of a multipartite family by an arbitrary finite set $U$, rather than by $[k]$.

For a finite set $U$, define
\[
    [n]^U=\fbrc{\fbrc{(x_u,u)}{u\in U}}{x_u\in[n]\text{ for every }u\in U}.
\]
For $F\in[n]^U$ and $A\seq U$, define the projection of $F$ onto $A$ by
\[
    \pi_A(F)=\fbrc{(x,u)\in F}{u\in A}\in[n]^A.
\]
For a family $\mc{F}\seq[n]^U$, define
\[
    \pi_A(\mc{F})
    =
    \fbrc{\pi_A(F)}{F\in\mc{F}}.
\]

For $u\in U$, we write $\pi_u(F)\in[n]$ for the value of $F$ in coordinate $u$.

\subsection{Entropy}

\begin{definition}
    For a finite-support random variable $\xi$, its Shannon entropy is
    \[
        \Ent(\xi)=-\sum_{x\in\supp(\xi)}\P(\xi=x)\log_2\P(\xi=x).
    \]
\end{definition}

To work with our bounds, it is convenient to use the language of entropic vectors.
\begin{definition}[\cite{2007MatusApproximation,2008Yeung}]\label{def:entropic-vectors}
    Let $U$ be a finite set. A vector $g\in\R^{2^U}$, viewed as a function $g:2^U\to\R$, is called an \emph{entropic vector} if there exists a tuple of $\lbr{U}$ finite-support random variables $(\xi_u)_{u\in U}$ such that
    \[
        \forall A\seq U\quad g(A)=\Ent((\xi_u)_{u\in A}).
    \]
    The set of all entropic vectors in $\R^{2^U}$ is denoted by $\Gent{U}$. Its topological closure is denoted by $\clGent{U}$, and its elements are called \emph{almost entropic vectors}.
\end{definition}

The region of almost entropic vectors forms a convex cone. In particular, it is independent of the choice of the logarithm base in the definition of entropy.
\begin{proposition}[\cite{1997Zhang}]\label{prop:almost-entropic-vectors-are-cone}
    Let $U$ be a finite set, let $a_1,a_2\ge0$, and let $g_1,g_2\in\clGent{U}$. Then $a_1g_1+a_2g_2\in\clGent{U}$.
\end{proposition}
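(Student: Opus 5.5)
The claim splits into two parts: the nonnegative multiples $a_1g_1$ and $a_2g_2$ stay in $\clGent{U}$, and the sum of two almost entropic vectors is almost entropic. Since $\clGent{U}$ is closed, and addition and scalar multiplication are continuous, it suffices to prove both parts for entropic vectors and then pass to limits. So the plan is: (i) $\Gent{U}$ is closed under addition; (ii) $\Gent{U}$ is closed under multiplication by positive integers; (iii) $\clGent{U}$ is closed under multiplication by $1/m$, and hence under positive rational scalars; (iv) density of the rationals together with closedness gives all $a\ge0$.

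For (i), take realizations $(\xi_u)$ of $g_1$ and $(\zeta_u)$ of $g_2$ and make them independent. Then $\theta_u=(\xi_u,\zeta_u)$ satisfies $\Ent((\theta_u)_{u\in A})=g_1(A)+g_2(A)$ for every $A\seq U$, by additivity of entropy for independent vectors. Iterating gives (ii). The case $a=0$ is trivial: take all variables constant to realize the zero vector.

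The main obstacle is step (iii), dividing by an integer. Here I would use the asymptotic argument of Zhang--Yeung/Mat\'u\v{s} via typical sets and quasi-uniform approximation.
\begin{itemize}
\item Given entropic $g$ realized by $(\xi_u)$, take $N$ i.i.d.\ copies, which gives $Ng$.
\item Condition on the joint type being typical to obtain a nearly quasi-uniform distribution whose entropy vector is $Ng+o(N)$.
\item Use the known result that $\frac1N\Gent{U}$-type limits cover the ray: for any $c>0$, the vector $cg$ is a limit of entropic vectors.
\end{itemize}

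A cleaner route to the same conclusion mixes with a constant. Let $B$ be a Bernoulli variable with $\P(B=1)=\lambda$, and set $\theta_u=\xi_u$ if $B=1$ and $\theta_u$ constant if $B=0$. For $A\ne\emptyset$ this gives
\[
\Ent(\theta_A)=\lambda g(A)+h(\lambda),
\]
because $B$ is a function of $\theta_A$ whenever $\xi_A$ avoids the constant. That avoidance is arranged by shifting the alphabet so the constant is a fresh symbol.

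Now take $M$ independent copies of $(\xi_u)$ scaled up, i.e.\ realize $Mg$ for large $M$, and apply the mixing with $\lambda=a/M$. The result is within $h(a/M)$ of $ag$, which tends to $0$ as $M\to\infty$. Because the error $h(\lambda)$ is the same across all $A\neq\emptyset$, and $A=\emptyset$ gives $0$, this yields $ag\in\clGent{U}$ for every $a\in[0,1]$. Combined with (ii), it gives every $a\ge0$. Closing under limits then extends everything from entropic $g_1,g_2$ to almost entropic ones.
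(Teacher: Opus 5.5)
Your proof is correct. The independent-join step together with the Bernoulli mixing of $Mg$ against a fresh constant, which gives $ag(A)+h(a/M)$ for every $A\ne\emptyset$ and $0$ for $A=\emptyset$, is the standard argument behind the cited result (see also \cite{2008Yeung}); the paper itself gives no proof and simply cites \cite{1997Zhang}.

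You should delete the first bulleted sketch for step (iii). Its last bullet assumes the conclusion, so it is not an argument. The mixing argument already works for every $a\le M$, so that sketch, and step (iv) with it, is unnecessary.
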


\subsection{Polymatroids and matroids}

Matroids admit several equivalent definitions; here we use the characterization in terms of their rank functions.

\begin{definition}[\cite{2011Oxley}]
    Let $U$ be a finite set. A \emph{matroid} $N$ on $U$ is a pair $(U,\rk_N)$, where $\rk_N:2^U\to\Z_{\ge0}$ satisfies
    \begin{align*}
        \forall A\seq U\quad & 0 \le \rk_N(A) \le \lbr{A},\\
        \forall A\seq B\seq U\quad & \rk_N(A) \le \rk_N(B),\\
        \forall A,B\seq U\quad & \rk_N(A \cup B) + \rk_N(A \cap B)
        \le \rk_N(A) + \rk_N(B).
    \end{align*}
    The third condition is called \emph{submodularity}. The rank of the matroid $N$ is $\rk(N)=\rk_N(U)$.
\end{definition}

Polymatroids generalize matroids and provide the natural language for entropy vectors. We use the definitions from~\cite{2023Bonin}.
\begin{definition}
\begin{subequations}\label{eq:polymatroid-conditions}
    Let $U$ be a finite set. A pair $(U,g)$ is called a \emph{polymatroid} on $U$ if $g:2^U\to\R$ satisfies
    \begin{align}
        &g(\emptyset)=0;\label{eq:polymatroid-zero-on-empty}\\
        \forall A\seq B\seq U\quad &g(A)\le g(B);\label{eq:polymatroid-monotone}\\
        \forall A,B\seq U\quad &g(A\cup B)+g(A\cap B)\le g(A)+g(B).\label{eq:polymatroid-submodular}
    \end{align}
    It is called a \emph{$1$-polymatroid} if, in addition,
    \begin{equation}
        \forall A\seq U\quad g(A)\le \lbr{A}.\label{eq:polymatroid-one-bounded}
    \end{equation}
\end{subequations}
    An \emph{integer polymatroid} is a polymatroid for which $g$ is integer-valued.
\end{definition}
Thus matroids are precisely the integer $1$-polymatroids.

Each polymatroid naturally induces an intersection-closed family.
\begin{definition}
    Let $(U,g)$ be a polymatroid. Define its family of \emph{flats} by
    \[
        \Flats(g)=\fbrc{A\seq U}{\forall u\in U\setminus A,\ g(A)<g(A\cup\fbr{u})}.
    \]
\end{definition}
\begin{proposition}\label{prop:flats-of-polymatroid-are-intersection-closed}
    If $(U,g)$ is a polymatroid, then $\Flats(g)$ contains $U$ and is closed under intersections.
\end{proposition}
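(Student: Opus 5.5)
The statement has two parts: $U\in\Flats(g)$, and $\Flats(g)$ is closed under intersections. Both follow directly from the definition of flats, using only monotonicity \eqref{eq:polymatroid-monotone} and submodularity \eqref{eq:polymatroid-submodular}.

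For the first part, the condition defining a flat quantifies over $u\in U\setminus U=\emptyset$. It therefore holds vacuously, so $U\in\Flats(g)$.

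For the second part, take $A,B\in\Flats(g)$ and $u\in U\setminus(A\cap B)$. We must show $g(A\cap B)<g((A\cap B)\cup\{u\})$. Since $u\notin A\cap B$, by symmetry we may assume $u\notin A$, so that $g(A)<g(A\cup\{u\})$ because $A$ is a flat.

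We argue by contradiction. Suppose $g((A\cap B)\cup\{u\})\le g(A\cap B)$; by monotonicity this means equality. Apply submodularity to the sets $A$ and $(A\cap B)\cup\{u\}$. Their union is $A\cup\{u\}$. Their intersection is $A\cap B$, because $u\notin A$. Submodularity then gives
\[
g(A\cup\{u\})+g(A\cap B)\le g(A)+g((A\cap B)\cup\{u\})=g(A)+g(A\cap B).
\]
Hence $g(A\cup\{u\})\le g(A)$, which contradicts $A$ being a flat. So $g(A\cap B)<g((A\cap B)\cup\{u\})$ for every such $u$, and $A\cap B\in\Flats(g)$.

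Note that only the flat property of whichever of $A,B$ misses $u$ is used. There is no real obstacle; the only care needed is computing the union and intersection correctly, which relies on $u\notin A$.
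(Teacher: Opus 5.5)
Your proof is correct and follows essentially the same route as the paper: the vacuous argument for $U$, then assuming without loss of generality that $u\notin A$ and applying submodularity to $A$ and $(A\cap B)\cup\{u\}$ to get $g((A\cap B)\cup\{u\})-g(A\cap B)\ge g(A\cup\{u\})-g(A)>0$. The only difference is that you wrap this in a contradiction and invoke monotonicity, neither of which is needed, since the submodularity inequality gives the strict increase directly.
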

\begin{proof}
    $U$ lies in $\Flats(g)$ since $U\setminus U$ is empty.

    Let $A,B\in\Flats(g)$ and $x\in U\setminus(A\cap B)$. Without loss of generality, assume that $x\notin A$. Then
    \[g(A\cup \fbr{x}) - g(A) > 0.\]
    By submodularity, we have
    \[g((A\cap B)\cup \fbr{x}) - g(A\cap B) \ge g(A\cup \fbr{x}) - g(A) > 0.\]
    Since $x$ is arbitrary, $A\cap B$ lies in $\Flats(g)$.
\end{proof}

Matroids can be used to construct integer polymatroids in the following way.
\begin{proposition}\label{prop:matroid-can-make-integer-polymatroids}
    Let $U$ and $V$ be finite sets, let $(V,\rk_M)$ be a matroid on $V$, and let $f:U\to 2^V$ be an arbitrary map. Define $g:2^U\to \Nz$ by
    \[\forall A\seq U\quad g(A)=\rk_M(f(A)),\]
    where $f(A)=\cup_{u\in A}f(u)$. Then $(U,g)$ is an integer polymatroid.
\end{proposition}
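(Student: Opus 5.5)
We verify the three polymatroid axioms \eqref{eq:polymatroid-zero-on-empty}--\eqref{eq:polymatroid-submodular} for $g$ directly from the matroid axioms for $\rk_M$. Integrality is immediate, since $\rk_M$ takes values in $\Nz$. The only tool needed is how the extension $f(A)=\bigcup_{u\in A}f(u)$ interacts with unions, intersections and inclusions of subsets of $U$.

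\emph{Zero on the empty set.} We have $f(\emptyset)=\emptyset$, and $0\le\rk_M(\emptyset)\le\lbr{\emptyset}=0$, so $g(\emptyset)=0$.

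\emph{Monotonicity.} If $A\seq B\seq U$, then $f(A)\seq f(B)$. Monotonicity of $\rk_M$ then gives $g(A)\le g(B)$.

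\emph{Submodularity.} This is the only step requiring a small observation. Union is preserved exactly: $f(A\cup B)=f(A)\cup f(B)$. Intersection is preserved only up to inclusion, $f(A\cap B)\seq f(A)\cap f(B)$, and equality may fail because the images $f(u)$ can overlap. We therefore combine monotonicity with submodularity of $\rk_M$:
\[
    g(A\cup B)+g(A\cap B)\le \rk_M(f(A)\cup f(B))+\rk_M(f(A)\cap f(B))\le \rk_M(f(A))+\rk_M(f(B))=g(A)+g(B).
\]
The first inequality uses the union identity together with monotonicity applied to the inclusion for the intersection. The second is submodularity of $\rk_M$.

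No step is a genuine obstacle. The only point needing care is the failure of $f(A\cap B)=f(A)\cap f(B)$, and monotonicity handles it, since the inclusion goes in the favorable direction.
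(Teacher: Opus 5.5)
Your proof is correct and follows the paper's argument: the paper also treats the empty-set, monotonicity and integrality conditions as immediate. For submodularity it likewise combines $f(A\cup B)=f(A)\cup f(B)$, the inclusion $f(A\cap B)\seq f(A)\cap f(B)$ with monotonicity of $\rk_M$, and then submodularity of $\rk_M$.
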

\begin{proof}
    Conditions~\eqref{eq:polymatroid-zero-on-empty},~\eqref{eq:polymatroid-monotone}, and integrality are obvious.

    To check Condition~\eqref{eq:polymatroid-submodular}, fix $A,B\seq U$. By submodularity of $\rk_M$,
    \[\rk_M(f(A)\cup f(B)) + \rk_M(f(A)\cap f(B)) \le \rk_M(f(A)) + \rk_M(f(B)).\]
    Since $f(A\cap B)\seq f(A)\cap f(B)$, we have
    \begin{align*}
        g(A\cup B) + g(A\cap B)&\le \rk_M(f(A)\cup f(B)) + \rk_M(f(A)\cap f(B))\\
        &\le \rk_M(f(A)) + \rk_M(f(B)) = g(A) + g(B).
    \end{align*}
\end{proof}

Conversely, every integer polymatroid arises from a matroid in this way~\cite{1974Helgason,1975McDiarmid,1977Lovasz,1978Nguyen}.
\begin{theorem}[{\cite[Proposition
1.3.9]{1978Nguyen}}]\label{thrm:all-polymatroids-are-foldings-of-matroids}
    Let $U$ be a finite set, let $(U,g)$ be an integer polymatroid. Let $\fbr{T_u}_{u\in U}$ be a family of pairwise disjoint sets such that, for each $u$, $\lbr{T_u}=g(\fbr{u})$. Let $V=\cup_{u\in U} T_u$. Define $f:U\to 2^V$ by $f(u)=T_u$. Define a function $\rk_{M_g}:2^V\to\Nz$ by
    \[\forall X\seq V\quad \rk_{M_g}(X) = \min_{A\seq U}\left(g(A) + \lbr{X\setminus f(A)}\right),\]
    where $f(A)=\cup_{u\in A}f(u)$.

    Then $M_g=(V,\rk_{M_g})$ is a matroid, called a \emph{free expansion} of $g$ by $f$, or a \emph{natural matroid} of $g$. Moreover,
    \[\forall A\seq U\quad g(A)=\rk_{M_g}(f(A)).\]
\end{theorem}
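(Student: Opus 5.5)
We verify the matroid axioms for $\rk_{M_g}$ directly from the min-formula, and then check the identity $g(A)=\rk_{M_g}(f(A))$. Throughout, the sets $T_u$ are disjoint, so for $A\seq U$ we have $f(A)=\bigcup_{u\in A}T_u$, and $|X\setminus f(A)|$ is a modular function of $X$.

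\textbf{Integrality, bounds, monotonicity.} Each term $g(A)+|X\setminus f(A)|$ is a nonnegative integer, since $g$ is an integer polymatroid with $g\ge g(\emptyset)=0$. Taking $A=\emptyset$ gives $\rk_{M_g}(X)\le|X|$. For $X\seq Y$, each term for $X$ is at most the corresponding term for $Y$, so $\rk_{M_g}(X)\le\rk_{M_g}(Y)$.

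\textbf{Submodularity.} This is the main step. Fix $X,Y\seq V$, and let $A$ attain the minimum for $X$ and $B$ the minimum for $Y$. We test $A\cup B$ for $X\cup Y$ and $A\cap B$ for $X\cap Y$. Submodularity of $g$ gives
\[
g(A\cup B)+g(A\cap B)\le g(A)+g(B),
\]
so it suffices to show
\[
|(X\cup Y)\setminus f(A\cup B)|+|(X\cap Y)\setminus f(A\cap B)|\le |X\setminus f(A)|+|Y\setminus f(B)|.
\]
Since the $T_u$ are disjoint, $f(A\cup B)=f(A)\cup f(B)$ and $f(A\cap B)=f(A)\cap f(B)$. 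Write $P=f(A)$ and $Q=f(B)$. The inequality then follows by a pointwise count over $v\in V$: compare the indicator $[v\in X\cup Y,\ v\notin P\cup Q]+[v\in X\cap Y,\ v\notin P\cap Q]$ with $[v\in X\setminus P]+[v\in Y\setminus Q]$. Checking the cases shows the left side never exceeds the right. For instance, if $v\in X\cap Y$ and $v\notin P\cup Q$, both sides equal $2$. If $v\in X$ only, the left side is at most $[v\notin P\cup Q]\le[v\notin P]$. Note that disjointness is used only to get $f(A\cap B)=f(A)\cap f(B)$; without it we would only have $\seq$, which goes the wrong way.

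\textbf{Identity.} First, $\rk_{M_g}(f(A))\le g(A)$ by choosing the index set $A$ itself. For the reverse inequality, take any $B\seq U$. Then
\[
|f(A)\setminus f(B)|=\sum_{u\in A\setminus B}|T_u|=\sum_{u\in A\setminus B}g(\{u\}).
\]
Applying submodularity and monotonicity repeatedly gives
\[
g(A)\le g(A\cap B)+\sum_{u\in A\setminus B}g(\{u\})\le g(B)+|f(A)\setminus f(B)|.
\]
Taking the minimum over $B$ yields $g(A)\le\rk_{M_g}(f(A))$. I expect the indicator case check in the submodularity step to be the only delicate point.
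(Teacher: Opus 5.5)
Your proof is correct and complete. The paper does not prove this statement; it quotes it from Nguyen, with pointers to Helgason, McDiarmid and Lovász. So your argument is a self-contained replacement for a citation, not an alternative to an in-paper proof.

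What you give is the standard direct verification. Your formula $\min_{A\subseteq U}(g(A)+|X\setminus f(A)|)$ coincides with the familiar induced-matroid formula $\min_{Y\subseteq X}(h(Y)+|X\setminus Y|)$ for $h(Y)=g(\{u: Y\cap T_u\neq\emptyset\})$. To see this, take $Y=X\cap f(A)$ in one direction and $A=\{u: Y\cap T_u\neq\emptyset\}$ in the other. Testing $A\cup B$ and $A\cap B$ against the two minimizers is the usual way to prove submodularity for such formulas. The citation buys brevity; your version buys self-containment and shows exactly where disjointness of the $T_u$ is needed.

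One correction to your side remark. Disjointness is used \emph{only} for $f(A\cap B)=f(A)\cap f(B)$ within the submodularity step, but not overall. The identity $g(A)=\rk_{M_g}(f(A))$ also needs it, through $|f(A)\setminus f(B)|=\sum_{u\in A\setminus B}g(\{u\})$, and it can fail without it. For example, take $T_1=T_2=\{a\}$ with $g(\{1\})=g(\{2\})=1$ and $g(\{1,2\})=2$.

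Your pointwise case check closes in one line. If $v$ lies in exactly one of $X,Y$, the inequality reads $[v\notin P\cup Q]\le[v\notin P]$ (or $\le[v\notin Q]$). If $v\in X\cap Y$, it is the identity $[v\notin P\cup Q]+[v\notin P\cap Q]=[v\notin P]+[v\notin Q]$.
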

For the properties of the natural matroid of an integer polymatroid, see~\cite{2023Bonin}.

\subsection{Almost entropic polymatroids}\label{subsec:almost-entropic-polymatroids}

By the Shannon-type information inequalities~\cite{1978Fujishige}, every almost entropic vector is a polymatroid.
\begin{proposition}[\cite{1978Fujishige}]\label{prop:almost-entropic-vector-is-polymatroid}
    Let $U$ be a finite set and let $g\in\clGent{U}$. Then $(U,g)$ is a polymatroid.
\end{proposition}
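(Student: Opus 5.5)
The plan is first to verify the three polymatroid axioms for genuine entropic vectors, and then to pass to the closure $\clGent{U}$ by continuity. Every axiom is a non-strict linear inequality or equality in finitely many coordinates of $g$. Hence the set of $g\in\R^{2^U}$ satisfying \eqref{eq:polymatroid-zero-on-empty}, \eqref{eq:polymatroid-monotone} and \eqref{eq:polymatroid-submodular} is closed, being a finite intersection of closed half-spaces and one hyperplane. It therefore suffices to show $\Gent{U}$ lies in this set; the closure $\clGent{U}$ then lies in it as well.

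So fix finite-support random variables $(\xi_u)_{u\in U}$ and set $g(A)=\Ent(\xi_A)$, writing $\xi_A=(\xi_u)_{u\in A}$. The three axioms follow from the standard identities:
\begin{itemize}
    \item[(i)] \emph{Normalization.} $\xi_\emptyset$ is a constant (the empty tuple), so $g(\emptyset)=0$.
    \item[(ii)] \emph{Monotonicity.} For $A\seq B$, the chain rule gives $\Ent(\xi_B)=\Ent(\xi_A)+\Ent(\xi_{B\setminus A}\mid \xi_A)$. Conditional entropy is a convex combination of entropies of conditional distributions, and each of those is $\ge 0$ because $-p\log_2 p\ge0$ for $p\in[0,1]$. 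Hence $g(A)\le g(B)$.
    \item[(iii)] \emph{Submodularity.} Put $C=A\cap B$, $X=\xi_{A\setminus B}$, $Y=\xi_{B\setminus A}$ and $Z=\xi_C$. Then
    \[
        g(A)+g(B)-g(A\cup B)-g(A\cap B)=\Ent(X,Z)+\Ent(Y,Z)-\Ent(X,Y,Z)-\Ent(Z)=I(X;Y\mid Z).
    \]
\end{itemize}

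The only substantive step is showing $I(X;Y\mid Z)\ge0$. I would write it as a $Z$-weighted average of relative entropies,
\[
    D\bigl(P_{XY\mid Z=z}\,\big\|\,P_{X\mid Z=z}P_{Y\mid Z=z}\bigr),
\]
and prove each term nonnegative. This is Gibbs' inequality, which follows from Jensen's inequality applied to the concave function $\log_2$, or from the bound $\ln t\le t-1$. Finite support ensures all sums are finite and that terms with zero probability can be dropped. With this in hand, $\Gent{U}$ lies in the closed polymatroid cone, and so does its closure $\clGent{U}$.
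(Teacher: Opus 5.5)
Your proof is correct and is essentially the paper's approach. The paper gives no proof beyond citing the Shannon-type inequalities of Fujishige: nonnegativity of conditional entropy and of conditional mutual information for entropic vectors, with the passage to $\clGent{U}$ left implicit because the polymatroid axioms are closed linear conditions. Your write-up simply makes both of these steps explicit.
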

Polymatroids arising from almost entropic vectors are called \emph{almost entropic} polymatroids.

One rich family of almost entropic polymatroids is given by linear matroids.
\begin{proposition}[\cite{1992Matus,2020Studeny}]\label{prop:linear-matroids-are-almost-entropic}
    Let $N$ be a linear matroid. Then $N$ is almost entropic.
\end{proposition}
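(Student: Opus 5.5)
We prove that every linear matroid is almost entropic by realizing its rank function, up to an arbitrary scaling, as an entropy vector built from uniformly random linear forms. Let $N$ be a linear matroid on $U$ represented over a finite field $\F_q$ by vectors $(v_u)_{u\in U}$ in $\F_q^r$. If the representation is over an infinite field, we first reduce to the finite case; this is discussed at the end. Then $\rk_N(A)=\dim\operatorname{span}\fbrc{v_a}{a\in A}$. Choose $Z$ uniformly at random in $\F_q^r$ and put $\xi_u=\langle v_u,Z\rangle\in\F_q$.

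The key computation is that, for each $A\seq U$, the map $Z\mapsto(\langle v_a,Z\rangle)_{a\in A}$ is linear with rank $\rk_N(A)$. Its image is therefore a subspace of size $q^{\rk_N(A)}$. Each fiber of this map is a coset of the kernel, so all fibers have the same size, and the image of the uniform $Z$ is uniform on the image. Hence
\[
\Ent((\xi_a)_{a\in A})=\rk_N(A)\log_2 q .
\]
This shows that $(\log_2 q)\,\rk_N$ is an entropic vector.

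We then scale by $1/\log_2 q$. Since $(\log_2 q)\rk_N\in\Gent{U}\seq\clGent{U}$ and $\clGent{U}$ is a convex cone by Proposition~\ref{prop:almost-entropic-vectors-are-cone}, the scalar multiple $\rk_N$ also lies in $\clGent{U}$. Note that $1/\log_2 q$ need not be an integer, so Zhang's cone property is genuinely needed here rather than taking independent copies. By definition of the closure, membership in $\clGent{U}$ means: for every $\epsilon>0$ there is a finite-support random vector whose entropy vector is $\epsilon$-close to $\rk_N$ in every coordinate. That is exactly the definition of $N$ being almost entropic.

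The main obstacle is linear matroids over infinite fields, where the uniform-random-vector construction does not make sense directly. We would handle this by reduction. Over characteristic $0$ or an infinite field of characteristic $p$, the finitely many minors of the representing matrix that are nonzero generate a finitely generated ring. Some maximal ideal of that ring avoids the product of these nonzero minors, and its residue field is finite. Specializing the matrix there preserves the vanishing and nonvanishing of all relevant minors, hence preserves every rank $\rk_N(A)$. This is a standard Nullstellensatz-type argument. Alternatively, one can cite that it suffices to treat finite fields as in~\cite{1992Matus}. After this reduction the argument above applies.
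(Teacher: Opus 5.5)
Your proof is correct and is essentially the paper's: a uniformly random linear form, evaluated at the points of the matroid, gives $\Ent(\xi_A)=\rk_N(A)\log_2 q$, and the cone property (Proposition~\ref{prop:almost-entropic-vectors-are-cone}) removes the factor $\log_2 q$. The differences are cosmetic: the paper cites Rado's theorem for the reduction to finite fields, which your specialization argument reproves, and it evaluates the form on all of $\F_q^d$ before passing to a restriction, whereas you evaluate it directly at the representing vectors $v_u$.
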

\begin{proof}
    Since every linear matroid is representable over a finite field~\cite{1957Rado}, and restrictions of almost entropic matroids are almost entropic, it suffices to prove the proposition when $N$ is the linear matroid formed by all vectors in $\F_q^d$, for some integer $d$ and prime power $q$.

    Let $\xi$ be a uniformly random linear map $\F_q^d \to \F_q$, viewed as a $q^d$-dimensional random vector. Then, for every $A$, $H(\xi_A) = \log_2(\lbr{\supp(\xi_A)}) = \log_2(q^{\rk_N(A)}) = \rk_N(A)\cdot \log_2(q)$.
    Hence, by Proposition~\ref{prop:almost-entropic-vectors-are-cone}, $\rk_N$ is almost entropic.
\end{proof}

For an integer polymatroid $(U,g)$, it is easy to check that if its natural matroid $M_g$ is almost entropic, then $g$ is an almost entropic polymatroid. The converse is a deep fact~\cite[Theorem 4]{2007MatusApproximation}.
\begin{theorem}[\cite{2007MatusApproximation}]\label{thrm:natural-matroid-is-almost-entropic}
    Let $(U,g)$ be an integer almost entropic polymatroid. Then its natural matroid $M_g$ is an almost entropic matroid.
\end{theorem}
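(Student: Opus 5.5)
The plan is to build $M_g$ from $g$ by a sequence of elementary \emph{free one-element extensions}, and to show that each such extension preserves almost entropicity. Concretely, I would fix an order on $V=\bigcup_u T_u$ and, for each $u\in U$, add the elements of $T_u=\{e_1,\dots,e_t\}$ (with $t=g(\{u\})$) one at a time. Each $e_j$ is added as a rank-one element placed \emph{freely} in the closure of $u$ (the principal extension on the flat spanned by $u$). After all of $T_u$ has been added, I would delete $u$. Deletions (restrictions) clearly preserve almost entropicity, since they correspond to forgetting coordinates of the random vector.

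The first, combinatorial step is to check that the final polymatroid on $V$ is exactly $\rk_{M_g}$ from Theorem~\ref{thrm:all-polymatroids-are-foldings-of-matroids}. For a single free extension of $h$ by $e$ in the flat of $u$, the rank function is
\[
h'(X\cup\{e\})=\min\bigl(h(X)+1,\ h(X\cup\{u\})\bigr).
\]
Iterating this and minimizing over which original elements are replaced, I expect an induction on the number of added elements to reproduce $\min_{A\seq U}\bigl(g(A)+\lbr{X\setminus f(A)}\bigr)$. Here one uses $\lbr{T_u}=g(\{u\})$ to see that $f(u)$ spans $u$, so that $u$ can be deleted at the end without changing any rank. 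This step is routine bookkeeping with submodularity.

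The analytic step is to show that if $h$ is almost entropic on $W$ and $u\in W$, then the free extension $h'$ above is almost entropic. Using the cone property (Proposition~\ref{prop:almost-entropic-vectors-are-cone}) and closedness, it suffices to handle $h$ entropic, realized by $\xi$, up to scaling. Take $N$ i.i.d.\ copies $\xi^N$ and let $\zeta=\phi(\xi_u^N)$, where $\phi$ is a uniformly random hash into $\{0,1\}^{\lfloor cN\rfloor}$ and the scale is normalized so that one unit of $h$ corresponds to $cN$ bits. By random binning (Slepian--Wolf type typicality estimates), I would show that for every $A\seq W$,
\[
\tfrac1N \Ent\bigl(\xi_A^N,\zeta\bigr)=\min\bigl(\Ent(\xi_A)+c,\ \Ent(\xi_{A\cup\{u\}})\bigr)+o(1).
\]
The two terms have clear meanings: when $\xi_A^N$ leaves enough residual uncertainty about $\xi_u^N$, the hash contributes its full $cN$ bits; otherwise $\zeta$ is essentially determined. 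Choosing the realization of $\phi$ (one good hash exists by averaging) and dividing by $N$ gives an entropic vector within $o(1)$ of a scaled copy of $h'$. Letting $N\to\infty$ and passing to the limit shows that $h'$ is almost entropic.

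The main obstacle is this hashing estimate, which must hold uniformly for all $2^{|W|}$ sets $A$ simultaneously with error $o(N)$. The upper bound is trivial. For the lower bound, I would restrict to the jointly typical set. There I would use a leftover-hash / collision-probability argument conditioned on $\xi_A^N$: conditional on a typical value of $\xi_A^N$, $\xi_u^N$ is nearly uniform on about $2^{N\Ent(\xi_u\mid\xi_A)}$ values. A random hash is then nearly injective or nearly surjective on that set, and a union bound over the finitely many $A$ handles simultaneity. The atypical part contributes only $o(N)$ to entropy, since its probability tends to zero and all alphabets have size exponential in $N$.
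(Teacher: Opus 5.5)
Your proof is correct, but it takes a genuinely different route from the paper's Appendix~A.

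\textbf{Your route.} You factor the free expansion into $\lbr{V}$ single-element principal extensions $h'(X\cup\{e\})=\min\bigl(h(X)+1,h(X\cup\{u\})\bigr)$, followed by restriction to $V$. You then prove a stand-alone lemma that such an extension preserves almost entropicity. The tool is random binning of i.i.d.\ blocks: conditional typicality plus a collision-entropy bound, with a good hash chosen by averaging the nonnegative gaps over all $A$. The bookkeeping works. By induction, the extended function on $Y\cup X$ is $\min_{A\seq U}\bigl(g(Y\cup A)+\lbr{X\setminus f(A)}\bigr)$, and $Y=\emptyset$ gives $\rk_{M_g}$. Your remark that $f(u)$ spans $u$ is unnecessary here, since restriction never changes ranks of subsets of $V$.

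\textbf{The paper's route.} The paper compresses everything in one shot. It takes a quasi-uniform code approximating $g/K$ with $m=n^{1/K}$ an integer, and hashes each coordinate by an independent uniformly random map $[n]\to[m]^{T_u}$. The equal-fibre property of quasi-uniform codes gives an elementary moment bound on collisions. This bound is needed only for the tight pairs $C\seq f(A)$ with $\rk_N(C)=g(A)$. The paper then prunes to an exact $\Flats(N)$-system of size $m^{\rk(N)-o(1)}$. It never computes the full entropy profile of the compressed vector: compactness and Proposition~\ref{prop:matroid-rank-function-has-unique-solution} force the limiting compatible $1$-polymatroid to be $\rk_N$.

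\textbf{What each buys.} Your route yields a more general and reusable fact: almost entropic polymatroids are closed under principal extensions of any weight, with the entropy of every set controlled. The price is conditional-typicality and leftover-hash estimates that must hold uniformly in $A$, plus an $\lbr{V}$-step induction with re-approximation at each step. The paper's route is shorter given machinery it already has (quasi-uniform approximation, compatibility, and the rigidity of matroid rank functions). However, it is tailored to the integer/matroid case and does not by itself give your principal-extension lemma.
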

This theorem allows us to pass from integer almost entropic polymatroids to almost entropic matroids in Section~\ref{sec:upper-bound-ae-matroids}. We include in Appendix~\ref{appendix:natural-matroid-is-almost-entropic} a streamlined adaptation of the proof from~\cite{2007MatusApproximation}, recast in terms of quasi-uniform random variables. Our approach makes the underlying construction more transparent and replaces the more involved asymptotic estimates in the original proof with simpler ones.

\subsection{Posets}

\begin{definition}
    A poset is a pair $(P,\le)$ consisting of a set $P$ and a relation $\le$ on $P$ such that
    \begin{enumerate}
        \item $\forall a\in P,\ a\le a$;
        \item $\forall a,b\in P,\ \text{if } a\le b \text{ and } b\le a\text{, then }a=b$;
        \item $\forall a,b,c\in P,\ \text{if } a\le b \text{ and } b\le c\text{, then }a\le c$.
    \end{enumerate}
\end{definition}

Every intersection-closed family $\mc{M}\seq 2^{U}$ with $U\in\mc{M}$ induces a poset. The following proposition is straightforward to check.
\begin{proposition}\label{prop:closure-system-is-poset}
    Let $U$ be a finite set, and let $\mc{M}\seq 2^{U}$ be an intersection-closed family with $U\in\mc{M}$.
    Then $(\mc{M},\seq)$ is a poset with minimum element $\cap_{A\in \mc{M}}A$ and maximum element $U$.
\end{proposition}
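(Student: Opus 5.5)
The statement is Proposition~\ref{prop:closure-system-is-poset}. The plan is to verify directly that $(\mc{M},\seq)$ satisfies the three poset axioms and then to identify the extremal elements.

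\textbf{Poset axioms.} Set inclusion is a partial order on all of $2^U$:
\begin{itemize}
    \item reflexivity holds because $A\seq A$;
    \item antisymmetry holds because $A\seq B$ and $B\seq A$ give $A=B$;
    \item transitivity holds because $A\seq B\seq C$ gives $A\seq C$.
\end{itemize}
Restricting a partial order to a subset $\mc{M}\seq 2^U$ preserves all three properties. Hence $(\mc{M},\seq)$ is a poset.

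\textbf{Maximum element.} By hypothesis $U\in\mc{M}$. Every $A\in\mc{M}$ is a subset of $U$, so $A\seq U$, and $U$ is the maximum.

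\textbf{Minimum element.} Let $B=\bigcap_{A\in\mc{M}}A$. This intersection is over a nonempty family, since $U\in\mc{M}$. The family is also finite, because $\mc{M}\seq 2^U$ with $U$ finite. Write $\mc{M}=\{A_1,\dots,A_m\}$. Intersection-closedness applied $m-1$ times, by induction on $j$, shows that $A_1\cap\dots\cap A_j\in\mc{M}$ for every $j\le m$. Taking $j=m$ gives $B\in\mc{M}$. By construction $B\seq A$ for every $A\in\mc{M}$, so $B$ is the minimum.

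The only point needing care is that intersection-closedness is stated for pairs. Concluding that $B\in\mc{M}$ therefore requires finiteness and nonemptiness of $\mc{M}$, both of which follow from $U$ being finite and $U\in\mc{M}$. There is no real obstacle beyond this.
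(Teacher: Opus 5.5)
Your proof is correct and is the routine verification the paper has in mind; the paper gives no proof and calls the proposition ``straightforward to check.'' You also correctly note that $\bigcap_{A\in\mc{M}}A\in\mc{M}$ follows from pairwise intersection-closedness by induction, since $\mc{M}$ is finite and nonempty ($U\in\mc{M}$).
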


On a finite poset $P$, one can define the M\"obius function~\cite{2011Stanley}.
\begin{definition}
Let $(P,\le)$ be a finite poset. Define the M\"obius function $\mu:P\times P\to \Z$ for every $x,y \in P$ such that $x\le y$ by setting
\[\mu(x,y)=\begin{cases}1,&\text{if } x=y;\\ -\sum_{x\le z < y}\mu(x,z), &\text{if } x < y.\end{cases}\]
\end{definition}

The following proposition is easy to check by induction~\cite[Proposition 3.8.5]{2011Stanley}.
\begin{proposition}[\cite{2011Stanley}]\label{prop:formula-for-mobius-function}
    Let $(P,\le)$ be a finite poset. Then, for every $x,y\in P$ such that $x\le y$, one has
    \[\mu(x,y)=\sum_{t\ge 0}(-1)^t c_t(x,y),\]
    where $c_t(x,y)$ is the number of strict chains from $x$ to $y$ of length $t$, where a strict chain of length $t$ is a sequence $\fbr{a_i}_{i=0}^{t}$ such that $x=a_0 < a_1 < \ldots < a_t = y$.
\end{proposition}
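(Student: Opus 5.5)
We prove the chain formula $\mu(x,y)=\sum_{t\ge 0}(-1)^t c_t(x,y)$ by strong induction on the size of the interval $[x,y]=\fbrc{z\in P}{x\le z\le y}$, which is finite because $P$ is finite. Equivalently, we may induct on the length of the longest strict chain from $x$ to $y$. Since $x$ is fixed throughout, we compare the right-hand side $S(x,y)=\sum_{t\ge0}(-1)^tc_t(x,y)$ with the defining recursion of $\mu$. We show that $S$ satisfies the same recursion and has the same initial value.

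For the base case $x=y$, the only strict chain from $x$ to $x$ is the trivial one $a_0=x$ of length $0$. A chain of positive length would require $x<\dots<x$, which is impossible. Hence $c_0(x,x)=1$, $c_t(x,x)=0$ for $t\ge1$, and $S(x,x)=1=\mu(x,x)$.

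For $x<y$, we classify strict chains $x=a_0<\dots<a_t=y$ by their penultimate element $z=a_{t-1}$. Here $t\ge1$, because $x\ne y$. Then $z$ ranges over $x\le z<y$, and deleting the last element $y$ gives a bijection between chains of length $t$ from $x$ to $y$ with penultimate element $z$ and chains of length $t-1$ from $x$ to $z$. Conversely, any strict chain ending at $z<y$ extends uniquely by appending $y$. This gives
\[c_t(x,y)=\sum_{x\le z<y}c_{t-1}(x,z)\qquad(t\ge1),\qquad c_0(x,y)=0.\]
Multiplying by $(-1)^t$ and summing over $t$ (all sums are finite), we obtain
\[S(x,y)=-\sum_{x\le z<y}S(x,z).\]
Each $z$ with $x\le z<y$ has a strictly smaller interval $[x,z]\subsetneq[x,y]$, since it omits $y$. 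So the induction hypothesis gives $S(x,z)=\mu(x,z)$, and therefore $S(x,y)=-\sum_{x\le z<y}\mu(x,z)=\mu(x,y)$.

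The only point needing care is the bijection in the inductive step: we must check that removing or appending the top element preserves strictness. This follows from $z<y$ and transitivity. We must also handle the length-$0$ term correctly, noting that $c_0(x,y)=0$ for $x\neq y$. Beyond that, the argument is routine.
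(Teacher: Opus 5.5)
Your proof is correct and follows the same approach as the paper, which simply says the formula is easy to check by induction and cites Stanley. Your strong induction on the size of $[x,y]$, using the penultimate-element decomposition $c_t(x,y)=\sum_{x\le z<y}c_{t-1}(x,z)$ to match the defining recursion of $\mu$, is exactly that check carried out in full.
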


\begin{corollary}\label{corollary:bound-on-mobius-function}
    Let $(P,\le)$ be a finite non-empty poset. Then, for every $x,y\in P$ such that $x\le y$,
    \[|\mu(x,y)|\le 2^{\lbr{P}-1}.\]
\end{corollary}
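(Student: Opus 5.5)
The plan is to combine the chain formula of Proposition~\ref{prop:formula-for-mobius-function} with a crude count of strict chains. By Proposition~\ref{prop:formula-for-mobius-function} and the triangle inequality,
\[
    |\mu(x,y)|\le \sum_{t\ge0} c_t(x,y),
\]
so it suffices to show that the total number of strict chains from $x$ to $y$, over all lengths, is at most $2^{\lbr{P}-1}$.

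To bound this total, I will encode each strict chain by the set of its elements. A strict chain $x=a_0<a_1<\dots<a_t=y$ is determined by its underlying set $\{a_0,\dots,a_t\}\seq P$, because the elements of a chain are totally ordered and the chain is recovered by listing them in increasing order. This gives an injection from strict chains into subsets of $P$.

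Every such subset contains $y$. If $x=y$, the only chain is the trivial one with $t=0$, so the total is $1\le 2^{\lbr{P}-1}$; this uses that $P$ is non-empty, so $\lbr{P}\ge1$. If $x<y$, every chain set contains the fixed element $y$, and the remaining elements form an arbitrary subset of $P\setminus\{y\}$. Hence there are at most $2^{\lbr{P}-1}$ chains, which proves the claim.

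One could sharpen this to $2^{\lbr{P}-2}$ when $x\ne y$, since $x$ is also forced into every chain set. That refinement is not needed here, and no step of the argument is a real obstacle.
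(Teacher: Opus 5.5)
Your proof is correct and follows essentially the same route as the paper: you bound $|\mu(x,y)|$ by the total number of strict chains via the chain formula, then inject chains into subsets of $P$ containing a fixed element (the paper fixes $x$, you fix $y$). The separate case $x=y$ is not needed, since the trivial chain $\{x\}$ already contains $y$, but it does no harm.
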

\begin{proof}
    By Proposition~\ref{prop:formula-for-mobius-function},
    \[|\mu(x,y)|\le \sum_{t\ge 0}c_t(x,y).\]
    Since any strict chain from $x$ to $y$ is ordered and contains the element $x$, the total number of such chains is bounded by $2^{\lbr{P}-1}$.
\end{proof}

\begin{theorem}\label{thrm:poset-inverse-for-R}
    Let $(P,\le)$ be a finite non-empty poset with minimum element $\bot\in P$ and maximum element $\top\in P$. Let $m=\lbr{P}-1$. Define the $m\times m$ matrix $R$, whose rows are indexed by $P\setminus\fbr{\bot}$ and whose columns are indexed by $P\setminus\fbr{\top}$, by setting
    \[\forall a\in P\setminus\fbr{\bot}, \forall b\in P\setminus\fbr{\top}\quad R_{a,b}=\begin{cases}1,&\text{if }a\not\le b;\\ 0,&\text{if } a\le b.\end{cases}\]
    Define the $m\times m$ matrix $W$, whose rows are indexed by $P\setminus\fbr{\top}$ and whose columns are indexed by $P\setminus\fbr{\bot}$, by setting
    \[\forall c\in P\setminus\fbr{\top}, \forall a\in P\setminus\fbr{\bot}\quad W_{c,a}=\begin{cases}-\mu(c,a),&\text{if }c\le a;\\ 0,&\text{if } c\not\le a.\end{cases}\]
    Then $W$ is an inverse of $R$; that is, $WR=\operatorname{Id}$.

    Moreover, $\norminf{W}\le m\cdot 2^m$.
\end{theorem}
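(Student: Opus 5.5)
We compute the product $WR$ entrywise and reduce each entry to the defining recursion of the M\"obius function. Fix $c\in P\setminus\fbr{\top}$ and $b\in P\setminus\fbr{\top}$. Then
\[
    (WR)_{c,b}=\sum_{a\in P\setminus\fbr{\bot}} W_{c,a}R_{a,b}
    =-\sum_{\substack{a\ne\bot,\ c\le a\\ a\not\le b}}\mu(c,a).
\]
The plan is to rewrite this as a full sum minus a partial sum. We use the standard identity $\sum_{c\le a\le y}\mu(c,a)=[c=y]$, which is immediate from the definition of $\mu$. Taking $y=\top$ gives
\[
    \sum_{c\le a}\mu(c,a)=[c=\top]=0,
\]
because $c\ne\top$. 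Similarly, $\sum_{c\le a\le b}\mu(c,a)=[c=b]$ when $c\le b$, and this sum is empty (hence $0$) when $c\not\le b$. Subtracting the second sum from the first shows that the sum of $\mu(c,a)$ over all $a\ge c$ with $a\not\le b$ equals $-[c=b]$.

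It remains to handle the restriction $a\ne\bot$. The term $a=\bot$ can occur only if $c=\bot$, and in that case $\bot\le b$ holds. So $a=\bot$ never satisfies $a\not\le b$, and the restriction has no effect on the sum. Hence $(WR)_{c,b}=[c=b]$, that is, $WR=\operatorname{Id}$. Since $R$ and $W$ are square $m\times m$ matrices, $W$ is then also a two-sided inverse of $R$.

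For the norm bound, read $\norminf{W}$ either as the maximum absolute entry or as the maximum absolute row sum; the row-sum version is the stronger one. Each row of $W$ has at most $m$ nonzero entries, one for each $a\in P\setminus\fbr{\bot}$. Each entry is bounded by Corollary~\ref{corollary:bound-on-mobius-function}: $|\mu(c,a)|\le 2^{\lbr{P}-1}=2^m$. Therefore every absolute row sum is at most $m\cdot 2^m$, and the bound also holds for the maximum entry.

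The only delicate point is the bookkeeping of the excluded indices $\bot$ and $\top$: $\top$ is excluded on the column side of $W$, and $\bot$ on the row side of $R$. Everything else follows directly from the M\"obius recursion.
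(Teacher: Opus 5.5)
Your proposal is correct and follows essentially the same route as the paper. The paper likewise expands $(WR)_{c,b}$ as $-\sum_{c\le a,\ a\not\le b}\mu(c,a)$, uses the vanishing of the M\"obius sum over $[c,\top]$ (since $c\ne\top$) to reduce to the sum over $[c,b]$, and bounds each row sum by $m\cdot 2^m$ via the M\"obius bound; your explicit check that the excluded index $a=\bot$ never contributes is a small bookkeeping step that the paper leaves implicit.
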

\begin{proof}
    Fix $c,b\in P\setminus\fbr{\top}$. Then
    \[(WR)_{c,b}=\sum_{\substack{a\in P\\ c\le a,\ a\not\le b}}-\mu(c,a).\]

    From the definition of the M\"obius function, it follows that
    \[\forall x< y\in P\quad \sum_{z\in P,\ x\le z\le y}\mu(x,z)=0.\]
    Since $c < \top$, we have
    \[\sum_{\substack{a\in P\\ c\le a,\ a\not\le b}}\mu(c,a) + \sum_{\substack{a\in P\\ c\le a,\ a\le b}}\mu(c,a) = \sum_{a\in P,\ c\le a\le \top}\mu(c,a) = 0.\]
    Therefore,
    \[(WR)_{c,b}=\sum_{a\in P,\ c\le a\le b}\mu(c,a)=\begin{cases}1,&\text{if }c=b;\\ 0,&\text{otherwise}.\end{cases}\]

    By Corollary~\ref{corollary:bound-on-mobius-function},
    \[\norminf{W}=\max_{c\in P\setminus\fbr{\top}}\sum_{a\in P\setminus\fbr{\bot}}|W_{c,a}| \le m\cdot 2^m.\]
\end{proof}

\section{Closure systems}\label{sec:closure-systems}

\begin{definition}[\cite{2003Caspard}]
    Let $U$ be a finite set. A family $\mc{M}\seq 2^U$ is called a \emph{closure system} on $U$ if $U\in\mc{M}$ and $A\cap B\in\mc{M}$ for every $A,B\in\mc{M}$.
\end{definition}

A closure system naturally induces a closure operator.
\begin{definition}
    Let $\mc{M}$ be a closure system on $U$. For $A\seq U$, define
    \[
        \cl_{\mc{M}}(A)=\bigcap_{\substack{B\in\mc{M}\\A\seq B}}B.
    \]
\end{definition}
Thus $\cl_{\mc{M}}(A)$ is the smallest member of $\mc{M}$ containing $A$.

It is convenient to allow an arbitrary finite set of coordinate labels rather than only $[k]$. Thus, for a finite set $U$, we work with subfamilies of $[n]^U$, the $\lbr{U}$-fold Cartesian product of $[n]$ with coordinates indexed by $U$.

Recall the definition of $\mc{M}$-systems.
\begin{definition}
    Let $\mc{M}$ be a closure system on $U$. A family $\mc{F}\seq [n]^U$ is called an $\mc{M}$-system if, for every $F,F'\in\mc{F}$,
    \[
        \fbrc{u\in U}{\pi_u(F)=\pi_u(F')}\in\mc{M}.
    \]
\end{definition}

The closure operation gives the following convenient characterization of $\mc{M}$-systems.
\begin{proposition}\label{prop:M-system-via-closure}
    Let $\mc{M}$ be a closure system on $U$ and let $\mc{F}\seq[n]^U$. Then $\mc{F}$ is an $\mc{M}$-system if and only if
    \[
        \forall A\seq U\quad \lbr{\pi_A(\mc{F})}=\lbr{\pi_{\cl_{\mc{M}}(A)}(\mc{F})}.
    \]
\end{proposition}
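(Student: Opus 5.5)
We prove the two implications separately, using the elementary observation that for $A\seq B$ the projection map $\pi_B(\mc{F})\to\pi_A(\mc{F})$, $\pi_B(F)\mapsto\pi_A(F)$, is a well-defined surjection. Hence $\lbr{\pi_A(\mc{F})}\le\lbr{\pi_B(\mc{F})}$. Equality holds if and only if this map is a bijection, that is, if and only if any $F,F'\in\mc{F}$ that agree on $A$ also agree on $B$. Since $A\seq\cl_{\mc{M}}(A)$ always, the condition in the proposition is equivalent to the following statement: for every $A\seq U$ and all $F,F'\in\mc{F}$, if $\pi_A(F)=\pi_A(F')$ then $\pi_{\cl_{\mc{M}}(A)}(F)=\pi_{\cl_{\mc{M}}(A)}(F')$.

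\emph{($\Rightarrow$).} Assume $\mc{F}$ is an $\mc{M}$-system and fix $A\seq U$. Take $F,F'\in\mc{F}$ with $\pi_A(F)=\pi_A(F')$, and let $S=\fbrc{u\in U}{\pi_u(F)=\pi_u(F')}$. Then $A\seq S$, and $S\in\mc{M}$ by assumption. Since $\cl_{\mc{M}}(A)$ is the smallest member of $\mc{M}$ containing $A$, we get $\cl_{\mc{M}}(A)\seq S$. Thus $F$ and $F'$ agree on $\cl_{\mc{M}}(A)$. By the reformulation above, $\lbr{\pi_A(\mc{F})}=\lbr{\pi_{\cl_{\mc{M}}(A)}(\mc{F})}$.

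\emph{($\Leftarrow$).} Assume the cardinality condition holds for every $A$. Take $F,F'\in\mc{F}$ and set $S=\fbrc{u\in U}{\pi_u(F)=\pi_u(F')}$. Then $\pi_S(F)=\pi_S(F')$. Applying the reformulated condition with $A=S$, we get that $F$ and $F'$ agree on $\cl_{\mc{M}}(S)$, so $\cl_{\mc{M}}(S)\seq S$ by the definition of $S$. The reverse inclusion $S\seq\cl_{\mc{M}}(S)$ always holds, so $S=\cl_{\mc{M}}(S)$. Since $\cl_{\mc{M}}(S)$ is an intersection of members of $\mc{M}$ (a nonempty intersection, as $U\in\mc{M}$), and $\mc{M}$ is a closure system, $S\in\mc{M}$. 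Hence $\mc{F}$ is an $\mc{M}$-system.

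No step here is a real obstacle. The only point that needs care is the equivalence between equal cardinalities and injectivity of the projection. It relies on finiteness of $\mc{F}$ together with surjectivity of the projection map, and finiteness holds because $\mc{F}\seq[n]^U$. We also use that $\cl_{\mc{M}}$ is well defined because $U\in\mc{M}$ and $\mc{M}$ is closed under intersections.
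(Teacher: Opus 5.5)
Your proof is correct and follows the paper's argument essentially verbatim. The forward direction uses that the agreement set lies in $\mc{M}$ and hence contains $\cl_{\mc{M}}(A)$, and the reverse direction applies the hypothesis to the agreement set itself; the paper argues that step by contradiction, you argue it directly, and your remark that equal cardinality means the surjective projection $\pi_{\cl_{\mc{M}}(A)}(\mc{F})\to\pi_A(\mc{F})$ is injective just makes precise the paper's phrase ``uniquely determines''.
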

\begin{proof}
    Suppose that $\mc{F}$ is an $\mc{M}$-system. Fix $A\seq U$. Take $F,F'\in\mc{F}$ such that $\pi_A(F)=\pi_A(F')$. Since
    \[A\seq \fbrc{u\in U}{\pi_u(F)=\pi_u(F')}\in\mc{M},\]
    we have $\cl_{\mc{M}}(A)\seq \fbrc{u\in U}{\pi_u(F)=\pi_u(F')}$ and so $\pi_{\cl_{\mc{M}}(A)}(F)=\pi_{\cl_{\mc{M}}(A)}(F')$. Thus $\pi_A(F)$ uniquely determines $\pi_{\cl_{\mc{M}}(A)}(F)$. Therefore $\lbr{\pi_A(\mc{F})}=\lbr{\pi_{\cl_{\mc{M}}(A)}(\mc{F})}$.

    Conversely, suppose that the projection equality holds for every $A\seq U$. Let $F,F'\in\mc{F}$ and put $A=\fbrc{u\in U}{\pi_u(F)=\pi_u(F')}$. If $A\notin\mc{M}$, then $A\subsetneq\cl_{\mc{M}}(A)$. Since $\pi_A(F)=\pi_A(F')$ and $\pi_{\cl_{\mc{M}}(A)}(F)\ne\pi_{\cl_{\mc{M}}(A)}(F')$, $\pi_A(F)$ does not uniquely determine $\pi_{\cl_{\mc{M}}(A)}(F)$, contradicting $\lbr{\pi_A(\mc{F})}=\lbr{\pi_{\cl_{\mc{M}}(A)}(\mc{F})}$. Hence $A\in\mc{M}$.
\end{proof}

We need the following construction, introduced by R\"odl and Tengan~\cite{2006Rodl}.
\begin{proposition}\label{prop:product-code-mM}
   Let $\mc{M}$ be a closure system on $U$. Then, for all integers $a \ge 1$ and $b \ge 1$,
    \[
        m(a \cdot b,\mc{M}) \ge m(a,\mc{M}) \cdot m(b,\mc{M}).
    \]
\end{proposition}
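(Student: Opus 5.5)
The plan is a direct product construction. Fix an $\mc{M}$-system $\mc{A}\seq[a]^U$ of size $m(a,\mc{M})$ and an $\mc{M}$-system $\mc{B}\seq[b]^U$ of size $m(b,\mc{M})$. Fix a bijection $\phi:[a]\times[b]\to[ab]$, for instance $\phi(x,y)=(x-1)b+y$. For $F\in\mc{A}$ and $G\in\mc{B}$, define $F\otimes G\in[ab]^U$ coordinatewise by
\[
    \pi_u(F\otimes G)=\phi\bigl(\pi_u(F),\pi_u(G)\bigr)\qquad(u\in U),
\]
and set $\mc{C}=\fbrc{F\otimes G}{F\in\mc{A},\ G\in\mc{B}}$.

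The first step is to show that the map $(F,G)\mapsto F\otimes G$ is injective. Since $\phi$ is a bijection, each coordinate of $F\otimes G$ determines both $\pi_u(F)$ and $\pi_u(G)$. Hence $F\otimes G$ determines $F$ and $G$, and so $\lbr{\mc{C}}=\lbr{\mc{A}}\cdot\lbr{\mc{B}}$.

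The second step is to check the $\mc{M}$-system condition. Take $F\otimes G$ and $F'\otimes G'$ in $\mc{C}$. Again by bijectivity of $\phi$,
\[
    \fbrc{u\in U}{\pi_u(F\otimes G)=\pi_u(F'\otimes G')}
    =\fbrc{u}{\pi_u(F)=\pi_u(F')}\cap\fbrc{u}{\pi_u(G)=\pi_u(G')}.
\]
The definition of an $\mc{M}$-system ranges over all pairs, including equal ones, where the agreement set is $U$. So both sets on the right lie in $\mc{M}$ in every case. Since $\mc{M}$ is intersection-closed, their intersection also lies in $\mc{M}$. Therefore $\mc{C}$ is an $\mc{M}$-system in $[ab]^U$, which gives $m(ab,\mc{M})\ge\lbr{\mc{C}}=m(a,\mc{M})\,m(b,\mc{M})$.

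There is no real obstacle here. The only point needing care is the case of equal pairs: if $F=F'$ but $G\ne G'$, the agreement set for the $\mc{A}$-part is $U$. This set is in $\mc{M}$ because $U\in\mc{M}$ by the definition of a closure system.
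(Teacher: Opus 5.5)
Your proposal is correct and follows the paper's proof. Both take the coordinatewise product of two extremal $\mc{M}$-systems and observe that the agreement set is the intersection of the two agreement sets, hence lies in $\mc{M}$; your explicit bijection $\phi$, the injectivity check, and the remark on equal pairs simply spell out the identification of $[a]\times[b]$ with $[ab]$ and the count $\lbr{\mc{C}}=\lbr{\mc{A}}\cdot\lbr{\mc{B}}$, which the paper leaves implicit.
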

\begin{proof}
    For completeness, we include the proof. Let $\mc{F}_1 \subseteq [a]^U$, $\mc{F}_2 \subseteq [b]^U$ be $\mc{M}$-systems such that $\lbr{\mc{F}_1} = m(a,\mc{M})$ and $\lbr{\mc{F}_2} = m(b,\mc{M})$.

    Define $\mc{G} \subseteq ([a] \times [b])^U$ as follows:
    \[
        \mc{G} = \fbrc{\fbrc{(\pi_u(F_1), \pi_u(F_2))}{u \in U}}{F_1 \in \mc{F}_1,\ F_2 \in \mc{F}_2}.
    \]
    For any $G,G'\in\mc{G}$, with $G=(F_1,F_2)$ and $G'=(F_1',F_2')$, we have
    \[
        \fbrc{u}{\pi_u(G) = \pi_u(G')} = \fbrc{u}{\pi_u(F_1) = \pi_u(F_1')} \cap \fbrc{u}{\pi_u(F_2) = \pi_u(F_2')},
    \]
    which belongs to $\mc{M}$ since it is closed under intersections. Therefore, $\mc{G}$ is an $\mc{M}$-system and
    \[
        m(a \cdot b, \mc{M}) \ge \lbr{\mc{G}} = \lbr{\mc{F}_1} \cdot \lbr{\mc{F}_2} = m(a,\mc{M}) \cdot m(b,\mc{M}).
    \]
\end{proof}

This product construction implies that the limit $\lim_{n\to\infty}\log_n m(n,\mc{M})$ exists.
\begin{theorem}\label{thrm:alpha-for-mM-exists}
Let $\mc{M}$ be a closure system on $U$. Then the following limit exists:
\[
\alpha(\mc{M}) = \lim_{n \to \infty} \log_n m(n,\mc{M}).
\]
Moreover,
\[
\forall n \ge 1 \quad m(n,\mc{M}) \le n^{\alpha(\mc{M})}.
\]
\end{theorem}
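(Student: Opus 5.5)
We use supermultiplicativity (Proposition~\ref{prop:product-code-mM}) together with monotonicity in $n$, a Fekete-type argument. Put $f(n)=\log m(n,\mc{M})$. Then $f(ab)\ge f(a)+f(b)$, and $f$ is nondecreasing, since an $\mc{M}$-system in $[n]^U$ is also one in $[n+1]^U$. Also $1\le m(n,\mc{M})\le n^{\lbr{U}}$: a single vector is an $\mc{M}$-system because $U\in\mc{M}$. Hence $0\le f(n)/\log n\le\lbr{U}$ for $n\ge2$. Set $\alpha=\sup_{n\ge2} f(n)/\log n$, which is finite. The second claim of the theorem, $m(n,\mc{M})\le n^{\alpha}$, then holds trivially for $n\ge2$ once we know the limit equals $\alpha$; for $n=1$ both sides equal $1$.

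It remains to show $\liminf_{n} f(n)/\log n\ge\alpha$. Fix $\epsilon>0$ and choose $a\ge2$ with $f(a)/\log a\ge\alpha-\epsilon$. Iterating supermultiplicativity gives $f(a^t)\ge t f(a)$. For arbitrary $n\ge a$, let $t=\floor{\log_a n}$, so $a^t\le n<a^{t+1}$. By monotonicity,
\[
f(n)\ge f(a^t)\ge t f(a)\ge t(\alpha-\epsilon)\log a.
\]
Since $\log n<(t+1)\log a$, this yields
\[
\frac{f(n)}{\log n}\ge\frac{t}{t+1}(\alpha-\epsilon).
\]
As $n\to\infty$ we have $t\to\infty$, so the $\liminf$ is at least $\alpha-\epsilon$. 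Because $\epsilon$ was arbitrary and $\limsup\le\sup=\alpha$, the limit exists and equals $\alpha$.

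The only delicate point is that supermultiplicativity controls only products, so we cannot apply Fekete's lemma to $f(n)$ directly. We handle this by interpolating between consecutive powers of $a$ through monotonicity, and the loss factor $t/(t+1)$ vanishes in the limit. Note also that $\alpha$ is a supremum rather than merely a limit, and this is exactly what gives the non-asymptotic bound $m(n,\mc{M})\le n^{\alpha(\mc{M})}$ for every $n$.
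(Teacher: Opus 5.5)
Your proof is correct and follows the paper's route. Both use supermultiplicativity (Proposition~\ref{prop:product-code-mM}), sandwich $n$ between $a^t$ and $a^{t+1}$ with loss factor $t/(t+1)$, and identify $\alpha(\mc{M})$ with $\sup_{a\ge2}\log_a m(a,\mc{M})$ to get the bound $m(n,\mc{M})\le n^{\alpha(\mc{M})}$ for every $n$; you also spell out the monotonicity in $n$ and the finiteness of the supremum, which the paper uses only implicitly.
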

\begin{proof}
    It suffices to show that, for all $a \ge 2$, the following inequality holds:
    \begin{equation}\label{eq:lim-inf-asympt-m}
        \liminf_{n \to \infty} \log_n m(n,\mc{M}) \ge \log_a m(a,\mc{M}).
    \end{equation}

    Fix $a \ge 2$ and $n \ge 2$. Choose $N$ such that $a^N \le n < a^{N+1}$. Then, by Proposition~\ref{prop:product-code-mM},
    \[
        m(n,\mc{M}) \ge m(a^N,\mc{M}) \ge m(a,\mc{M})^N.
    \]
    Also, since $\ln(n) \le (N+1)\ln(a)$, we have
    \[
        \log_n m(n,\mc{M}) \ge \frac{N}{N+1} \log_a m(a,\mc{M}).
    \]
    Since $N \to \infty$ as $n \to \infty$, inequality \eqref{eq:lim-inf-asympt-m} follows. Taking the supremum over $a\ge2$ gives $\alpha(\mc{M})=\sup_{a\ge2}\log_a m(a,\mc{M})$, which also proves the second assertion; the case $n=1$ is immediate.
\end{proof}

Combining this theorem with Theorem~\ref{thrm:kL-to-M} gives the existence of the exponent for $(k,L)$-systems.
\begin{corollary}\label{corollary:alpha-kL-exists}
    Let $k\ge 1$ and $L\seq\fbr{0,\ldots,k-1}$. Then the limit
    \[
        \alpha(k,L)=\lim_{n\to\infty}\log_n m(n,k,L)
    \]
    exists. Moreover,
    \[
        \alpha(k,L)=\max\fbrc{\alpha(\mc{M})}{\begin{array}{l}
        \mc{M}\text{ is a closure system on }[k],\\
        \lbr{A}\in L\text{ for every }A\in\mc{M}\setminus\fbr{[k]}
        \end{array}}.
    \]
\end{corollary}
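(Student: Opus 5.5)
The plan is to obtain this corollary directly from Theorem~\ref{thrm:kL-to-M} together with Theorem~\ref{thrm:alpha-for-mM-exists}. The key observation is that the maximum in \eqref{eq:kL-to-M} ranges over a finite set of closure systems on $[k]$, and this set does not depend on $n$. Call this set $\mathcal{S}$; it is nonempty because $\{[k]\}\in\mathcal{S}$. Also, the implied constants in $\Theta_k$ depend only on $k$.

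\emph{Upper bound.} Theorem~\ref{thrm:kL-to-M} gives a constant $C_k$ with
\[
m(n,k,L)\le C_k\max_{\mc{M}\in\mathcal{S}} m(n,\mc{M}).
\]
Theorem~\ref{thrm:alpha-for-mM-exists} gives $m(n,\mc{M})\le n^{\alpha(\mc{M})}$ for every $n$. Hence
\[
m(n,k,L)\le C_k\, n^{\beta}, \qquad \beta:=\max_{\mc{M}\in\mathcal{S}}\alpha(\mc{M}).
\]
Taking $\log_n$ yields $\limsup_{n\to\infty}\log_n m(n,k,L)\le \beta$, since $\log_n C_k\to 0$.

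\emph{Lower bound.} Theorem~\ref{thrm:kL-to-M} also gives $c_k>0$ with
\[
m(n,k,L)\ge c_k\, m(n,\mc{M}^*),
\]
where $\mc{M}^*\in\mathcal{S}$ attains the maximum $\beta$. This uses only that $m(n,k,L)\ge c_k\max_{\mc{M}\in\mathcal{S}}m(n,\mc{M})\ge c_k\,m(n,\mc{M}^*)$. Since $\lim_{n\to\infty}\log_n m(n,\mc{M}^*)=\alpha(\mc{M}^*)=\beta$ by Theorem~\ref{thrm:alpha-for-mM-exists}, we get $\liminf_{n\to\infty}\log_n m(n,k,L)\ge\beta$.

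Combining the two bounds shows that the limit $\alpha(k,L)$ exists and equals $\beta$, which is the claimed maximum over closure systems on $[k]$ whose proper members have sizes in $L$. The maximum is attained because $\mathcal{S}$ is finite.

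The argument involves no real obstacle. The only points needing care are that $\mathcal{S}$ is finite and independent of $n$, so the maximum of finitely many convergent sequences $\log_n m(n,\mc{M})$ behaves well, and that the multiplicative $\Theta_k$ constants vanish after taking $\log_n$.
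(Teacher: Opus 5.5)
Your proposal is correct and is the paper's own argument: the paper obtains this corollary by combining Theorem~\ref{thrm:kL-to-M} with Theorem~\ref{thrm:alpha-for-mM-exists}, and you have written out the details. In particular, you correctly note that the maximum ranges over a finite set of closure systems that does not depend on $n$, and that the $\Theta_k$ constants vanish after taking $\log_n$.
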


\section{Upper bound via entropic vectors}\label{sec:upper-bound-via-entropic-vectors}

To prove Theorem~\ref{thrm:main-intro}, it is convenient first to express $\alpha(\mc{M})$ as an optimization problem over the region of almost entropic vectors. We begin with the following notion of compatibility.

\begin{definition}\label{def:compatibility}
    Let $\mc{M}$ be a closure system on $U$. A function $g:2^U\to \R$ is called \emph{compatible with $\mc{M}$} if
    \[
        \forall A\seq U\quad g(A)=g(\cl_{\mc{M}}(A)).
    \]
\end{definition}

We first obtain the following natural upper bound via entropic vectors.
\begin{theorem}\label{thrm:alpha-le-eta}
    Let $\mc{M}$ be a closure system on $U$. Then
    \[\alpha(\mc{M}) \le \eta(\mc{M}),\]
    where
    \[
    \eta(\mc{M})=\max\fbrc{g(U)}{g\in\clGent{U},\ g(A)\le \lbr{A}\text{ for all }A\seq U,\ g\text{ is compatible with }\mc{M}}.
\]
\end{theorem}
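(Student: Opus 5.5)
Given an $\mc{M}$-system $\mc{F}\seq[n]^U$ of size $m(n,\mc{M})$, I will let $F$ be a uniformly random member of $\mc{F}$ and put $\xi_u=\pi_u(F)$ for $u\in U$. I will then define
\[
g_n(A)=\frac{\Ent((\xi_u)_{u\in A})}{\log_2 n}\qquad(A\seq U).
\]
Since $\Ent(\xi_A)\log_2^{-1}n$ is a positive multiple of an entropic vector, Proposition~\ref{prop:almost-entropic-vectors-are-cone} gives $g_n\in\clGent{U}$. The plan is to check that $g_n$ is feasible for the optimization defining $\eta(\mc{M})$. Then $\eta(\mc{M})\ge g_n(U)=\log_n\lbr{\mc{F}}=\log_n m(n,\mc{M})$. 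Letting $n\to\infty$ and using Theorem~\ref{thrm:alpha-for-mM-exists} will give $\alpha(\mc{M})\le\eta(\mc{M})$.

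Feasibility has three parts.
\begin{enumerate}
\item The bound $g_n(A)\le\lbr{A}$ holds because $\xi_A$ takes values in $[n]^A$, a set of size $n^{\lbr{A}}$, so $\Ent(\xi_A)\le\lbr{A}\log_2 n$.
\item For the total entropy, $\xi_U=F$ is uniform on $\mc{F}$, so $g_n(U)=\log_n\lbr{\mc{F}}$.
\item Compatibility follows from the argument in the proof of Proposition~\ref{prop:M-system-via-closure}. There, $\pi_A(F)$ uniquely determines $\pi_{\cl_{\mc{M}}(A)}(F)$ for every $F\in\mc{F}$. Hence $\xi_{\cl_{\mc{M}}(A)}$ is a function of $\xi_A$, which gives $\Ent(\xi_{\cl_{\mc{M}}(A)})\le\Ent(\xi_A)$. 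Monotonicity of entropy (Proposition~\ref{prop:almost-entropic-vector-is-polymatroid}) gives the reverse inequality, so equality holds.
\end{enumerate}

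The only step needing care is whether the supremum is attained, so that writing $\max$ is legitimate. The feasible set is a subset of the closed cone $\clGent{U}$ cut out by the closed conditions $g(A)\le\lbr{A}$ and $g(A)=g(\cl_{\mc{M}}(A))$. By Proposition~\ref{prop:almost-entropic-vector-is-polymatroid} its elements are polymatroids, so $0\le g(A)\le\lbr{A}$ and the set is bounded. It is therefore compact, and the continuous function $g\mapsto g(U)$ attains its maximum. The set is nonempty since $g=0$ lies in it.

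It is also worth noting that the bound holds for every $n$ individually: $\log_n m(n,\mc{M})\le\eta(\mc{M})$, that is, $m(n,\mc{M})\le n^{\eta(\mc{M})}$. The limit statement then follows immediately. None of these steps is a real obstacle; the argument is essentially the standard entropy-counting argument. The one conceptual point is the normalization by $\log_2 n$, which makes the constraint $g(A)\le\lbr{A}$ scale-free.
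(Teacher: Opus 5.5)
Your proof is correct and follows essentially the same route as the paper: a uniformly random element of an $\mc{M}$-system, with entropies normalized by $\log_2 n$, is a feasible almost entropic vector, using the cone property and Proposition~\ref{prop:M-system-via-closure}. The only cosmetic difference is that you take an extremal system for each $n$ and pass to the limit, which also gives you the pointwise bound $m(n,\mc{M})\le n^{\eta(\mc{M})}$, whereas the paper picks a single near-optimal system with $\log_n\lbr{\mc{F}}\ge\alpha(\mc{M})-\epsilon$.
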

The maximum in $\eta(\mc{M})$ exists because the feasible set is closed and bounded.
\begin{proof}
    The proof assigns a random vector to an $\mc{M}$-system. A similar approach was used by Westerb\"ack, Freij-Hollanti, and Hollanti~\cite{2015Westerback} in coding theory.

    Fix $\epsilon > 0$. Then there exist an integer $n\ge 2$ and an $\mc{M}$-system $\mc{F}\seq [n]^U$ such that
    \begin{equation*}
        \log_n(\lbr{\mc{F}})\ge \alpha(\mc{M}) - \epsilon.
    \end{equation*}

    Take a uniformly distributed random vector $\xi_{U}=(\xi_{u})_{u\in U}$ supported on $\mc{F}$, that is,
    \[
    \forall F\in\mc{F}\quad \P(\xi_U = F) =\frac{1}{\lbr{\mc{F}}}.
    \]

    Define the function $g:2^U\to \R$ by
    \[
    \forall A\seq U\quad g(A) = \frac{\Ent\left(\xi_A\right)}{\log_2(n)}.
    \]

    Since $\lbr{\pi_A(\mc{F})} \le n^{\lbr{A}}$, the random vector $\xi_{A}$ takes at most $n^{\lbr{A}}$ values, so $g(A) \le \lbr{A}$. Since $\clGent{U}$ is a convex cone by Proposition~\ref{prop:almost-entropic-vectors-are-cone}, the function $g$ is an almost entropic vector.

    This vector is compatible with $\mc{M}$. Indeed, by Proposition~\ref{prop:M-system-via-closure}, for every $A\seq U$ the projection $\xi_A$ determines $\xi_{\cl_{\mc{M}}(A)}$. Hence
    \(
        \Ent(\xi_A)=\Ent(\xi_{\cl_{\mc{M}}(A)}),
    \)
    so $g(A)=g(\cl_{\mc{M}}(A))$.

    Since $\xi_U$ is uniformly distributed on its support,
    \[\eta(\mc{M}) \ge g(U) = \log_{n}(\lbr{\mc{F}})\ge \alpha(\mc{M}) - \epsilon.\]

    Since $\epsilon>0$ can be chosen arbitrarily small, the claim follows.
\end{proof}

By Proposition~\ref{prop:almost-entropic-vector-is-polymatroid}, every almost entropic vector is a polymatroid. Thus we obtain the following upper bound.
\begin{corollary}\label{corollary:alpha-le-rho}
    Let $\mc{M}$ be a closure system on $U$. Then
    \[
        \alpha(\mc{M})\le\eta(\mc{M})\le\rho(\mc{M}),
    \]
    where
    \begin{align*}
        \rho(\mc{M})
        =\max\Big\{\ g(U)\ \Big|\ & g:2^{U} \to \R,\\
            \forall A\seq U\quad& 0\le g(A)\le \lbr{A};\\
            \forall A\seq B\seq U\quad& g(A)\le g(B);\\
            \forall A,B\seq U\quad& g(A\cup B) + g(A\cap B) \le g(A) + g(B); \\
        \forall A\seq U\quad & g(A)=g(\cl_{\mc{M}}(A))\Big\}.
    \end{align*}
\end{corollary}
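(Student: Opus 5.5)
The chain $\alpha(\mc{M})\le\eta(\mc{M})\le\rho(\mc{M})$ has two links. The first inequality, $\alpha(\mc{M})\le\eta(\mc{M})$, is exactly Theorem~\ref{thrm:alpha-le-eta}, so it can be quoted directly. It remains to show $\eta(\mc{M})\le\rho(\mc{M})$. The plan is to show that every function feasible for the optimization defining $\eta(\mc{M})$ is also feasible for the linear program defining $\rho(\mc{M})$, with the same objective value $g(U)$. Taking maxima then gives the inequality.

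Fix $g\in\clGent{U}$ with $g(A)\le\lbr{A}$ for all $A\seq U$ and $g$ compatible with $\mc{M}$. We check the constraints of $\rho(\mc{M})$ one by one:
\begin{enumerate}
    \item By Proposition~\ref{prop:almost-entropic-vector-is-polymatroid}, $(U,g)$ is a polymatroid. This gives $g(\emptyset)=0$, monotonicity, and submodularity.
    \item Monotonicity together with $g(\emptyset)=0$ gives $g(A)\ge 0$ for every $A$.
    \item The upper bound $g(A)\le\lbr{A}$ is assumed.
    \item Compatibility $g(A)=g(\cl_{\mc{M}}(A))$ is assumed.
\end{enumerate}
Hence $g$ is feasible for $\rho(\mc{M})$, and $g(U)\le\rho(\mc{M})$.

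We also need the maximum defining $\rho(\mc{M})$ to exist. The feasible region is a polytope, since it is cut out by finitely many linear constraints. It is bounded by $0\le g(A)\le\lbr{A}$ and nonempty, since $g\equiv 0$ satisfies all constraints, compatibility included. So the maximum is attained, and $\rho(\mc{M})$ is a well-defined linear program value.

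No step here is a real obstacle. The only substantive input is that almost entropic vectors are polymatroids, Proposition~\ref{prop:almost-entropic-vector-is-polymatroid}, which holds because the Shannon inequalities are preserved under limits.
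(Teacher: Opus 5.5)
Your proposal is correct and matches the paper's argument: the first inequality is Theorem~\ref{thrm:alpha-le-eta}, and the second holds because Proposition~\ref{prop:almost-entropic-vector-is-polymatroid} makes every feasible point for $\eta(\mc{M})$ feasible for the linear program defining $\rho(\mc{M})$. Your extra checks, nonnegativity from monotonicity with $g(\emptyset)=0$ and attainment of the maximum over a nonempty bounded polytope, agree with the paper's remark immediately after the corollary.
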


The optimization defining $\rho(\mc{M})$ is a finite linear program with integer coefficients and a nonempty bounded feasible set. In particular, $\rho(\mc{M})$ is a computable rational number. Equivalently, the following compact characterization follows directly from the definition of a polymatroid.
\begin{proposition}\label{prop:rho-is-max-polymatroid}
Let $\mc{M}$ be a closure system on $U$. Then
    \[
        \rho(\mc{M})=\max\fbrc{g(U)}{(U,g)\text{ is a $1$-polymatroid compatible with }\mc{M}}.
    \]
\end{proposition}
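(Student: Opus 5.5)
The statement is a direct repackaging of the linear program in Corollary~\ref{corollary:alpha-le-rho}, so the plan is to show that the two feasible sets coincide. Then the two maxima over $g(U)$ are the same number, and no estimate is needed.

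\emph{Feasible set of Corollary~\ref{corollary:alpha-le-rho} inside the set of compatible $1$-polymatroids.} Let $g$ satisfy the constraints of the linear program. Monotonicity \eqref{eq:polymatroid-monotone} and submodularity \eqref{eq:polymatroid-submodular} are listed verbatim. The bound $g(A)\le\lbr{A}$ is exactly \eqref{eq:polymatroid-one-bounded}. The remaining axiom \eqref{eq:polymatroid-zero-on-empty} follows by applying $0\le g(A)\le\lbr{A}$ to $A=\emptyset$, which gives $0\le g(\emptyset)\le 0$. Compatibility with $\mc{M}$ is the last listed constraint. Hence $(U,g)$ is a $1$-polymatroid compatible with $\mc{M}$.

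\emph{Compatible $1$-polymatroids inside the feasible set.} Let $(U,g)$ be a $1$-polymatroid compatible with $\mc{M}$. Monotonicity, submodularity, the upper bound $g(A)\le\lbr{A}$ and compatibility are again immediate. The only constraint not listed among the polymatroid axioms is nonnegativity. It follows from \eqref{eq:polymatroid-zero-on-empty} together with \eqref{eq:polymatroid-monotone} applied to $\emptyset\seq A$, which gives $g(A)\ge g(\emptyset)=0$.

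Since the feasible sets coincide, the two maxima of $g(U)$ coincide. The maximum on the right is attained for the same reason as the one defining $\rho(\mc{M})$: the set is nonempty (it contains $g\equiv 0$), closed, and bounded in $\R^{2^U}$. There is no real obstacle here. The only points requiring attention are the two small derivations above: $g(\emptyset)=0$ from the bound on $\emptyset$, and nonnegativity from monotonicity.
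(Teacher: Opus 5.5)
Your proof is correct and takes the same approach as the paper, which only remarks that the characterization follows directly from the definition of a polymatroid. You have spelled out the two checks that remark leaves implicit: $g(\emptyset)=0$ from $0\le g(\emptyset)\le 0$, and nonnegativity from monotonicity together with $g(\emptyset)=0$.
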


The polymatroid bound is at least as strong as the rank bound from the introduction.
\begin{proposition}\label{prop:rho-le-than-rk}
    Let $\mc{M}$ be a closure system on $U$. Then
    \[
        \rho(\mc{M}) \le \rk(\mc{M}).
    \]
\end{proposition}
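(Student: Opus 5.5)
Let $g$ be any feasible point of the linear program defining $\rho(\mc{M})$; that is, $g$ is a $1$-polymatroid on $U$ compatible with $\mc{M}$ (Proposition~\ref{prop:rho-is-max-polymatroid}). The goal is to show $g(U)\le\rk(\mc{M})$. Fix a set $C\seq U$ of minimum size $\lbr{C}=\rk(\mc{M})$ such that $C\not\seq A$ for every $A\in\mc{M}\setminus\fbr{U}$.

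The first step is to identify the closure of $C$. The set $\cl_{\mc{M}}(C)$ is a member of $\mc{M}$ containing $C$. By the choice of $C$, no member of $\mc{M}$ other than $U$ contains $C$. Hence $\cl_{\mc{M}}(C)=U$.

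The second step uses compatibility of $g$ with $\mc{M}$ (Definition~\ref{def:compatibility}), which gives $g(C)=g(\cl_{\mc{M}}(C))=g(U)$. The $1$-polymatroid bound~\eqref{eq:polymatroid-one-bounded} gives $g(C)\le\lbr{C}$. Combining these,
\[
g(U)=g(C)\le\lbr{C}=\rk(\mc{M}).
\]
Taking the maximum over feasible $g$ yields $\rho(\mc{M})\le\rk(\mc{M})$.

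One degenerate case needs a check: if $\mc{M}=\fbr{U}$, then $C=\emptyset$ works. Then $\cl_{\mc{M}}(\emptyset)=U$, so $g(U)=g(\emptyset)\le 0$, and the argument still goes through. There is no real obstacle here; the only point to verify is that minimality of $C$ is not needed. Any set $C$ not contained in a proper member of $\mc{M}$ has closure $U$, so any such $C$ works, and choosing a smallest one gives the sharpest bound.
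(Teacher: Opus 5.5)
Your proof is correct and is the same argument as the paper's. You take $C$ with $\lbr{C}=\rk(\mc{M})$ and $\cl_{\mc{M}}(C)=U$, then chain compatibility with the $1$-polymatroid bound to get $g(U)=g(C)\le\lbr{C}$; your explicit derivation of $\cl_{\mc{M}}(C)=U$ from the definition of rank and your check of the case $\mc{M}=\fbr{U}$ are both sound.
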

\begin{proof}
    Fix a $1$-polymatroid $(U,g)$ compatible with $\mc{M}$. Let $C \subseteq U$ be such that $\lbr{C} = \rk(\mc{M})$ and $\cl_{\mc{M}}(C) = U$. Then
    \[
    g(U) = g(\cl_{\mc{M}}(C)) = g(C)\le \lbr{C} = \rk(\mc{M}).
    \]

    Therefore $\rho(\mc{M}) \le \rk(\mc{M})$.
\end{proof}

Combining Theorem~\ref{thrm:kL-to-M} and Corollary~\ref{corollary:alpha-le-rho}, we obtain the following corollary.
\begin{corollary}
    Let $k\ge 1$ and $L\seq\fbr{0,\ldots,k-1}$. Then
    \[
        m(n,k,L)=O_{k}(n^{\rho(k,L)}),
    \]
    where
    \[
        \rho(k,L)=\max\fbrc{\rho(\mc{M})}{\begin{array}{l}
        \mc{M}\text{ is a closure system on }[k],\\
        \lbr{A}\in L\text{ for every }A\in\mc{M}\setminus\fbr{[k]}
        \end{array}}.
    \]
\end{corollary}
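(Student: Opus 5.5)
The plan is to combine Theorem~\ref{thrm:kL-to-M} with the bound $m(n,\mc{M})\le n^{\alpha(\mc{M})}$ from Theorem~\ref{thrm:alpha-for-mM-exists} and the inequality $\alpha(\mc{M})\le\rho(\mc{M})$ from Corollary~\ref{corollary:alpha-le-rho}. The point is that the non-asymptotic inequality $m(n,\mc{M})\le n^{\alpha(\mc{M})}$ holds for every $n\ge1$, so no $o(1)$ term appears in the exponent. The only loss is the constant hidden in the $\Theta_k$ of Theorem~\ref{thrm:kL-to-M}.

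First, fix $k$ and $L$. Let $\mathfrak{S}$ be the set of closure systems $\mc{M}$ on $[k]$ with $\lbr{A}\in L$ for every $A\in\mc{M}\setminus\fbr{[k]}$. This set is finite, since it is contained in $2^{2^{[k]}}$, and it is nonempty, because $\fbr{[k]}\in\mathfrak{S}$. Hence the maximum defining $\rho(k,L)$ is a maximum over finitely many rationals and is attained.

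By the upper half of Theorem~\ref{thrm:kL-to-M}, there is a constant $C_k$ such that for all $n$
\[
m(n,k,L)\le C_k\max_{\mc{M}\in\mathfrak{S}} m(n,\mc{M}).
\]
For each $\mc{M}\in\mathfrak{S}$ and every $n\ge1$, Theorem~\ref{thrm:alpha-for-mM-exists} and Corollary~\ref{corollary:alpha-le-rho} give
\[
m(n,\mc{M})\le n^{\alpha(\mc{M})}\le n^{\rho(\mc{M})}\le n^{\rho(k,L)},
\]
where the middle step uses $n\ge1$. Combining the two displays yields $m(n,k,L)\le C_k n^{\rho(k,L)}=O_k(n^{\rho(k,L)})$.

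There is no real obstacle here; the statement is a direct assembly of earlier results. The one point needing care is that a bound of the form $m(n,\mc{M})\le n^{\alpha(\mc{M})+o(1)}$ would not suffice for an $O_k(\cdot)$ statement. That is why I rely on the exact bound $m(n,\mc{M})\le n^{\alpha(\mc{M})}$ from Theorem~\ref{thrm:alpha-for-mM-exists}, which comes from supermultiplicativity in Proposition~\ref{prop:product-code-mM}, rather than on the limit alone. If one preferred to avoid this, the entropy argument of Theorem~\ref{thrm:alpha-le-eta} applied directly to a fixed $n$ also gives $\log_n m(n,\mc{M})\le\rho(\mc{M})$ for each $n\ge2$. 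In that argument, take $\xi_U$ uniform on an extremal $\mc{M}$-system in $[n]^U$ and observe that $g(A)=\Ent(\xi_A)/\log_2 n$ is a $1$-polymatroid compatible with $\mc{M}$.
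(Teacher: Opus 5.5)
Your proof is correct and follows the paper's route: combine the upper half of Theorem~\ref{thrm:kL-to-M} with Corollary~\ref{corollary:alpha-le-rho}. Your explicit use of the non-asymptotic bound $m(n,\mc{M})\le n^{\alpha(\mc{M})}$ from Theorem~\ref{thrm:alpha-for-mM-exists} (or, equivalently, the fixed-$n$ entropy argument) supplies the step the paper leaves implicit, namely that no $n^{o(1)}$ factor enters the $O_k$ bound.
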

Together with Corollary~\ref{corollary:alpha-le-rho} and Proposition~\ref{prop:rho-le-than-rk}, this corollary proves Theorem~\ref{thrm:rho-upper-intro}.

\section{Conjecture of F\"uredi}\label{sec:furedi}

For a closure system $\mc{M}$ on $U$, the rank function from the introduction can equivalently be defined by
\[
    \rk(\mc{M})=\min\fbrc{\lbr{C}}{C\seq U,\ \cl_{\mc{M}}(C)=U}.
\]
By Proposition~\ref{prop:rho-le-than-rk}, $\rho(\mc{M})\le\rk(\mc{M})$. As discussed in the introduction, the bound $\alpha(k,L)\le \rk(k,L)$ is not always tight, since the exponent $\alpha(k,L)$ need not be an integer. However, the currently known examples with non-integer exponent do not rule out the following conjecture of F\"uredi~\cite{1991Furedi}.
\begin{conjecture}[\cite{1991Furedi}]\label{conj:Furedi-k-L}
    For every $k\ge 1$ and $L\seq\fbr{0,\ldots,k-1}$,
    \[
        \ceil{\alpha(k,L)}=\rk(k,L).
    \]
\end{conjecture}
F\"uredi stated the conjecture in terms of the growth of $m(n,k,L)$; Corollary~\ref{corollary:alpha-kL-exists} gives the formulation above. This conjecture holds for $k\le 12$~\cite{1996Frankl,2006Tokushige} and for $\rk(k,L)=2$~\cite{1991Furedi}.

A direct generalization of this conjecture to closure systems would assert that $\ceil{\alpha(\mc{M})}=\rk(\mc{M})$. The following examples show that this generalization already fails for rank three.
\begin{proposition}\label{prop:counterexample-3-2}
    There exists a closure system $\mc{M}$ on 6 elements such that $\rk(\mc{M})=3$ and $\alpha(\mc{M})\le 2$.
\end{proposition}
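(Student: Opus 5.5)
The plan is to exhibit an explicit closure system $\mc{M}$ on $U=[6]$ and to bound $\alpha(\mc{M})$ through the polymatroid relaxation. By Corollary~\ref{corollary:alpha-le-rho} we have $\alpha(\mc{M})\le\eta(\mc{M})\le\rho(\mc{M})$, so it suffices to verify two facts:
\begin{enumerate}
    \item $\rk(\mc{M})=3$;
    \item every $1$-polymatroid $g$ compatible with $\mc{M}$ satisfies $g(U)\le 2$, i.e.\ $\rho(\mc{M})\le 2$ by Proposition~\ref{prop:rho-is-max-polymatroid}.
\end{enumerate}
The rank condition is purely combinatorial: for every $2$-set $C$ we need $\cl_{\mc{M}}(C)\ne U$, i.e.\ $C$ lies in some proper member of $\mc{M}$, and at least one $3$-set must lie in no proper member. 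I will list $\mc{M}$ explicitly and check closure under intersections directly.

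The certificate for $\rho(\mc{M})\le2$ will be a short chain of submodularity inequalities, i.e.\ a dual solution of the linear program in Corollary~\ref{corollary:alpha-le-rho}. The basic mechanism is the following. Suppose $X,Y\in\mc{M}$ are proper flats with $\cl_{\mc{M}}(X\cup Y)=U$. Then submodularity and compatibility give
\[
g(U)=g(X\cup Y)\le g(X)+g(Y)-g(X\cap Y).
\]
Now suppose each of $X$ and $Y$ is the closure of several small sets, some of which meet $X\cap Y$. Then compatibility gives upper bounds such as
\[
g(X)=g(\cl_{\mc{M}}(\{a,c\}))\le g(a)+g(c),\qquad g(X)\le 2.
\]
Combining these upper bounds with the subtracted term $g(X\cap Y)$ is what should force $g(U)\le 2$. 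A naive attempt is the following configuration:
\[
X=\{1,2,3,4\},\qquad Y=\{3,4,5,6\},\qquad X\cap Y=\{3,4\}=\cl_{\mc{M}}(\{3\}),
\]
with all other pairs closed. This only yields $g(U)\le\min(4-g(3),\,2+g(3))\le 3$, so one extra constraint is needed.

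The plan for the missing constraint is to add a third family of flats, built using the sixth element, whose pairwise closures again have small rank and overlap with the previous ones. Summing the three resulting submodularity inequalities should then cancel the free parameter $g(3)$, or whatever plays its role. I would first try a triangle-like configuration. The ground set is split into three pairs
\[
P_1=\{1,2\},\qquad P_2=\{3,4\},\qquad P_3=\{5,6\}.
\]
The proper flats are the three unions $P_i\cup P_j$ together with the pairs $P_i$. Each $P_i$ is required to be the closure of each of its elements, and each $P_i\cup P_j$ is required to be the closure of any transversal pair between $P_i$ and $P_j$. In this configuration no $2$-set spans $U$, while a transversal $3$-set does span $U$, so $\rk(\mc{M})=3$.

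For the bound, write $h_i=g(P_i)$, which by compatibility equals the value of $g$ on any single element of $P_i$, so $h_i\le 1$. Then
\[
g(P_i\cup P_j)\le h_i+h_j
\]
because $P_i\cup P_j$ is the closure of a transversal pair. Applying submodularity to $P_1\cup P_2$ and $P_2\cup P_3$, whose intersection is $P_2$, gives
\[
g(U)\le h_1+h_2+h_3-h_2=h_1+h_3\le 2.
\]
If this configuration fails intersection-closedness or the rank requirement, I would fall back to solving the LP for small candidate systems by computer and reading off the dual certificate.

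The main obstacle is designing $\mc{M}$ so that three requirements hold simultaneously: the family is intersection-closed, no pair spans $U$, and enough compatibility equalities hold to make the dual certificate close. In particular, requiring $\cl_{\mc{M}}(\{i\})=P_i$ must not accidentally create a spanning pair, and I would check this first.
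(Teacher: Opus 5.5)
\textbf{Verdict: there is a genuine gap.} Your overall strategy is the paper's: bound $\alpha(\mc{M})$ by $\rho(\mc{M})$ and give a dual certificate built from submodularity and compatibility. But the closure system you propose does not work, and the last step of your certificate drops a term. From $g(P_1\cup P_2)\le h_1+h_2$ and $g(P_2\cup P_3)\le h_2+h_3$, submodularity only gives
\[
    g(U)\le (h_1+h_2)+(h_2+h_3)-h_2=h_1+h_2+h_3\le 3,
\]
not $h_1+h_3$. No cleverer combination of inequalities can fix this. Your family $\mc{M}=\fbr{\emptyset,P_1,P_2,P_3,P_1\cup P_2,P_1\cup P_3,P_2\cup P_3,U}$ is exactly the family of flats of the matroid with three parallel classes $P_1,P_2,P_3$. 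Its rank function $g(A)=\lbr{\fbrc{i}{A\cap P_i\ne\emptyset}}$ is a $1$-polymatroid compatible with $\mc{M}$ with $g(U)=3$. More directly, $\fbrc{(a,a,b,b,c,c)}{a,b,c\in[n]}$ is an $\mc{M}$-system of size $n^3$, so $\alpha(\mc{M})=3$. Your fallback is also tied to the wrong failure mode: this system is intersection-closed and has rank $3$, and it is the bound that fails. An unspecified computer search would not be a proof in any case.

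\textbf{The missing idea: the closure system must not be a matroid flat lattice.} Since $\rho(\Flats(N))=\rk(N)$ for every matroid $N$, no flat lattice of a rank-$3$ matroid can ever certify $\rho\le 2$. The paper's certificate closes because its closure operator violates the exchange property. For example, $\cl_{\mc{M}}(\fbr{1})=\fbr{1,4}$ while $\fbr{4}\in\mc{M}$, so $4\in\cl_{\mc{M}}(\fbr{1})\setminus\cl_{\mc{M}}(\emptyset)$ but $1\notin\cl_{\mc{M}}(\fbr{4})$. Likewise $\cl_{\mc{M}}(\fbr{5})=\fbr{2,5}$ while $\fbr{2}\in\mc{M}$. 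Compatibility then forces $g(\fbr{1,4})\le 1$ and $g(\fbr{2,5})\le 1$, while $g(\fbr{4})$ and $g(\fbr{2})$ still appear as subtracted intersection terms in submodularity.

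The paper uses a specific $19$-member closure system on $[6]$ with these features, together with nine inequalities:
\begin{enumerate}
    \item the bounds $g\le 1$ on $\fbr{3},\fbr{6},\fbr{1,4},\fbr{2,5}$;
    \item the bounds $g(\fbr{2,3,4,5})\le g(\fbr{2})+g(\fbr{3})$ and $g(\fbr{1,2,4,6})\le g(\fbr{4})+g(\fbr{6})$, valid because these sets are the closures of $\fbr{2,3}$ and $\fbr{4,6}$;
    \item three submodularity inequalities.
\end{enumerate}
Their sum cancels to exactly $2g(U)\le 4$. To complete your proof, you need to supply such a non-matroidal system, check that it is intersection-closed with rank $3$, and give a certificate of this kind.
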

\begin{proof}
    Consider the following closure system $\mc{M}$ on $U = \{1,2,3,4,5,6\}$:
    \begin{gather*}
        \fbr{1, 2, 3, 4, 5, 6},\\
        \fbr{1, 2, 4, 5},
        \fbr{1, 2, 4, 6},
        \fbr{2, 3, 4, 5},\\
        \fbr{1, 2, 4},
        \fbr{1, 3, 4},
        \fbr{2, 4, 5},
        \fbr{2, 5, 6},\\
        \fbr{1, 4},
        \fbr{2, 4},
        \fbr{2, 5},
        \fbr{2, 6},
        \fbr{3, 4},
        \fbr{3, 6},\\
        \fbr{2},
        \fbr{3},
        \fbr{4},
        \fbr{6},\\
        \emptyset.
    \end{gather*}

    It is easy to verify that this family is intersection-closed and $\rk(\mc{M})=3$.

    Let $(U,g)$ be an arbitrary $1$-polymatroid compatible with $\mc{M}$. Summing the following inequalities,
    \begin{alignat*}{2}
        g(\fbr{3})&\  &\le&\ 1;\\
        g(\fbr{6})&\  &\le&\ 1;\\
        g(\fbr{1, 4})&\ \Big(= g(\cl_{\mc{M}}(\fbr{1}))\Big)\ &\le&\ 1;\\
        g(\fbr{2, 5})&\ \Big(= g(\cl_{\mc{M}}(\fbr{5}))\Big)\ &\le&\ 1;\\
        g(\fbr{2,3,4,5})&\  &\le&\ g(\fbr{2}) + g(\fbr{3});\\
        g(\fbr{1,2,4,6})&\  &\le&\ g(\fbr{4}) + g(\fbr{6});\\
        g(\fbr{1,3,4})&+ g(\fbr{4})\  &\le&\ g(\fbr{1,4}) + g(\fbr{3,4});\\
        g(\fbr{1,2,3,4,5,6})&+ g(\fbr{2})\  &\le&\ g(\fbr{1,2,4,6}) + g(\fbr{2,5});\\
        g(\fbr{1,2,3,4,5,6})&+ g(\fbr{3,4})\ &\le&\ g(\fbr{1,3,4}) + g(\fbr{2,3,4,5}).
    \end{alignat*}
    we obtain
    \[
        2 \cdot g(\{1, 2, 3, 4, 5, 6\}) \le 4.
    \]
    Hence $\rho(\mc{M}) \le 2$, and therefore $\alpha(\mc{M}) \le 2$.
\end{proof}

In fact, there exists a closure system $\mc{M}$ for which $\alpha(\mc{M})<\rk(\mc{M})-1$.
\begin{proposition}\label{prop:counterexample-3-19-10}
    There exists a closure system $\mc{M}$ on 10 elements such that $\rk(\mc{M})=3$ and $\alpha(\mc{M})\le \frac{19}{10}$.
\end{proposition}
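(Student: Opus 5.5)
We follow the template of Proposition~\ref{prop:counterexample-3-2}. We exhibit an explicit closure system $\mc{M}$ on $U=\{1,\ldots,10\}$ and check two things. First, we check that $\mc{M}$ is intersection-closed and that $\rk(\mc{M})=3$. This means no pair $C$ with $\lbr{C}=2$ satisfies $\cl_{\mc{M}}(C)=U$, while some triple does. Second, we certify $\rho(\mc{M})\le \frac{19}{10}$ with a dual certificate. Corollary~\ref{corollary:alpha-le-rho} then gives $\alpha(\mc{M})\le\eta(\mc{M})\le\rho(\mc{M})\le\frac{19}{10}$.

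The certificate is a nonnegative integer combination of constraints valid for every $1$-polymatroid compatible with $\mc{M}$. There are three kinds.
\begin{itemize}
\item Bounds $g(\cl_{\mc{M}}(\{u\}))\le 1$, which use compatibility to convert singleton bounds into bounds on closed sets.
\item Subadditivity inequalities $g(\cl_{\mc{M}}(A\cup B))\le g(A)+g(B)$.
\item Submodularity inequalities $g(X\cup Y)+g(X\cap Y)\le g(X)+g(Y)$ for closed $X,Y$, where $X\cup Y$ is replaced by its closure.
\end{itemize}
Summing these must telescope to $10\,g(U)\le 19$.

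To find $\mc{M}$, we plan a computer-aided search. We enumerate closure systems on a modest ground set, or grow them from the $6$-element example by adding elements and sets. For each candidate we solve the linear program of Corollary~\ref{corollary:alpha-le-rho} exactly over the rationals, and keep those of rank $3$ with optimum $\le 19/10$. Natural building blocks are closed sets of size $2$ of the form $\cl_{\mc{M}}(\{u\})$, which force two coordinates to have combined ``weight'' at most $1$. We also want many rank-$2$ flats of the form $\cl_{\mc{M}}(\{a,b\})$, so that several different pairs each generate a large flat. When these are combined through submodularity, the resulting penalties should push $g(U)$ below $2$.

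From the optimal dual solution we read off the multipliers and present them as a list of inequalities, exactly as in the proof of Proposition~\ref{prop:counterexample-3-2}. The reader then only has to check each inequality (a closure computation or a submodularity instance) and verify that the sum cancels to $10\,g(U)\le 19$.

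The main obstacle is finding a system $\mc{M}$ that is simultaneously of rank $3$ and has LP value as low as $19/10$. Lowering $\rho$ requires many small closed sets and large closures, and these tend to create a generating pair, which drops the rank to $2$. I would first try extending the $6$-element example by four elements arranged so that the previous certificate can be applied to several overlapping copies. Averaging over copies could then produce a denominator of $10$. If that fails, I would fall back on a systematic LP search.
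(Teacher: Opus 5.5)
\textbf{Gap: the witness is missing.} Your framework is exactly the paper's. You exhibit $\mc{M}$, check that it is intersection-closed, and check that every pair lies in a proper member while some triple does not. You then certify $\rho(\mc{M})\le\frac{19}{10}$ by a nonnegative combination of three kinds of inequality:
\begin{itemize}
\item compatibility bounds $g(\cl_{\mc{M}}(\{u\}))\le 1$;
\item subadditivity instances;
\item submodularity instances on closed sets.
\end{itemize}
Finally you invoke Corollary~\ref{corollary:alpha-le-rho}. But the proposal stops before the proof begins. The statement is a pure existence claim, and its entire content is the witness: a specific closure system on ten points together with specific multipliers. You supply neither. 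Instead you describe a computer search you intend to run (first gluing copies of the six-element example, then a systematic LP search), with no argument that it succeeds. The values $10$ and $\frac{19}{10}$ appear in your text only as targets. Nothing you wrote rules out that every rank-$3$ closure system on ten points has $\rho(\mc{M})>\frac{19}{10}$. So the proposal would read identically if the proposition were false.

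\textbf{What the paper supplies.} Appendix~\ref{appendix:example-10} gives the concrete data. It writes down a closure system on $\{1,\dots,10\}$ in which element $1$ is a hub: every member containing one of $4,7,8,9$ also contains $1$. Hence $\cl_{\mc{M}}(\{4\})=\{1,4\}$, $\cl_{\mc{M}}(\{8\})=\{1,8\}$ and $\cl_{\mc{M}}(\{9\})=\{1,7,9\}$. The paper then lists twenty inequalities of precisely your three kinds, for example:
\begin{itemize}
\item $g(\{1,8\})=g(\cl_{\mc{M}}(\{8\}))\le 1$;
\item $g(\{1,3,7\})\le g(\{1\})+g(\{3\})$;
\item $g(\{1,4,5,6,7,9,10\})+g(\{1,6\})\le g(\{1,4,5,6\})+g(\{1,6,10\})$.
\end{itemize}
The weights lie in $\{\frac{1}{10},\dots,\frac{5}{10}\}$, and the weighted sum cancels every intermediate term, leaving $10\,g(U)\le 19$. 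With such data in hand, your verification plan goes through verbatim: each inequality is a closure computation or a polymatroid axiom, and the rank check is a finite pair/triple check.

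Without that data, what you have is a research plan rather than a proof. Your gluing heuristic is also unsupported: the paper's system is not presented as such a gluing, and you give no reason it would reach $\frac{19}{10}$.
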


The construction in Proposition~\ref{prop:counterexample-3-19-10} and its verification are given in Appendix~\ref{appendix:example-10}.

To extend the counterexample to arbitrary rank, we use direct sums.
\begin{definition}
    Let $\mc{M}$ be a closure system on $U$ and let $\mc{N}$ be a closure system on $V$. Choose disjoint copies $U',V'$ of the ground sets, together with isomorphic closure systems $\mc{M}'$ on $U'$ and $\mc{N}'$ on $V'$. Define
    \[
        \mc{M}\oplus\mc{N}=\fbrc{A\cup B}{A\in\mc{M}',\ B\in\mc{N}'}.
    \]
\end{definition}

The direct sum $\mc{M}\oplus\mc{N}$ is again a closure system and satisfies the following relations with its summands.
\begin{proposition}\label{prop:rk-and-m-of-direct-sum}
    Let $\mc{M}$ be a closure system on $U$ and let $\mc{N}$ be a closure system on $V$. Then
    \[
        \rk(\mc{M}\oplus\mc{N})=\rk(\mc{M})+\rk(\mc{N})
    \]
    and, for every $n\ge 1$,
    \[
        m(n,\mc{M}\oplus\mc{N})=m(n,\mc{M})\cdot m(n,\mc{N}).
    \]
    Consequently, $\alpha(\mc{M}\oplus\mc{N})=\alpha(\mc{M})+\alpha(\mc{N})$.
\end{proposition}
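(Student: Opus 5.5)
We identify $\mc{M}\oplus\mc{N}$ with a closure system on the disjoint union $W=U\sqcup V$, writing $U,V$ for the copies. The basic observation is that every member of $\mc{M}\oplus\mc{N}$ decomposes uniquely as $A\cup B$ with $A\in\mc{M}$ and $B\in\mc{N}$. Consequently, for $X\seq W$ we have
\[
    \cl_{\mc{M}\oplus\mc{N}}(X)=\cl_{\mc{M}}(X\cap U)\cup\cl_{\mc{N}}(X\cap V).
\]
The right-hand side is a member of $\mc{M}\oplus\mc{N}$ containing $X$. Conversely, any member $A\cup B\supseteq X$ satisfies $A\supseteq X\cap U$ and $B\supseteq X\cap V$, and hence contains both closures. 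The whole proof will rest on this formula.

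\emph{Rank.} By the formula, $\cl_{\mc{M}\oplus\mc{N}}(C)=W$ holds exactly when $\cl_{\mc{M}}(C\cap U)=U$ and $\cl_{\mc{N}}(C\cap V)=V$. Taking a minimal spanning set of each summand and forming their union gives $\rk(\mc{M}\oplus\mc{N})\le\rk(\mc{M})+\rk(\mc{N})$. In the other direction, if $C$ spans $W$, then $C\cap U$ spans $U$ and $C\cap V$ spans $V$, so $\lbr{C}\ge\rk(\mc{M})+\rk(\mc{N})$.

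\emph{Product formula.} First the lower bound. Given $\mc{M}$- and $\mc{N}$-systems $\mc{F}_1\seq[n]^U$ and $\mc{F}_2\seq[n]^V$, take $\mc{F}=\fbrc{F_1\cup F_2}{F_i\in\mc{F}_i}\seq[n]^W$. For two of its members, the agreement set is $A\cup B$, where $A$ is the agreement set of the $\mc{F}_1$-parts and $B$ that of the $\mc{F}_2$-parts. This lies in $\mc{M}\oplus\mc{N}$, so $\mc{F}$ is an $(\mc{M}\oplus\mc{N})$-system of size $\lbr{\mc{F}_1}\lbr{\mc{F}_2}$.

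The upper bound is the main step. Let $\mc{F}\seq[n]^W$ be an $(\mc{M}\oplus\mc{N})$-system. Its agreement sets have the form $A\cup B$ with $A\in\mc{M}$ and $B\in\mc{N}$, and the $U$-part $A$ is exactly the agreement set of the projections to $U$. Hence $\pi_U(\mc{F})$ is an $\mc{M}$-system and $\pi_V(\mc{F})$ is an $\mc{N}$-system. The worry is that $\mc{F}$ might fail to be injective under $\pi_U\times\pi_V$. But $F\mapsto(\pi_U(F),\pi_V(F))$ is injective because $W=U\sqcup V$. Thus $\lbr{\mc{F}}\le\lbr{\pi_U(\mc{F})}\cdot\lbr{\pi_V(\mc{F})}\le m(n,\mc{M})\,m(n,\mc{N})$. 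There is one subtlety to check: distinct members of $\mc{F}$ may have equal projections, but this does not matter, since $\pi_U(\mc{F})$ is a set and the $\mc{M}$-system condition involves only pairs of its elements. Pairs with equal projections agree on all of $U\in\mc{M}$, which is allowed.

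\emph{Exponents.} Taking $\log_n$ of the identity $m(n,\mc{M}\oplus\mc{N})=m(n,\mc{M})\,m(n,\mc{N})$ and letting $n\to\infty$ gives $\alpha(\mc{M}\oplus\mc{N})=\alpha(\mc{M})+\alpha(\mc{N})$, using the existence of all three limits from Theorem~\ref{thrm:alpha-for-mM-exists}. I expect no real obstacle. The only point requiring care is the closure formula and the unique decomposition of members of the direct sum, both of which follow immediately from the disjointness of $U'$ and $V'$.
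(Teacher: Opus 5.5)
Your proposal is correct and follows essentially the same route as the paper. The closure formula $\cl_{\mc{M}\oplus\mc{N}}(X)=\cl_{\mc{M}}(X\cap U)\cup\cl_{\mc{N}}(X\cap V)$ gives the rank identity, and the product construction gives the lower bound on $m$. The injection $F\mapsto(\pi_U(F),\pi_V(F))$ into $\mc{M}$- and $\mc{N}$-systems gives the upper bound, and the exponent identity follows from Theorem~\ref{thrm:alpha-for-mM-exists}. You also spell out both directions of the rank argument, which the paper leaves implicit.
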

\begin{proof}
    Without loss of generality, we may assume that $U$ is disjoint from $V$ and that $\mc{M}\oplus\mc{N}$ is defined on $U\cup V$.

    The expression for the rank follows from the fact that, for $A\seq U$ and $B\seq V$,
    \[\cl_{\mc{M}\oplus\mc{N}}(A\cup B)=\cl_{\mc{M}}(A) \cup \cl_{\mc{N}}(B).\]

    Fix $n\ge 1$. Let $\mc{F}\seq [n]^U$ be an $\mc{M}$-system, and let $\mc{G}\seq [n]^V$ be an $\mc{N}$-system. Define $\mc{F}'\seq [n]^{U\cup V}$ by
    \[\mc{F}'=\fbrc{((\pi_u(F))_{u\in U}, (\pi_v(G))_{v\in V})}{F\in\mc{F},\ G\in\mc{G}}.\]
    It is straightforward to check that $\mc{F}'$ is an $\mc{M}\oplus\mc{N}$-system. Thus
    \[m(n,\mc{M}\oplus\mc{N}) \ge m(n,\mc{M})\cdot m(n,\mc{N}).\]

    Let $\mc{F}\seq [n]^{U\cup V}$ be an $\mc{M}\oplus\mc{N}$-system with $\lbr{\mc{F}}=m(n,\mc{M}\oplus\mc{N})$. It is straightforward to check that $\pi_{U}(\mc{F})$ is an $\mc{M}$-system and $\pi_{V}(\mc{F})$ is an $\mc{N}$-system. Thus
    \[m(n,\mc{M}\oplus\mc{N})=\lbr{\mc{F}}\le \lbr{\pi_{U}(\mc{F})}\cdot \lbr{\pi_{V}(\mc{F})} \le m(n,\mc{M})\cdot m(n,\mc{N}).\]
    The identity for $\alpha$ follows from Theorem~\ref{thrm:alpha-for-mM-exists}.
\end{proof}

\begin{corollary}\label{corollary:counterexample-Furedi-S-all-r}
    For every integer $r\ge 3$ there exists a closure system $\mc{M}$ such that $\rk(\mc{M})=r$ and
    \[\alpha(\mc{M})\le \frac45r,\qquad \alpha(\mc{M})<r-1.\]
\end{corollary}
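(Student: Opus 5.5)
We build $\mc{M}$ as a direct sum of copies of the two rank-three examples with small exponent, plus (if needed) one copy of a trivial closure system. Proposition~\ref{prop:rk-and-m-of-direct-sum} makes both the rank and $\alpha$ additive. Let $\mc{M}_3$ be the closure system of Proposition~\ref{prop:counterexample-3-19-10}, with $\rk(\mc{M}_3)=3$ and $\alpha(\mc{M}_3)\le \frac{19}{10}$. Note that $\frac{19}{10}\le\frac45\cdot 3=\frac{12}{5}$, and $\frac{19}{10}<2=3-1$. Let $\mc{B}=\fbr{\emptyset,\fbr{1}}$ be the closure system on a single element. Then $\rk(\mc{B})=1$, and $m(n,\mc{B})=n$, so $\alpha(\mc{B})=1$.

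For $r\ge3$, write $r=3q+s$ with $q\ge1$ and $s\in\fbr{0,1,2}$. The natural choice is
\[
    \mc{M}=\underbrace{\mc{M}_3\oplus\cdots\oplus\mc{M}_3}_{q}\oplus\underbrace{\mc{B}\oplus\cdots\oplus\mc{B}}_{s}.
\]
By Proposition~\ref{prop:rk-and-m-of-direct-sum}, $\rk(\mc{M})=3q+s=r$, and
\[
    \alpha(\mc{M})\le \tfrac{19}{10}q+s.
\]
The condition $\alpha(\mc{M})<r-1$ then holds because $\frac{19}{10}q+s<3q+s-1$ whenever $q\ge1$.

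The step I expect to be the obstacle is the bound $\frac{19}{10}q+s\le\frac45(3q+s)$. Rearranging, it is equivalent to $\frac{5}{10}q\le -\frac15 s$, which is false. So summing copies of $\mc{B}$ is too wasteful, and plain additivity with this decomposition does not give $\frac45 r$. The $q$ copies alone give ratio $\frac{19}{30}<\frac45$ per unit of rank, so the only difficulty is the leftover $s$. Each extra unit of rank should be absorbed with exponent at most $\frac45$ on average, and the $\mc{B}$ summands contribute exponent $1$ per unit instead.

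The fix I would try first uses the rank-three example of Proposition~\ref{prop:counterexample-3-2}, call it $\mc{M}'_3$, with $\alpha(\mc{M}'_3)\le2$, together with rank-$4$ and rank-$5$ blocks. Every integer $r\ge3$ is a nonnegative combination of $3,4,5$. A rank-$4$ block $\mc{M}_3\oplus\mc{B}$ has $\alpha\le 2.9\le 3.2=\frac45\cdot4$. A rank-$5$ block $\mc{M}_3\oplus\mc{B}\oplus\mc{B}$ has $\alpha\le3.9\le4=\frac45\cdot5$. So I would write $r=3q+4a+5b$ using at most one block of rank $4$ or one block of rank $5$ (and $q\ge 0$). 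Each block satisfies $\alpha\le\frac45\rk$ and $\alpha<\rk-1$, and additivity gives $\alpha(\mc{M})\le\frac45 r$ and $\alpha(\mc{M})<r-1$, where the strict inequality follows since at least one block is present. The cases $r=3,4,5$ are single blocks.
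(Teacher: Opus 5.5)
Your first construction, $q$ copies of the $\frac{19}{10}$ example $\mc{M}_3$ plus $s$ copies of $\mc{B}=2^{[1]}$, is exactly the paper's proof, and it works. The ``obstacle'' comes from a sign error in your rearrangement. In fact
\[
\tfrac{19}{10}q+s\le\tfrac45(3q+s)\iff \tfrac15 s\le \tfrac{5}{10}q\iff s\le\tfrac52 q,
\]
and this holds because $q\ge1$ and $s\le2$. Put differently, one copy of $\mc{M}_3$ leaves slack $\frac{12}{5}-\frac{19}{10}=\frac12$, while each $\mc{B}$ overshoots $\frac45$ by only $\frac15$. So the at most two copies of $\mc{B}$ cost $\frac25\le\frac12$. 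Together with your correct check that $\frac{19}{10}q+s<r-1$, the proof is already complete at that point.

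Your ``fix'' is not a different construction. A rank-$4$ block $\mc{M}_3\oplus\mc{B}$ or a rank-$5$ block $\mc{M}_3\oplus\mc{B}\oplus\mc{B}$, together with copies of $\mc{M}_3$, is the same direct sum, only regrouped. That is why all your block inequalities come out fine: they are the same slack computation. The one thing to avoid is the example $\mc{M}'_3$ of Proposition~\ref{prop:counterexample-3-2}. It only gives $\alpha(\mc{M}'_3)\le 2=\rk(\mc{M}'_3)-1$, so your claim that every block satisfies $\alpha<\rk-1$ is unjustified for it. If $\mc{M}'_3$ were the single block for $r=3$, the strict inequality $\alpha(\mc{M})<r-1$ would not be established. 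Use $\mc{M}_3$ for all rank-three pieces, and your argument coincides with the paper's.
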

\begin{proof}
    Write $r=3q+t$, where $q\ge1$ and $t\in\{0,1,2\}$. Let $\mc{K}$ be the direct sum of $q$ copies of the closure system from Proposition~\ref{prop:counterexample-3-19-10} and $2^{[t]}$. By Proposition~\ref{prop:rk-and-m-of-direct-sum}, $\rk(\mc{K})=r$ and
    \[\alpha(\mc{K})\le \frac{19}{10}q+t\le \frac45r,\qquad
      \alpha(\mc{K})\le r-\frac{11}{10}q<r-1.\]
\end{proof}

\section{Lower bound via entropic vectors}\label{sec:alpha-equals-eta}

To prove the lower bound $\alpha(\mc{M})\ge \eta(\mc{M})$, we first show that \emph{almost entropic} vectors compatible with a closure system $\mc{M}$ can be approximated by \emph{entropic} vectors compatible with $\mc{M}$. We need the following lemma, which shows that if conditional entropy is small, then one can add a random variable of small entropy so that the conditional entropy becomes zero.

\begin{lemma}\label{lemma:small-fix-of-conditional-entropy}
    There is a universal function $f$ with $f(0)=0$ and $f(c)\to0$ as $c\to0$ such that, for any finite-support random variables $X$ and $Y$, one can adjoin a finite-support random variable $R$ to $Y$ so that
    \[\Ent(X|(Y,R))=0,\qquad \Ent(R)\le f(\Ent(X|Y)).\]
\end{lemma}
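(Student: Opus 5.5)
The idea is to let $R$ record $X$ only on the event where $Y$ fails to predict $X$ well. For each value $y$ of $Y$, let $x^*(y)$ be a most likely value of $X$ given $Y=y$, breaking ties by a fixed rule, and let $\hat X=x^*(Y)$ be the MAP estimator. Define
\[
R=\begin{cases}\ast,&\text{if }X=\hat X;\\ X,&\text{if }X\neq\hat X,\end{cases}
\]
where $\ast$ is a fresh symbol not in the support of $X$. Then $R$ is a function of $(X,Y)$, so it lives on the same probability space and has finite support. Moreover $X$ is a function of $(Y,R)$: if $R=\ast$ then $X=x^*(Y)$, and otherwise $X=R$. Hence $\Ent(X|(Y,R))=0$.

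It remains to bound $\Ent(R)$ by a function of $h=\Ent(X|Y)$ alone, independent of the support size of $X$. This is the main obstacle. The naive bound $\Ent(R)\le h_2(p)+p\log|\supp X|$, with $p=\P(X\ne\hat X)$, is Fano-style and depends on the alphabet. So I will not bound $\Ent(R)$ unconditionally. Instead I will modify the construction to $R=(E,\,X\cdot\mathbf 1_{E})$, where $E=\mathbf 1[X\neq \hat X]$, and use
\[
\Ent(R)\le \Ent(E)+\Ent(R\mid E)\le h_2(p)+p\,\Ent(X\mid E=1).
\]
The remaining term $\Ent(X\mid E=1)$ still involves the unconditional distribution of $X$, which is a problem. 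To fix this, I will instead encode $X$ relative to $Y$. Conditionally on $Y=y$, rank the values of $X$ by decreasing conditional probability, let $K\in\{1,2,\dots\}$ be the rank of the realized $X$, and set $R=K$. Then $X$ is a function of $(Y,K)$, and $K$ takes values in the positive integers.

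The key estimate is then a universal bound on the entropy of the rank variable. First, $\P(K\ge 2\mid Y=y)=1-\max_x p(x|y)\le \Ent(X|Y=y)$ when entropy is measured in bits, since $-\log_2 q\ge 1-q$ for the maximal atom and, more carefully, $\Ent\ge -\log_2\max_x p(x|y)\ge 1-\max_x p(x|y)$. Averaging over $y$ gives $\P(K\ge 2)\le h$. Second, the conditional rank satisfies $\P(K=j\mid Y=y)\le 1/j$, and a Wyner-type inequality gives $\E\log_2 K\le \Ent(X|Y)=h$.

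For a positive-integer random variable with $\P(K\ge2)=p\le h$ and $\E\log_2 K\le h$, I expect the maximum entropy to be at most $h_2(p)+p\cdot\phi(h/p)$, where $\phi(t)$ is the maximal entropy of a distribution on $\{2,3,\dots\}$ with $\E\log_2 K\le t$. The quantity $\phi(t)$ is finite and grows like $O(t+\log t)$ by a standard geometric/zeta-type maximum-entropy computation. The bound $p\,\phi(h/p)$ then tends to $0$ as $h\to0$, because $p\phi(h/p)\lesssim h+p\log(h/p)$, which is small. Taking $f(c)$ to be the supremum of $\Ent(K)$ over all such constraint pairs with $h\le c$ gives a universal $f$ with $f(0)=0$, since $h=0$ forces $K\equiv1$, and with $f(c)\to0$. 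Checking this last maximum-entropy estimate is the delicate step. If it turns out awkward, I will fall back on the cruder inequality $\Ent(K)\le \E\log_2 K+2\log_2(1+\E\log_2K)+O(\P(K\ge2))$-type bounds (Elias-code lengths). Since $\E\log_2 K\le h$ and $\P(K\ge2)\le h$, these still tend to $0$ as $h\to 0$.
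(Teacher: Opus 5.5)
Your final construction is the paper's: $R=K$, the rank of the realized value of $X$ in the list of values ordered by decreasing $p(\cdot\mid y)$. Your key estimate $\E\log_2 K\le \Ent(X|Y)$, obtained from $p(x_{y,j}|y)\le 1/j$, is exactly the estimate the paper uses to bound $\E(L)$. The only difference is in converting this log-moment bound into an entropy bound.

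The paper sets $L=\floor{\log_2 R}$ and bounds $\Ent(L)$ by the entropy of a geometric distribution with mean $c$. It bounds $\Ent(R|L)\le \E L$ by counting the values in a dyadic block, which gives $\Ent(R)\le c+(1+c)\log_2(1+c)-c\log_2 c$.

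You instead split on the event $\{K\ge 2\}$ and use a maximum-entropy bound on $\{2,3,\ldots\}$ under a constraint on $\E\log_2 K$. That works, and the step you call delicate takes one line. Gibbs' inequality with reference distribution $q(k)=k^{-2}/(\pi^2/6-1)$ on $\{2,3,\ldots\}$ gives $\phi(t)\le 2t+\log_2(\pi^2/6-1)\le 2t$. Hence $\Ent(K)\le h_2(p)+2h$ with $p=\P(K\ge 2)\le h$, so $f(c)=h_2(\min(c,1/2))+2c$ works. This is of the same order $c\log_2(1/c)$ as the paper's bound.

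Both routes are equally elementary. The paper's handles the atom at $K=1$ uniformly through the geometric bound, while yours avoids the auxiliary variable $L$.

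Drop the Elias-code fallback, though. A bound of the form $\Ent(K)\le \E\log_2K+2\log_2(1+\E\log_2K)+O(\P(K\ge2))$ is false. For $K\in\{1,2\}$ with $\P(K=2)=p$, one has $\Ent(K)=h_2(p)\approx p\log_2(1/p)$, which is not $O(p)$. The $h_2(p)$ term, or the paper's $-c\log_2 c$ term, is genuinely needed.
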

\begin{proof}

Let $c = \Ent(X|Y)$. If $c=0$, then we can take $R$ to be constant with $\Ent(R)=0$. Henceforth, assume that $c > 0$.

For each $y\in \supp(Y)$ and each $x \in \supp(X)$, let
\[p(x|y)=\mathbb{P}(X=x|Y=y).\]

For each $y\in \supp(Y)$, define
\[\supp(X | Y=y) = \fbrc{x\in \supp(X)}{p(x|y) > 0}.\]
Set $s_y = \lbr{\supp(X | Y=y)}$.

For each $y$, fix an ordering of $\supp(X | Y=y)$, $x_{y,1},\ldots,x_{y,s_y}$, such that
\[\forall i\le s_y-1\quad p(x_{y,i}|y)\ge p(x_{y,i+1}|y).\]

Define an integer-valued random variable $R$ by
\[X=x_{Y,R}.\]

Let $L=\floor{\log_2(R)}$. Fix $y\in \supp(Y)$. Since the sequence $\br{p(x_{y,i}|y)}_{i\in[s_y]}$ is non-increasing and its sum is equal to one, we have
\[\forall i\in [s_y]\quad p(x_{y,i}|y) \le \frac{1}{i}.\]
Hence
\[\forall i\in [s_y]\quad \log_2(i) \le -\log_2(p(x_{y,i}|y)).\]
Thus
\[\mathbb{E}(L|Y=y)=\sum_{i\in [s_y]}p(x_{y,i}|y)\cdot\floor{\log_2(i)}\le -\sum_{i\in [s_y]}p(x_{y,i}|y)\cdot\log_2(p(x_{y,i}|y)) = \Ent(X|Y=y).\]
Therefore,
\[\mathbb{E}(L) \le \Ent(X|Y)=c.\]

Recall that a geometric distribution on $\mathbb{Z}_{\ge 0}$ with parameter $p$ takes the value $l$ with probability $p(1-p)^l$. Among non-negative integer-valued random variables with a given mean, the geometric distribution has the largest entropy, as shown in~\cite[Theorem 5.8]{ConradMaximumEntropy}. Therefore, $\Ent(L)$ can be bounded from above by the entropy of a geometric distribution with the required mean. For completeness, we provide the full proof.

Let $p=\frac{1}{1+c}$. Jensen's inequality for $\log_2$ yields
\[\sum_{l\in \supp(L)}\mathbb{P}(L=l)\log_2\br{\frac{p(1-p)^l}{\mathbb{P}(L=l)}}\le \log_2\br{\sum_{l\ge 0}p(1-p)^l}=0.\]
Thus
\begin{equation}\label{eq:ent-L-estimate-by-mean}
\begin{aligned}
    \Ent(L) \le -\sum_{l\ge 0}\mathbb{P}(L=l)\log_2(p(1-p)^l)&=-\log_2(p) - \log_2(1-p)\mathbb{E}(L)\\
    &\le(1+c)\log_2(1+c)-c\log_2(c).
\end{aligned}
\end{equation}

If $L=l$, then $R$ can take only values in $\fbr{2^l,\ldots,2^{l+1}-1}$, so there are at most $2^{l}$ possible values. Hence $\Ent(R|L=l)\le \log_2(2^l)=l$, and
\[\Ent(R|L)=\mathbb{E}_{l\sim L}\Ent(R|L=l)\le \mathbb{E}_{l\sim L}(l) = \mathbb{E}(L).\]
Then, by \eqref{eq:ent-L-estimate-by-mean},
\[\Ent(R) \le \Ent(R|L) + \Ent(L) \le c + (1+c)\log_2(1+c)-c\log_2(c).\]

Since the right-hand side tends to zero as $c\rightarrow 0$, the required claim follows.

\end{proof}

By definition, almost entropic vectors are limits of entropic vectors. The next theorem shows that, when the limit is compatible with $\mc{M}$, the approximating vectors can also be chosen compatible with $\mc{M}$.

\begin{theorem}\label{thrm:approximation-with-compatible-with-M}
    Let $\mc{M}$ be a closure system on $U$ and let $g\in\clGent{U}$ be an almost entropic vector compatible with $\mc{M}$. Then there exists a sequence $g_i\in\Gent{U}$ of entropic vectors compatible with $\mc{M}$ such that
    \[\forall A\seq U\quad \lim_{i\rightarrow+\infty} g_i(A) = g(A).\]
\end{theorem}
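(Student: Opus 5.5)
Since $g$ is almost entropic, we first pick entropic vectors $h_i\in\Gent{U}$ with $h_i\to g$ coordinatewise, realized by random vectors $(\xi^{(i)}_u)_{u\in U}$. Compatibility of $g$ with $\mc{M}$ gives $g(A)=g(\cl_{\mc{M}}(A))$. Hence the conditional entropies
\[
c_i(A)=\Ent\bigl(\xi^{(i)}_{\cl_{\mc{M}}(A)}\,\big|\,\xi^{(i)}_A\bigr)=h_i(\cl_{\mc{M}}(A))-h_i(A)
\]
tend to $0$ for every $A\seq U$. The plan is to repair each $\xi^{(i)}$ by adjoining small-entropy correction variables, using Lemma~\ref{lemma:small-fix-of-conditional-entropy}, so that every $\xi_A$ determines $\xi_{\cl_{\mc{M}}(A)}$ exactly. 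The repair must change all joint entropies by only $o(1)$.

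Concretely, for each $i$ and each pair $(A,u)$ with $A\seq U$ and $u\in\cl_{\mc{M}}(A)\setminus A$, apply Lemma~\ref{lemma:small-fix-of-conditional-entropy} with $X=\xi^{(i)}_u$ and $Y=\xi^{(i)}_A$. This yields $R_{A,u}$ with $\Ent(\xi^{(i)}_u\mid \xi^{(i)}_A,R_{A,u})=0$ and $\Ent(R_{A,u})\le f(c_i(A))\to0$. The naive fix would attach $R_{A,u}$ to the coordinates in $A$, but this might alter the closure structure of other sets. The cleaner choice is to attach the correction to \emph{every} coordinate. Let $R^{(i)}$ be the tuple of all $R_{A,u}$, and define new variables $\zeta^{(i)}_v=(\xi^{(i)}_v,R^{(i)})$ for all $v\in U$.

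We then check compatibility. For nonempty $A$, the vector $\zeta_A$ contains $\xi_A$ and $R_{A,u}$ for all $u\in\cl_{\mc{M}}(A)\setminus A$, so it determines $\xi_{\cl_{\mc{M}}(A)}$ and hence $\zeta_{\cl_{\mc{M}}(A)}$. Thus $\Ent(\zeta_A)=\Ent(\zeta_{\cl_{\mc{M}}(A)})$. The empty set needs separate care, since $\zeta_\emptyset$ is trivial while $\cl_{\mc{M}}(\emptyset)=\bigcap_{B\in\mc{M}}B$ may be nonempty. For $u\in\cl_{\mc{M}}(\emptyset)$ we have $g(\{u\})=g(\emptyset)=0$, so $h_i(\{u\})\to0$. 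We replace these coordinates by constants, i.e.\ set $\zeta_u$ constant for $u\in\cl_{\mc{M}}(\emptyset)$. Any $A$ then carries the same information as $A$ together with these constants, and the closure identity also holds for $A=\emptyset$. One must recheck the $R_{A,u}$ construction after this replacement; alternatively, first replace $\xi_u$ by a constant for $u\in\cl_{\mc{M}}(\emptyset)$, which changes every $h_i(A)$ by at most $\sum_{u}h_i(\{u\})\to0$ by subadditivity, and then perform the repair.

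For the entropy estimate, let $\rho_i=\Ent(R^{(i)})\le\sum_{A,u}f(c_i(A))\to0$. For nonempty $A$, the chain rule gives $\Ent(\xi_A)\le\Ent(\zeta_A)\le\Ent(\xi_A)+\rho_i$. Hence the entropic vectors $g_i(A)=\Ent(\zeta^{(i)}_A)$ satisfy $|g_i(A)-h_i(A)|\le\rho_i$ plus the small constant-replacement error, so $g_i\to g$. They are compatible with $\mc{M}$ by construction. The main obstacle is that bookkeeping: the repairs must not break compatibility for other sets, and the empty-set and minimal-closure case must be handled. Attaching the common $R^{(i)}$ to every coordinate, rather than only to the coordinates in $A$, is what makes the construction work.
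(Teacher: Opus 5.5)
Your proposal is correct and follows essentially the paper's proof: approximate $g$ by entropic vectors, use Lemma~\ref{lemma:small-fix-of-conditional-entropy} to build small-entropy correctors, attach the whole tuple of correctors to every coordinate outside $\cl_{\mc{M}}(\emptyset)$, and make the coordinates in $\cl_{\mc{M}}(\emptyset)$ constant. The differences are cosmetic. You correct coordinate by coordinate ($X=\xi_u$) rather than with $X=\xi_{\cl_{\mc{M}}(A)}$, and you make $\xi_{\cl_{\mc{M}}(\emptyset)}$ constant before the repair, whereas the paper instead stores $\xi_{\cl_{\mc{M}}(\emptyset)}$ inside the common tuple $T$. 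As you indicate, the coordinates in $\cl_{\mc{M}}(\emptyset)$ must not carry $R^{(i)}$, or the identity fails at $A=\emptyset$.
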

\begin{proof}
    We first approximate $g$ by an entropic vector, then use Lemma~\ref{lemma:small-fix-of-conditional-entropy} to enforce compatibility at an entropy cost tending to zero.

    Fix $\epsilon > 0$. Since $g$ is an almost entropic vector, there is a finite-support random vector $\xi_U = (\xi_u)_{u\in U}$ such that
    \[\forall A\seq U\quad |g(A) - \Ent(\xi_A)| < \epsilon.\]

    Since $g$ is compatible with $\mc{M}$, for every $A\seq U$ we have
    \[|\Ent(\xi_{\cl_{\mc{M}}(A)})-\Ent(\xi_{A})| \le |\Ent(\xi_{\cl_{\mc{M}}(A)})-g(\cl_{\mc{M}}(A))| + |g(\cl_{\mc{M}}(A)) - g(A)| + |g(A)-\Ent(\xi_{A})| \le 2\epsilon.\]

    For every $A\seq U$ with $A\not\subseteq\cl_{\mc{M}}(\emptyset)$, take $R_A$ from Lemma~\ref{lemma:small-fix-of-conditional-entropy} with $X=\xi_{\cl_{\mc{M}}(A)}$ and $Y=\xi_A$. For every $A\subseteq\cl_{\mc{M}}(\emptyset)$, take $R_{A}=(\xi_u)_{u\in A}$.
    Let $T=(R_A)_{A\seq U}$.
    Take $\wt{\xi}_U=(\wt{\xi}_u)_{u\in U}$, where
    \[\wt{\xi}_u=\begin{cases}\text{constant},\quad&\text{if }u\in\cl_{\mc{M}}(\emptyset);\\(\xi_u,T),\quad&\text{otherwise}.\end{cases}\]

    For every $A\seq U$ with $A\not\subseteq\cl_{\mc{M}}(\emptyset)$, by the properties of $R_A$, $\Ent(\xi_{\cl_{\mc{M}}(A)}|\wt{\xi}_A)=0$, so
    \[\Ent(\wt{\xi}_A)=\Ent(\wt{\xi}_{\cl_{\mc{M}}(A)}).\]
    For every $A\subseteq\cl_{\mc{M}}(\emptyset)$,
    \[\Ent(\wt{\xi}_{A})=0=\Ent(\wt{\xi}_{\cl_{\mc{M}}(A)}).\]
    Thus $(\Ent(\wt{\xi}_A))_{A\seq U}$ is an entropic vector compatible with $\mc{M}$.

    Moreover,
    \[\forall A\seq U\quad |\Ent(\xi_A) - \Ent(\wt{\xi}_A)|\le \Ent(T) + \Ent(\xi_{\cl_{\mc{M}}(\emptyset)})\le \sum_{A\seq U}\Ent(R_A) + \Ent(\xi_{\cl_{\mc{M}}(\emptyset)}).\]
    The right-hand side tends to zero as $\epsilon$ tends to zero. Hence
    \[\forall A\seq U\quad |g(A) - \Ent(\wt{\xi}_A)| < \epsilon',\]
    where $\epsilon'\rightarrow 0$ as $\epsilon \rightarrow 0$, which completes the proof.
\end{proof}

The main tool for proving the lower bound $\alpha(\mc{M})\ge\eta(\mc{M})$ is the fact that almost entropic vectors can be approximated by entropic vectors induced by a particularly regular class of random vectors, called \emph{quasi-uniform} random vectors.

\begin{definition}
    Let $U$ be a finite set. A finite-support random vector $\xi_U=(\xi_u)_{u\in U}$ is called quasi-uniform if, for every $A\seq U$, the random vector $\xi_A=(\xi_u)_{u\in A}$ is uniformly distributed on its support.
\end{definition}

The following theorem adapts the approximation in~\cite[Theorem 16.22]{2008Yeung} to preserve compatibility with $\mc{M}$; we give the construction directly using sequences of prescribed frequencies.
\begin{theorem}\label{thrm:approximation-with-quasi-uniform}
    Let $\mc{M}$ be a closure system on $U$.
    Let $g\in \clGent{U}$ be a non-zero almost entropic vector compatible with $\mc{M}$. Then, for every $\epsilon>0$, there exist a constant $r > 0$ and a quasi-uniform random vector $\eta_U=(\eta_u)_{u\in U}$ compatible with $\mc{M}$ such that
    \[\forall A\seq U\quad \lbr{\frac{1}{r}\cdot \Ent(\eta_A) - g(A)} < \epsilon.\]
\end{theorem}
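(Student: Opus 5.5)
First, I will use Theorem~\ref{thrm:approximation-with-compatible-with-M} to replace $g$ by an \emph{entropic} vector compatible with $\mc{M}$. Fix $\epsilon>0$. The theorem gives a finite-support random vector $\xi_U$ with $\Ent(\xi_A)=\Ent(\xi_{\cl_{\mc{M}}(A)})$ for all $A$ and $|\Ent(\xi_A)-g(A)|<\epsilon/2$. Since entropy compatibility means $\Ent(\xi_{\cl_{\mc{M}}(A)}\mid \xi_A)=0$, on the support of $\xi_U$ the value of $\xi_A$ determines $\xi_{\cl_{\mc{M}}(A)}$ by some function $\phi_A$. This functional-dependence property is what I will preserve in the quasi-uniform construction. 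It suffices to produce a quasi-uniform $\eta_U$ whose support lies inside $\supp(\xi_U)^{\ell}$, taken coordinatewise, with $\frac{1}{\ell}\Ent(\eta_A)$ within $\epsilon/2$ of $\Ent(\xi_A)$. Then I can take $r=\ell$. Indeed, if every realization of $\eta_U$ is a sequence $(x^{(1)},\dots,x^{(\ell)})$ of elements of $\supp(\xi_U)$, then $\eta_A$ determines $\eta_{\cl_{\mc{M}}(A)}$ by applying $\phi_A$ letterwise. Hence $\Ent(\eta_A)=\Ent(\eta_{\cl_{\mc{M}}(A)})$, and compatibility is automatic.

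Next comes the construction via types, following \cite[Theorem 16.22]{2008Yeung}. Let $p$ be the distribution of $\xi_U$ on the finite set $S=\supp(\xi_U)$. For large $\ell$, choose a rational distribution $q$ on $S$ with denominator $\ell$ such that $\ell q(x)\in\Z_{\ge0}$ and $\|q-p\|_\infty$ is small. By continuity of entropy on the finite simplex, the marginal entropies $\Ent_q(A)$ are within $\epsilon/4$ of $\Ent_p(A)$. Let $\eta_U$ be uniform on the type class $T_q\seq S^\ell$, consisting of the sequences in which each $x\in S$ occurs exactly $\ell q(x)$ times. Then $\eta_A$ is the coordinatewise projection, a sequence in $\pi_A(S)^\ell$. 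Its support is exactly the type class of the marginal $q_A$. The symmetric group $S_\ell$ acts transitively on $T_q$ and on $T_{q_A}$, and the projection is equivariant. Consequently every fiber has the same size, so $\eta_A$ is uniform on $T_{q_A}$. This gives quasi-uniformity.

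The final step is the entropy estimate. We have $\Ent(\eta_A)=\log_2|T_{q_A}|=\log_2\binom{\ell}{(\ell q_A(y))_y}$. The standard multinomial bounds give $\ell\,\Ent_{q}(A)-|S|\log_2(\ell+1)\le\Ent(\eta_A)\le \ell\,\Ent_q(A)$. Dividing by $r=\ell$ and letting $\ell\to\infty$ (with $q\to p$), we get $|\frac1r\Ent(\eta_A)-g(A)|<\epsilon$ for every $A$ at once, since there are finitely many $A$. The hypothesis that $g$ is non-zero is only needed so that the construction is nondegenerate, with $r>0$ meaningful. The $\mc{M}$-closed condition needs care in one place: the approximating $q$ must be supported inside $S$, not just near $p$. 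Otherwise letterwise determination fails. This is why I would take $q$ with $\supp(q)\seq S$ by rounding only the atoms of $p$. I expect this point, keeping the support inside $S$ while still approximating, to be the main (though mild) obstacle.
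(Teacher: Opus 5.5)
Your proposal is correct and follows essentially the same route as the paper. Both reduce to an entropic compatible $\xi_U$ via Theorem~\ref{thrm:approximation-with-compatible-with-M}, take $\eta_U$ uniform on a type class of sequences over $\supp(\xi_U)$, obtain quasi-uniformity from the permutation action, and estimate $\frac{1}{r}\Ent(\eta_A)$ by Stirling (the paper) or the standard multinomial bounds (you). The paper rounds once to positive rationals on the same support and uses multiples of a common denominator, while you round with denominator $\ell$ inside $S$. Your letterwise functional-dependence argument for compatibility is a more explicit justification of the step the paper states via equality of support sizes.
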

\begin{proof}
    By Theorem~\ref{thrm:approximation-with-compatible-with-M}, it suffices to consider an entropic vector $g$ realized by a finite-support random vector $\xi_U=(\xi_u)_{u\in U}$. We may assume that each $\xi_u$ takes values in $[n]$ for some $n\ge2$. Write $p_U(G)=\P(\xi_U=G)>0$ for $G\in\supp(\xi_U)$. Continuity of $-p\log_2p$ allows us to approximate these probabilities by positive rational probabilities on the same support, changing all marginal entropies arbitrarily little and preserving compatibility. We therefore assume from now on that all $p_U(G)$ are rational.

    Choose $q\ge 1$ such that, for every $G\in\supp(\xi_U)$, the number $q\cdot p_U(G)$ is an integer.

    Take an arbitrary integer $m\ge 1$. Let $r=q\cdot m$. Let
    \[\mc{F}=\fbrc{F=(F_1,\ldots,F_r)\in \br{[n]^U}^r}{\forall G\in \supp(\xi_U)\ \lbr{\fbrc{i\in[r]}{F_i = G}}=p_U(G)\cdot r},\]
    that is, the set of sequences of length $r$ of elements of $[n]^U$ such that each $G\in \supp(\xi_U)$ occurs exactly $p_U(G)\cdot r$ times.

     For $A\seq U$ and $G\in\supp(\xi_A)\seq [n]^A$, set
    $p_{A}(G)=\P(\xi_A = G)$. Then
    \begin{equation}\label{eq:formula-p-A-G}
    \forall A\seq U,\ \forall G\in\supp(\xi_A)\quad p_{A}(G) = \sum_{\substack{G'\in \supp(\xi_U)\\ \pi_A(G')=G}}p_U(G').
    \end{equation}

    For $A\seq U$, set
    \[\pi_A(\mc{F}) = \fbrc{(\pi_A(F_i))_{i\in[r]}}{F\in\mc{F}}\seq \br{[n]^A}^r.\]
    Then, by \eqref{eq:formula-p-A-G},
    \begin{equation}\label{eq:formula-for-pi-A-F}
    \pi_A(\mc{F})=\fbrc{F=(F_1,\ldots,F_r)\in \br{[n]^A}^r}{\forall G\in\supp(\xi_A)\ \lbr{\fbrc{i\in[r]}{F_i = G}}=p_{A}(G)\cdot r}.
    \end{equation}

    Since $\xi$ is compatible with $\mc{M}$, for every $A\seq U$ we have $\lbr{\supp(\xi_A)}=\lbr{\supp(\xi_{\cl_{\mc{M}}(A)})}$, and hence
    \begin{equation}\label{eq:quasi-uniform-approximation-mcF-is-compatible-with-M}
        \forall A\seq U\quad \lbr{\pi_A(\mc{F})}=\lbr{\pi_{\cl_{\mc{M}}(A)}(\mc{F})}.
    \end{equation}

    By~\eqref{eq:formula-for-pi-A-F}, any two elements $X,Y\in\pi_A(\mc{F})$ differ by a permutation of the $r$ positions. The same permutation gives a bijection between the fibres $\mc{F}[X]$ and $\mc{F}[Y]$, where $\mc{F}[X]=\fbrc{F\in\mc{F}}{\pi_A(F)=X}$. Hence
    \begin{equation}\label{eq:quasi-uniform-approximation-equal-U-extension}\forall A\seq U\ \forall X,Y\in \pi_A(\mc{F})\quad \lbr{\mc{F}[X]}=\lbr{\mc{F}[Y]}.\end{equation}

    Take a uniformly distributed random vector $\eta_U=(\eta_u)_{u\in U}$ supported on $\mc{F}$, viewed as a subset of $[n^r]^U$, that is, on
    \[\wt{\mc{F}}=\fbrc{\br{\br{\pi_u(F_i)}_{i\in[r]}}_{u\in U}}{F\in \mc{F}}.\]

    From \eqref{eq:quasi-uniform-approximation-equal-U-extension}, it follows that $\eta$ is quasi-uniform. From \eqref{eq:quasi-uniform-approximation-mcF-is-compatible-with-M}, it follows that $\eta$ is compatible with $\mc{M}$.

    By Stirling's formula, $n!\sim \sqrt{2\pi n}\br{\frac{n}{e}}^n$, and hence the entropy of $\eta_A$ is equal to
    \begin{gather*}
        \forall A\seq U\quad \ln(2)\cdot\Ent(\eta_A)=\ln(\lbr{\pi_A(\mc{F})}) = \ln\br{\frac{r!}{\prod_{G\in \supp(\xi_A)}(p_A(G)\cdot r)!}}\\
        =\ln(r!) - \sum_{G\in \supp(\xi_A)}\ln((p_A(G)\cdot r)!)\\
        =r\ln(r) - r - \sum_{G\in \supp(\xi_A)}\bigg((p_A(G)\cdot r)\ln(p_A(G)\cdot r)
        - (p_A(G)\cdot r)\bigg) + O(\ln(r))\\
        =-\sum_{G\in \supp(\xi_A)}(p_A(G)\cdot r)\ln(p_A(G)) + O(\ln(r))\\
        =r\cdot\ln(2)\Ent(\xi_A) + O(\ln(r)).
    \end{gather*}

    Hence
    \[\forall A\seq U\quad \lbr{\frac{1}{r}\Ent(\eta_A)-\Ent(\xi_A)}\le O\br{\frac{\ln(r)}{r}}.\]
    For sufficiently large $r$, the right-hand side is smaller than $\epsilon$, which gives the required claim.
\end{proof}

The support of a quasi-uniform random vector compatible with $\mc{M}$ is an $\mc{M}$-system in which all fibres of each projection have the same size. This gives the following approximation.
\begin{theorem}\label{thrm:approximation-by-codes}
    Let $\mc{M}$ be a closure system on $U$.
    Let $g\in \clGent{U}$ be a non-zero almost entropic vector compatible with $\mc{M}$. Then, for every $\epsilon>0$, there exist $n \ge 2$ and an $\mc{M}$-system $\mc{F}\seq [n]^U$ such that
    \[\forall A\seq U\quad |c\cdot \log_n(\lbr{\pi_{A}(\mc{F})}) - g(A)| \le \epsilon,\]
    where $c=\max_{u\in U}g(\fbr{u})$.
    Moreover,
    \begin{equation}\label{eq:quasi-uniform-property-mcF}
        \forall A\seq U,\ \forall X\in \pi_A(\mc{F})\quad \lbr{\mc{F}[X]} = \frac{\lbr{\mc{F}}}{\lbr{\pi_A(\mc{F})}},
    \end{equation}
    where, for $A\seq U$ and $X\in\pi_A(\mc F)$, $\mc{F}[X]=\fbrc{F\in\mc{F}}{\pi_A(F)=X}$.
\end{theorem}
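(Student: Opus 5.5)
The plan is to apply Theorem~\ref{thrm:approximation-with-quasi-uniform} with a small auxiliary accuracy $\epsilon'>0$, take $\mc{F}$ to be the support of the resulting quasi-uniform vector after relabelling its values into $[n]$, and then check the three claims: the fibre identity~\eqref{eq:quasi-uniform-property-mcF}, the $\mc{M}$-system property, and the estimate on $\log_n$ of the projections.

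First note that $c>0$. Indeed, $g$ is a polymatroid by Proposition~\ref{prop:almost-entropic-vector-is-polymatroid}, so submodularity and $g(\emptyset)=0$ give $g(A)\le\sum_{u\in A}g(\fbr{u})$ for every $A\seq U$. If $c$ were $0$, this would force $g\equiv0$, contradicting the hypothesis that $g$ is non-zero.

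Fix $\epsilon'>0$. Theorem~\ref{thrm:approximation-with-quasi-uniform} gives $r>0$ and a quasi-uniform $\eta_U$ compatible with $\mc{M}$ such that $|\Ent(\eta_A)/r-g(A)|<\epsilon'$ for all $A$.
\begin{itemize}
\item \emph{Relabelling.} Put $n=\max_{u\in U}\lbr{\supp(\eta_u)}$. Identify each $\supp(\eta_u)$ injectively with a subset of $[n]$, and let $\mc{F}\seq[n]^U$ be the image of $\supp(\eta_U)$. This relabelling does not change any entropy or any projection size.
\item \emph{Uniformity.} By quasi-uniformity, $\eta_A$ is uniform on $\pi_A(\mc{F})$ for every $A$. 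Hence $\Ent(\eta_A)=\log_2\lbr{\pi_A(\mc{F})}$.
\item \emph{Fibre identity.} Since $\eta_U$ is uniform on $\mc{F}$, we have $\P(\eta_A=X)=\lbr{\mc{F}[X]}/\lbr{\mc{F}}$. By uniformity of $\eta_A$, this probability also equals $1/\lbr{\pi_A(\mc{F})}$, which is exactly~\eqref{eq:quasi-uniform-property-mcF}.
\item \emph{$\mc{M}$-system.} Compatibility gives $\Ent(\eta_A)=\Ent(\eta_{\cl_{\mc{M}}(A)})$, hence $\lbr{\pi_A(\mc{F})}=\lbr{\pi_{\cl_{\mc{M}}(A)}(\mc{F})}$ for all $A$. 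Proposition~\ref{prop:M-system-via-closure} then shows that $\mc{F}$ is an $\mc{M}$-system.
\item \emph{$n\ge2$.} Since $\log_2 n=\max_u\Ent(\eta_u)$, we get $\log_2 n>r(c-\epsilon')>0$ once $\epsilon'<c$. In particular $n\ge2$.
\end{itemize}

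For the quantitative estimate, write
\[
c\log_n\lbr{\pi_A(\mc{F})}=c\,\frac{\Ent(\eta_A)/r}{\max_u\Ent(\eta_u)/r}.
\]
The numerator lies in $(g(A)-\epsilon',\,g(A)+\epsilon')$. The denominator lies in $(c-\epsilon',\,c+\epsilon')$, because each $\Ent(\eta_u)/r$ is within $\epsilon'$ of $g(\fbr{u})$ and the maximum of the $g(\fbr{u})$ is $c$. Using $0\le g(A)\le G:=\max_{A}g(A)$, the ratio then differs from $g(A)/c$ by at most roughly $\epsilon'(c+G)/(c(c-\epsilon'))$. After multiplying by $c$, the total error is $O(\epsilon'(c+G)/c)$. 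Choosing $\epsilon'$ small enough in terms of $c$, $G$ and $\epsilon$ makes this at most $\epsilon$.

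The only step requiring care is this error bookkeeping: the normalisation by $\log_2 n$ is itself only approximate, so we need $c$ bounded away from $0$, which is why $c>0$ was established first. Every other step is a direct translation of quasi-uniformity and compatibility into statements about the support.
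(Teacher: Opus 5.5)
Your proposal is correct and is essentially the paper's proof: apply Theorem~\ref{thrm:approximation-with-quasi-uniform}, take $\mc{F}$ to be the support of the resulting quasi-uniform vector with $n$ the largest marginal support size, and read off the fibre identity and the $\mc{M}$-system property from quasi-uniformity and compatibility. The only difference is the error bookkeeping. The paper bounds $\lbr{c/\log_2(n)-1/r}\cdot\Ent(\eta_A)$ using $\Ent(\eta_A)\le\lbr{U}\log_2(n)$, which gives the explicit choice $\epsilon'=\epsilon/(\lbr{U}+1)$ rather than your ratio estimate, whose constants depend on $c$ and $\max_A g(A)$.
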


\begin{proof}
    Let $\epsilon'=\frac{\epsilon}{|U|+1}$. By Theorem~\ref{thrm:approximation-with-quasi-uniform}, there exist a real $r > 0$ and a quasi-uniform random vector $\eta_U=(\eta_u)_{u\in U}$ compatible with $\mc{M}$ such that
    \begin{equation}\label{eq:quasi-uniform-distance-small}
    \forall A\seq U\quad \lbr{\frac{1}{r}\Ent(\eta_A) - g(A)}\le \epsilon'.
    \end{equation}
    Let $n=\max_{u\in U} 2^{\Ent(\eta_u)}$. Since $g$ is non-zero, we may assume that $n\ge 2$. We may also assume that the support of each $\eta_u$ is a subset of $[n]$.

    By \eqref{eq:quasi-uniform-distance-small},
    \[\lbr{\frac{\log_2(n)}{r}-c}<\epsilon'.\]

    Then
    \[\forall A\seq U\quad \lbr{\frac{c}{\log_2(n)}\Ent(\eta_A) - \frac{1}{r}\Ent(\eta_A)} \le \Ent(\eta_A)\cdot\frac{\epsilon'}{\log_2(n)} \le \lbr{U}\cdot \epsilon'.\]

    Hence
    \begin{equation}\label{eq:updated-c-is-small-distance}
        \forall A\seq U\quad \lbr{\frac{c}{\log_2(n)}\Ent(\eta_A) - g(A)}\le (\lbr{U}+1)\epsilon'.
    \end{equation}

    Let $\mc{F}=\supp(\eta_U)\seq [n]^U$. Since $\eta_U$ is compatible with $\mc{M}$, the family $\mc{F}$ is an $\mc{M}$-system. Since $\eta_U$ is quasi-uniform, \eqref{eq:quasi-uniform-property-mcF} holds. By \eqref{eq:updated-c-is-small-distance},
    \[\forall A\seq U\quad \lbr{c\cdot\log_n(\lbr{\pi_A(\mc{F})}) - g(A)}\le (\lbr{U}+1)\epsilon'=\epsilon.\]
\end{proof}

\begin{corollary}\label{corollary:alpha-equals-eta}
    Let $\mc{M}$ be a closure system on $U$. Then
    \[\alpha(\mc{M})=\eta(\mc{M}).\]
\end{corollary}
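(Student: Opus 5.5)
The upper bound $\alpha(\mc{M})\le\eta(\mc{M})$ is Theorem~\ref{thrm:alpha-le-eta}, so only $\alpha(\mc{M})\ge\eta(\mc{M})$ needs proof. Let $g$ attain the maximum in the definition of $\eta(\mc{M})$: $g\in\clGent{U}$, $g(A)\le\lbr{A}$ for all $A$, and $g$ is compatible with $\mc{M}$. If $g=0$, then $\eta(\mc{M})=0\le\alpha(\mc{M})$, since $m(n,\mc{M})\ge1$ (a single vector is an $\mc{M}$-system because $U\in\mc{M}$). So assume $g\ne0$, and set $c=\max_{u\in U}g(\fbr{u})$. The constraint $g(\fbr{u})\le 1$ gives $0<c\le1$.

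Fix $\epsilon>0$ and apply Theorem~\ref{thrm:approximation-by-codes}. This yields $n\ge2$ and an $\mc{M}$-system $\mc{F}\seq[n]^U$ with
\[
    \lbr{c\cdot\log_n\lbr{\mc{F}}-g(U)}\le\epsilon,
\]
where we take $A=U$ and use $\pi_U(\mc{F})=\mc{F}$. Hence
\[
    \log_n m(n,\mc{M})\ge\log_n\lbr{\mc{F}}\ge\frac{g(U)-\epsilon}{c}\ge g(U)-\epsilon,
\]
where the last step uses $c\le1$; it is valid when $g(U)-\epsilon\ge0$, and otherwise the bound is trivial.

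By Theorem~\ref{thrm:alpha-for-mM-exists}, $m(n,\mc{M})\le n^{\alpha(\mc{M})}$ for every $n\ge1$. Therefore $\alpha(\mc{M})\ge\log_n m(n,\mc{M})\ge\eta(\mc{M})-\epsilon$. Letting $\epsilon\to0$ gives $\alpha(\mc{M})\ge\eta(\mc{M})$.

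The one point needing care is the normalizing factor $c$: the code produced by Theorem~\ref{thrm:approximation-by-codes} reproduces $g/c$ rather than $g$. This is harmless here because $c\le1$ follows from the constraint $g(A)\le\lbr{A}$, so the rescaling can only increase the lower bound. Beyond this, the argument is a direct assembly of the results already proved.
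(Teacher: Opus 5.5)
Your proposal is correct and follows the paper's proof: both take the upper bound from Theorem~\ref{thrm:alpha-le-eta}, then apply Theorem~\ref{thrm:approximation-by-codes} to a maximizer $g$ and use $0<c\le 1$ to get $\log_n\lbr{\mc{F}}\ge g(U)-\epsilon$. You also make explicit the step the paper leaves implicit, namely that $m(n,\mc{M})\le n^{\alpha(\mc{M})}$ for every $n$ (Theorem~\ref{thrm:alpha-for-mM-exists}) turns this single $n$ into a bound on $\alpha(\mc{M})$.
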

\begin{proof}
    The upper bound was proved in Theorem~\ref{thrm:alpha-le-eta}. We prove the lower bound.

    Suppose that $\eta(\mc{M})$ is attained at $g:2^U\to\R$. We may assume that $g(U) > 0$, since otherwise the lower bound is trivial. Then $c=\max_{u\in U}g(\fbr{u}) > 0$ and $c\le 1$. Fix $0 < \epsilon < g(U)$. Since $g$ is compatible with $\mc{M}$, by Theorem~\ref{thrm:approximation-by-codes} there exist $n\ge 2$ and an $\mc{M}$-system $\mc{F}\seq [n]^U$ such that
    \[\log_n(\lbr{\mc{F}}) \ge \frac{g(U) - \epsilon}{c} \ge g(U) - \epsilon = \eta(\mc{M}) - \epsilon.\]
    Since $\epsilon > 0$ can be chosen arbitrarily small, we obtain
    \[\alpha(\mc{M}) \ge \eta(\mc{M}).\]
\end{proof}

\section{Lower bound via foldings}\label{sec:lower-bound-via-foldings}

\begin{definition}\label{def:folding-closure-systems}
    Let $\mc{M}$ be a closure system on $U$ and let $\mc{N}$ be a closure system on $V$. A map $f:U\to 2^V$ is called a \emph{folding of $\mc{N}$ into $\mc{M}$} if
    \begin{enumerate}
        \item $\cl_{\mc{N}}(f(U))=V$; \label{eq:folding-cond-U-eq-V}
        \item for every $A\seq U$, $\cl_{\mc{N}}(f(\cl_{\mc{M}}(A)))=\cl_{\mc{N}}(f(A))$;\label{eq:folding-preserve-cl}
    \end{enumerate}
    where, for $A\seq U$, $f(A)=\bigcup_{u\in A}f(u)$.

    The rank of the folding is
    \[
        \rk(f)=\max\br{1,\max_{u\in U}\lbr{f(u)}}.
    \]
\end{definition}

Folding constructions transfer lower bounds from one closure system to another.
\begin{theorem}\label{thrm:lower-bound-on-m-via-folding}
    Let $\mc{M}$ be a closure system on $U$, let $\mc{N}$ be a closure system on $V$, and let $f:U\to 2^V$ be a folding of $\mc{N}$ into $\mc{M}$. Then
    \[
        \forall n\ge 1\quad m(n^{\rk(f)},\mc{M})\ge m(n,\mc{N}).
    \]
\end{theorem}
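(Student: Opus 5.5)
Given an $\mc{N}$-system $\mc{G}\seq[n]^V$ with $\lbr{\mc{G}}=m(n,\mc{N})$, the plan is to build an $\mc{M}$-system $\mc{F}\seq[n^{\rk(f)}]^U$ of the same size. Each $G\in\mc{G}$ is encoded by setting coordinate $u$ equal to the tuple $\pi_{f(u)}(G)$. Fix for each $u$ an ordering of $f(u)$. The tuple $(\pi_v(G))_{v\in f(u)}\in[n]^{\lbr{f(u)}}$ injects into $[n^{\rk(f)}]$, since $\lbr{f(u)}\le\rk(f)$ (pad with a fixed symbol; if $f(u)=\emptyset$ use a constant). Let $\Phi(G)\in[n^{\rk(f)}]^U$ be the resulting vector and $\mc{F}=\Phi(\mc{G})$. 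By construction, for $G,G'\in\mc{G}$,
\[
\fbrc{u\in U}{\pi_u(\Phi(G))=\pi_u(\Phi(G'))}=\fbrc{u\in U}{f(u)\seq S(G,G')},
\]
where $S(G,G')=\fbrc{v\in V}{\pi_v(G)=\pi_v(G')}\in\mc{N}$.

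First I check that $\Phi$ is injective. Suppose $\Phi(G)=\Phi(G')$. Then $f(U)\seq S=S(G,G')$. Since $S\in\mc{N}$, we get $V=\cl_{\mc{N}}(f(U))\seq S$ by condition~\ref{eq:folding-cond-U-eq-V}. Hence $G=G'$, and so $\lbr{\mc{F}}=\lbr{\mc{G}}$.

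Next I show that $\mc{F}$ is an $\mc{M}$-system. Fix $G,G'$, put $S=S(G,G')\in\mc{N}$ and $A=\fbrc{u}{f(u)\seq S}$. It suffices to show $\cl_{\mc{M}}(A)\seq A$. Because $f(A)\seq S$ and $S$ is closed, $\cl_{\mc{N}}(f(A))\seq S$. Condition~\ref{eq:folding-preserve-cl} then gives
\[
f(\cl_{\mc{M}}(A))\seq\cl_{\mc{N}}(f(\cl_{\mc{M}}(A)))=\cl_{\mc{N}}(f(A))\seq S.
\]
So every $u\in\cl_{\mc{M}}(A)$ satisfies $f(u)\seq S$, that is, $u\in A$. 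Therefore $A=\cl_{\mc{M}}(A)\in\mc{M}$.

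The argument is short, and the main point needing care is the bookkeeping in the encoding. Tuples of differing lengths must inject into one common alphabet $[n^{\rk(f)}]$, and the $\max(1,\cdot)$ in the definition of $\rk(f)$ handles the degenerate case in which every $f(u)$ is empty. The substantive step is the closure computation above, which uses that each agreement set $S(G,G')$ is closed in $\mc{N}$.
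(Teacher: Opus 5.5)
Your proposal is correct and follows essentially the same route as the paper's proof. Both encode coordinate $u$ injectively by $\pi_{f(u)}(G)$ in an alphabet of size $n^{\rk(f)}$. Both then use Condition~\ref{eq:folding-cond-U-eq-V} for distinctness, and Condition~\ref{eq:folding-preserve-cl} together with the closedness of the agreement set $S\in\mc{N}$ to show that $\fbrc{u\in U}{f(u)\seq S}$ equals its $\mc{M}$-closure.
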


\begin{proof}
    For fixed $n \ge 1$, take $\mc{F} \subseteq [n]^V$ such that $\mc{F}$ is an $\mc{N}$-system and $\lbr{\mc{F}} = m(n,\mc{N})$.

    Let $a = n^{\rk(f)}$. Then, for any $u \in U$, we have $a \ge n^{\lbr{f(u)}}\ge \lbr{\pi_{f(u)}(\mc{F})}$.

    For each $u\in U$, take an injective map $g_u:\pi_{f(u)}(\mc{F})\to[a]$.

    Construct the following family $\mc{G}$ on $[a]^U$: for each $F \in \mc{F}$, include $G_F\in [a]^U$ in $\mc{G}$, where
    \[
        \forall u \in U \quad \pi_u(G_F) = g_u(\pi_{f(u)}(F)).
    \]

    For distinct $F, F' \in \mc{F}$, define:
    \[
        A = \fbrc{u \in U}{\pi_u(G_F) = \pi_{u}(G_{F'})}, \quad X = \fbrc{v \in V}{\pi_v(F) = \pi_v(F')}.
    \]
    Then,
    \begin{equation}\label{eq:thrm-lower-bound-def-A-for-G}
        A = \fbrc{u \in U}{f(u) \subseteq X}.
    \end{equation}

    Since $\mc{F}$ is an $\mc{N}$-system, we have $X\in \mc{N}$. We know that $f(A)\seq X$, so $\cl_{\mc{N}}(f(A))\seq X$ and, by folding Condition~\ref{eq:folding-preserve-cl}, $f(\cl_{\mc{M}}(A))\seq X$. Therefore, by \eqref{eq:thrm-lower-bound-def-A-for-G}, we have $A=\cl_{\mc{M}}(A)$, and hence $\mc{G}$ is an $\mc{M}$-system. Moreover, by folding Condition~\ref{eq:folding-cond-U-eq-V}, $A\neq U$, since $\cl_{\mc{N}}(f(A))\seq X\in \mc{N}\setminus \fbr{V}$, and thus all $G_F$ are distinct. Hence,
    \[
        m(n^{\rk(f)}, \mc{M}) \ge m(n,\mc{N}).
    \]
\end{proof}

\begin{definition}\label{def:folding-rank}
    Define the \emph{folding rank} of $\mc{M}$ in $\mc{N}$ by
    \[
        \frk{\mc{M}}{\mc{N}}=\min\fbrc{\rk(f)}{f\text{ is a folding of }\mc{N}\text{ into }\mc{M}},
    \]
    with value $+\infty$ if no such folding exists.

    For a matroid $N$, we write
    $\frk{\mc{M}}{N}=\frk{\mc{M}}{\Flats(N)}$.
\end{definition}

Taking logarithms in Theorem~\ref{thrm:lower-bound-on-m-via-folding} gives the following corollary.

\begin{corollary}\label{corollary:folding-lower-bound-alpha}
    For closure systems $\mc{M}$ and $\mc{N}$,
    \[
        \alpha(\mc{M})\ge\frac{\alpha(\mc{N})}{\frk{\mc{M}}{\mc{N}}}.
    \]
\end{corollary}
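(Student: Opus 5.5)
The plan is to take logarithms in Theorem~\ref{thrm:lower-bound-on-m-via-folding} and then pass to the limit using Theorem~\ref{thrm:alpha-for-mM-exists}. First I dispose of the degenerate case. If no folding of $\mc{N}$ into $\mc{M}$ exists, then $\frk{\mc{M}}{\mc{N}}=+\infty$, and we interpret the right-hand side as $0$. In that case the inequality is trivial because $\alpha(\mc{M})\ge 0$, which holds since $m(n,\mc{M})\ge 1$.

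Otherwise, I fix a folding $f$ attaining the minimum, so that $\rk(f)=\frk{\mc{M}}{\mc{N}}=:t$. Here $t$ is a positive integer, since $\rk(f)\ge 1$ by definition. Theorem~\ref{thrm:lower-bound-on-m-via-folding} gives $m(n^{t},\mc{M})\ge m(n,\mc{N})$ for every $n\ge 1$. Taking $\log_{n^t}$ of both sides for $n\ge 2$ yields
\[
\log_{n^t} m(n^t,\mc{M})\ \ge\ \frac{1}{t}\log_n m(n,\mc{N}).
\]

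Now let $n\to\infty$. By Theorem~\ref{thrm:alpha-for-mM-exists}, the limit $\lim_{N\to\infty}\log_N m(N,\mc{M})=\alpha(\mc{M})$ exists, so it is also the limit along the subsequence $N=n^t$. The right-hand side converges to $\alpha(\mc{N})/t$. This gives $\alpha(\mc{M})\ge \alpha(\mc{N})/\frk{\mc{M}}{\mc{N}}$.

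There is no real obstacle here. The only points needing care are the infinite-rank convention and the remark that a subsequence of a convergent sequence has the same limit. Alternatively, one can avoid limits on the left entirely by using the bound $m(N,\mc{M})\le N^{\alpha(\mc{M})}$ from Theorem~\ref{thrm:alpha-for-mM-exists}. This gives $m(n,\mc{N})\le n^{t\alpha(\mc{M})}$ for every $n$, and hence $\alpha(\mc{N})\le t\,\alpha(\mc{M})$ directly.
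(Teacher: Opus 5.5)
Your proposal is correct and is essentially the paper's argument: the paper obtains the corollary simply by taking logarithms in Theorem~\ref{thrm:lower-bound-on-m-via-folding} and passing to the limit via Theorem~\ref{thrm:alpha-for-mM-exists}. Your handling of the case $\frk{\mc{M}}{\mc{N}}=+\infty$ (using $m(n,\mc{M})\ge 1$) and of the subsequence $n^t$ just makes explicit details the paper leaves implicit.
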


The simplest nontrivial closure systems with known exponent are $2^{[s]}$. Indeed, $m(n,2^{[s]})=n^s$ and $\alpha(2^{[s]})=s$. Folding rank in these closure systems admits the following concise description.
\begin{proposition}\label{prop:iff-folding-2k}
    Let $s\ge 0$ be an integer and let $\mc{M}$ be a closure system on $U$. Then
    \[
        \frk{\mc{M}}{2^{[s]}}
        =\min\fbrc{a\ge 1}{\exists A_1,\ldots,A_s\in\mc{M}\setminus\fbr{U},\
        \forall u\in U\ \lbr{\fbrc{i\in[s]}{u\notin A_i}}\le a}.
    \]
\end{proposition}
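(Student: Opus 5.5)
The plan is to set up an explicit correspondence between foldings $f$ of $2^{[s]}$ into $\mc{M}$ and tuples $(A_1,\ldots,A_s)$ of sets in $\mc{M}\setminus\fbr{U}$, under which $\lbr{f(u)}=\lbr{\fbrc{i\in[s]}{u\notin A_i}}$ for every $u\in U$. The closure operator of $2^{[s]}$ is the identity. So, by Definition~\ref{def:folding-closure-systems}, a map $f:U\to 2^{[s]}$ is a folding exactly when
\[
f(U)=[s]\quad\text{and}\quad f(\cl_{\mc{M}}(A))=f(A)\ \text{for every } A\seq U.
\]
The correspondence is $A_i=\fbrc{u\in U}{i\notin f(u)}$ in one direction and $f(u)=\fbrc{i\in[s]}{u\notin A_i}$ in the other.

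First, from a folding to sets. Given a folding $f$, put $A_i=\fbrc{u}{i\notin f(u)}$. Then $i\notin f(A_i)$. By the second folding condition, $i\notin f(\cl_{\mc{M}}(A_i))$, so every $u\in\cl_{\mc{M}}(A_i)$ satisfies $i\notin f(u)$. Hence $\cl_{\mc{M}}(A_i)\seq A_i$, and therefore $A_i\in\mc{M}$. Also $A_i\neq U$, because $i\in[s]=f(U)$. By construction $\lbr{\fbrc{i}{u\notin A_i}}=\lbr{f(u)}$. Hence $\max_u\lbr{\fbrc{i}{u\notin A_i}}\le\rk(f)$, and since $\rk(f)\ge1$, the right-hand minimum is at most $\frk{\mc{M}}{2^{[s]}}$.

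Second, from sets to a folding. Given $A_1,\ldots,A_s\in\mc{M}\setminus\fbr{U}$, define $f(u)=\fbrc{i}{u\notin A_i}$. Then $f(A)=\fbrc{i}{A\not\seq A_i}$. Since each $A_i\neq U$, some $u\notin A_i$, so $f(U)=[s]$. Since $A_i\in\mc{M}$, we have $A\seq A_i$ if and only if $\cl_{\mc{M}}(A)\seq A_i$, and this gives $f(\cl_{\mc{M}}(A))=f(A)$. So $f$ is a folding, and if every $u$ satisfies $\lbr{\fbrc{i}{u\notin A_i}}\le a$ with $a\ge1$, then $\rk(f)\le a$. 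This gives the reverse inequality.

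Finally, the degenerate cases. If no folding exists, then by the first direction no admissible tuple exists either (the two directions show the two existence problems are equivalent), so both sides equal $+\infty$. This happens, for instance, when $\mc{M}=\fbr{U}$ and $s\ge1$. For $s=0$ the empty tuple is admissible and the constant map $f\equiv\emptyset$ is a folding, so both sides equal $1$.

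There is no real obstacle. The only point needing care is the step $\cl_{\mc{M}}(A_i)\seq A_i$ in the first direction, which is exactly where the second folding condition is used.
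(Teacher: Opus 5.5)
Your proof is correct and follows essentially the same route as the paper. You use the same correspondence $A_i=\fbrc{u\in U}{i\notin f(u)}$ and $f(u)=\fbrc{i\in[s]}{u\notin A_i}$, with the second folding condition forcing $\cl_{\mc{M}}(A_i)=A_i$ and membership $A_i\in\mc{M}$ giving the converse.

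One harmless slip: in your degenerate-case paragraph, ``no folding'' implies ``no admissible tuple'' follows from the second direction (sets to folding), not the first. Your treatment of the $+\infty$ and $s=0$ cases, which the paper leaves implicit, is otherwise fine.
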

\begin{proof}
    Fix $a\ge 1$. Suppose that we have a folding $f: U \to 2^{[s]}$ with $\rk(f)\le a$. For all $i \in [s]$, define
    \[
        A_i = \fbrc{u}{i \not\in f(u)}.
    \]
    Since $f(U) = [s]$, we have $A_i \ne U$ for every $i$. Since $\cl_{2^{[s]}}(f(A_i))=f(A_i)$, from folding Condition~\ref{eq:folding-preserve-cl} we have
    \begin{equation}\label{eq:f-cl-equals-f-A}
    f(\cl_{\mc{M}}(A_i)) = f(A_i).
    \end{equation}
    If $A_i\neq \cl_{\mc{M}}(A_i)$, then there exists $u\in \cl_{\mc{M}}(A_i)$ such that $i\in f(u)$, which contradicts \eqref{eq:f-cl-equals-f-A}. Thus $A_i=\cl_{\mc{M}}(A_i)\in \mc{M}$ for every $i\in [s]$.

    For every $u\in U$, the definition of $A_i$ gives
    \[
        \lbr{\fbrc{i}{u\notin A_i}}=\lbr{f(u)}\le\rk(f)\le a.
    \]

    Conversely, suppose that we are given $A_1, \ldots, A_s \in \mc{M} \setminus \{U\}$. Define $f: U \to 2^{[s]}$ by
    \[
        \forall u\in U\quad f(u) = \fbrc{i}{u \not\in A_i}.
    \]
    Since each $A_i\neq U$, we have $f(U)=[s]$. Take $B\seq U$. Suppose that $f(B)\neq f(\cl_{\mc{M}}(B))$. Then there exists $i$ such that $B\seq A_i$ but $\cl_{\mc{M}}(B) \not \seq A_i$, which contradicts the fact that $\cl_{\mc{M}}(B)$ is the smallest element of $\mc{M}$ that contains $B$. Hence $f$ is indeed a folding of $2^{[s]}$ into $\mc{M}$. Moreover,
    \[
        \rk(f) = \max\br{1,\max_{u \in U} \lbr{f(u)}} = \max\br{1,\max_{u \in U} \lbr{\fbrc{i}{u \not\in A_i}}} \le a.
    \]
\end{proof}

As an immediate consequence, we obtain the following general lower bound.
\begin{corollary}\label{corollary:rank-lower-bound-M}
    Let $\mc{M}$ be a closure system on $U$. Let $k=\lbr{U}$ and let $r=\rk(\mc{M})\ge 1$. Then
    \[
        \alpha(\mc{M})\ge\frac{k}{k-r+1}.
    \]
\end{corollary}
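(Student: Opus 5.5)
We will apply Corollary~\ref{corollary:folding-lower-bound-alpha} with $\mc{N}=2^{[s]}$ for $s=k$, using $\alpha(2^{[k]})=k$. It then suffices to show that $\frk{\mc{M}}{2^{[k]}}\le k-r+1$, which gives
\[
    \alpha(\mc{M})\ge \frac{k}{\frk{\mc{M}}{2^{[k]}}}\ge\frac{k}{k-r+1}.
\]
By Proposition~\ref{prop:iff-folding-2k}, the bound on the folding rank reduces to a purely combinatorial task. We must find $A_1,\ldots,A_k\in\mc{M}\setminus\fbr{U}$ such that every $u\in U$ lies outside at most $k-r+1$ of them. Equivalently, every $u$ must belong to at least $r-1$ of the $A_i$.

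The natural choice is to index the sets by elements: for each $v\in U$, pick a proper member $A_v\in\mc{M}\setminus\fbr{U}$ containing as much of $U$ as possible. Since $\rk(\mc{M})=r$, every set $C$ with $\lbr{C}\le r-1$ satisfies $\cl_{\mc{M}}(C)\ne U$. So any $(r-1)$-subset of $U$ is contained in a proper member of $\mc{M}$.

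The cleanest way to obtain the covering is a cyclic arrangement. Identify $U$ with $\Z_k$. For each $i\in\Z_k$, let
\[
    C_i=\fbr{i,i+1,\ldots,i+r-2},
\]
a set of $r-1$ cyclically consecutive elements, and put $A_i=\cl_{\mc{M}}(C_i)$. Each $A_i$ lies in $\mc{M}\setminus\fbr{U}$ because $\lbr{C_i}=r-1<\rk(\mc{M})$. Each element $u$ lies in exactly $r-1$ of the windows $C_i$, namely those with $i\in\fbr{u-r+2,\ldots,u}$. Hence $u$ lies in at least $r-1$ of the sets $A_i$, and $\lbr{\fbrc{i}{u\notin A_i}}\le k-(r-1)=k-r+1$. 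Since $k-r+1\ge1$, Proposition~\ref{prop:iff-folding-2k} gives $\frk{\mc{M}}{2^{[k]}}\le k-r+1$.

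Two edge cases need checking. When $r=1$, the windows are empty and $A_i=\cl_{\mc{M}}(\emptyset)\neq U$; the bound reads $\alpha(\mc{M})\ge 1$, which is consistent. We should also confirm that $\alpha(2^{[k]})=k$ is used correctly inside Corollary~\ref{corollary:folding-lower-bound-alpha}; this holds since $m(n,2^{[k]})=n^k$. The main (mild) point of care is $r-1\le k$, so that the windows are well defined. This holds because $\rk(\mc{M})\le\lbr{U}$, as $U$ itself has closure $U$.
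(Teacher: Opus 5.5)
Your proof is correct and follows the paper's argument: closures of $(r-1)$-element sets are proper members of $\mc{M}$, and Proposition~\ref{prop:iff-folding-2k} combined with Corollary~\ref{corollary:folding-lower-bound-alpha} for $\mc{N}=2^{[s]}$ gives the bound. The only difference is the choice of $(r-1)$-sets. You use the $k$ cyclic windows, so $s=k$ and the folding rank is $k-r+1$, whereas the paper uses all $\binom{k}{r-1}$ subsets, with folding rank $\binom{k-1}{r-1}$; both families are regular and give the same ratio $k/(k-r+1)$.
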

\begin{proof}
    Let $s = \binom{k}{r-1}$. For each $B\in \binom{U}{r-1}$, that is, each subset of $U$ of size $r-1$, define
    \[
        A_B = \cl_{\mc{M}}(B).
    \]

    Since $\lbr{B}=r-1 < \rk(\mc{M})$, we have $A_B \ne U$ for all $B\in \binom{U}{r-1}$.

    Fix an element $u \in U$. Then
    \[
        \lbr{\fbrc{B \in \binom{U}{r-1}}{u \not\in A_B}} \le \lbr{\fbrc{B \in \binom{U}{r-1}}{u \not\in B}} = \binom{k - 1}{r - 1}.
    \]

    Therefore, by Proposition~\ref{prop:iff-folding-2k}, $\frk{\mc{M}}{2^{[s]}}\le \binom{k - 1}{r - 1}$. By Corollary~\ref{corollary:folding-lower-bound-alpha},
    \[
        \alpha(\mc{M})\ge \frac{\binom{k}{r - 1}}{\binom{k - 1}{r - 1}} = \frac{k}{k-r+1}.
    \]
\end{proof}

Corollaries~\ref{corollary:rank-lower-bound-M} and~\ref{corollary:alpha-kL-exists} yield the following bound, which was proved for $r=2$ by F\"uredi~\cite{1991Furedi}.

\begin{corollary}\label{corollary:rank-lower-bound-kL}
    Let $k\ge 1$ and $L\seq \fbr{0,1,\ldots,k-1}$. Let $r=\rk(k,L)\ge 1$. Then
    \[
        \alpha(k,L)\ge\frac{k}{k-r+1}.
    \]
\end{corollary}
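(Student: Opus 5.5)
The plan is to reduce the statement to Corollary~\ref{corollary:rank-lower-bound-M} by way of the max formula for $\alpha(k,L)$ in Corollary~\ref{corollary:alpha-kL-exists}. The whole argument amounts to choosing a single closure system that realizes the rank $\rk(k,L)$ and applying the closure-system bound to it.

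First, fix $k\ge 1$ and $L\seq\{0,\ldots,k-1\}$, and set $r=\rk(k,L)\ge 1$. By definition, $\rk(k,L)$ is the maximum of $\rk(\mc{M})$ over closure systems $\mc{M}$ on $[k]$ such that $\lbr{A}\in L$ for every $A\in\mc{M}\setminus\{[k]\}$. This is a finite set of candidates, so the maximum is attained. Choose such an $\mc{M}$ with $\rk(\mc{M})=r$.

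Next, apply Corollary~\ref{corollary:rank-lower-bound-M} to this $\mc{M}$, with $U=[k]$ and $\lbr{U}=k$. Its hypothesis $\rk(\mc{M})=r\ge 1$ holds, so it gives $\alpha(\mc{M})\ge \frac{k}{k-r+1}$. Since $r\le k$, the denominator is positive.

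Finally, Corollary~\ref{corollary:alpha-kL-exists} states that $\alpha(k,L)$ equals the maximum of $\alpha(\mc{M}')$ over exactly the same class of admissible closure systems. Our $\mc{M}$ belongs to that class, so $\alpha(k,L)\ge\alpha(\mc{M})\ge \frac{k}{k-r+1}$.

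There is no genuine obstacle here. The only point needing care is that the class of closure systems used to define $\rk(k,L)$ is the same as the one used in the max formula for $\alpha(k,L)$. By the definitions in the introduction and in Corollary~\ref{corollary:alpha-kL-exists}, both are closure systems on $[k]$ all of whose proper members have sizes in $L$.
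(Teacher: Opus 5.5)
Your proposal is correct and follows the paper's own argument: choose a closure system on $[k]$ that attains $\rk(k,L)$, apply Corollary~\ref{corollary:rank-lower-bound-M} to it, and then use the max formula for $\alpha(k,L)$ from Corollary~\ref{corollary:alpha-kL-exists}. The one point that needs checking is that both maxima range over the same class of closure systems, and you check it.
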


\section{Lower bound via almost entropic matroids}\label{sec:lower-bound-ae-matroids}

Corollaries~\ref{corollary:alpha-equals-eta} and~\ref{corollary:folding-lower-bound-alpha} give the following proposition.
\begin{proposition}\label{prop:eta-folding-lower-bound}
    Let $\mc{M}$ and $\mc{N}$ be closure systems on $U$ and $V$, respectively. Then
    \[
        \eta(\mc{M})\ge\frac{\eta(\mc{N})}{\frk{\mc{M}}{\mc{N}}}.
    \]
\end{proposition}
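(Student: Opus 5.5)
The statement follows by substituting Corollary~\ref{corollary:alpha-equals-eta} into Corollary~\ref{corollary:folding-lower-bound-alpha}. Applying Corollary~\ref{corollary:alpha-equals-eta} once to $\mc{M}$ and once to $\mc{N}$ gives $\alpha(\mc{M})=\eta(\mc{M})$ and $\alpha(\mc{N})=\eta(\mc{N})$. Corollary~\ref{corollary:folding-lower-bound-alpha} gives $\alpha(\mc{M})\ge \alpha(\mc{N})/\frk{\mc{M}}{\mc{N}}$. Substituting the two identities yields the claimed inequality.

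The only step requiring care is the justification of Corollary~\ref{corollary:folding-lower-bound-alpha}, together with the convention for $\frk{\mc{M}}{\mc{N}}=+\infty$.

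\emph{The infinite case.} If no folding exists, we read the right-hand side as $0$. The inequality is then trivial, since $\eta(\mc{M})\ge 0$: the zero function is an almost entropic vector, $1$-bounded, and compatible with every $\mc{M}$.

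\emph{The finite case.} Otherwise, fix a folding $f$ with $\rk(f)=d=\frk{\mc{M}}{\mc{N}}\ge 1$. Theorem~\ref{thrm:lower-bound-on-m-via-folding} gives $m(n^{d},\mc{M})\ge m(n,\mc{N})$ for all $n$. Taking $\log_{n^d}$ of both sides gives
\[
    \log_{n^d} m(n^d,\mc{M})\ge \frac{1}{d}\log_n m(n,\mc{N}).
\]
By Theorem~\ref{thrm:alpha-for-mM-exists}, $\alpha(\mc{M})$ and $\alpha(\mc{N})$ exist as genuine limits. Since $n^d\to\infty$ along a subsequence, letting $n\to\infty$ gives $\alpha(\mc{M})\ge \alpha(\mc{N})/d$.

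No real obstacle is expected: all the substance lies in Corollary~\ref{corollary:alpha-equals-eta}, which has already been established.
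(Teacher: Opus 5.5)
Your proposal is correct and is the paper's argument: the paper obtains this proposition by combining Corollary~\ref{corollary:alpha-equals-eta}, applied to both $\mathcal{M}$ and $\mathcal{N}$, with Corollary~\ref{corollary:folding-lower-bound-alpha}. Your treatment of the case with no folding and your limit argument for the folding corollary are sound, and they spell out details the paper leaves implicit.
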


We next record the characterization of almost entropic matroids needed for the Theorem~\ref{thrm:main-intro}.
\begin{proposition}\label{prop:matroid-rank-function-has-unique-solution}
    Let $N=(V,\rk_N)$ be a matroid and let $g$ be a $1$-polymatroid compatible with $\Flats(N)$ such that $g(V)=\rk(N)$. Then $g=\rk_N$.
\end{proposition}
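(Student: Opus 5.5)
We prove $g(A)=\rk_N(A)$ for every $A\seq V$ by two inequalities, after reducing to flats. Since $g$ and $\rk_N$ are both compatible with $\Flats(N)$, where $\cl_{\Flats(N)}$ is the usual matroid closure, it suffices to treat $A\in\Flats(N)$.

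\emph{Upper bound $g\le\rk_N$.} Let $A\seq V$ and let $B\seq A$ be a basis of $A$ in $N$, so $\lbr{B}=\rk_N(A)$ and $\cl_{\Flats(N)}(B)=\cl_{\Flats(N)}(A)$. Compatibility gives $g(A)=g(\cl(A))=g(\cl(B))=g(B)$. The $1$-polymatroid condition~\eqref{eq:polymatroid-one-bounded} then gives $g(B)\le\lbr{B}=\rk_N(A)$.

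\emph{Lower bound $g\ge\rk_N$.} Fix $A$ with basis $B$ as above, and extend $B$ to a basis $B\cup C$ of $N$, so $\lbr{C}=\rk(N)-\rk_N(A)$ and $\cl(B\cup C)=V$. By compatibility and submodularity (iterated subadditivity),
\[
\rk(N)=g(V)=g(B\cup C)\le g(B)+g(C)\le g(B)+\lbr{C}=g(A)+\rk(N)-\rk_N(A).
\]
The inequality $g(B\cup C)\le g(B)+g(C)$ follows from~\eqref{eq:polymatroid-submodular} with $g(\emptyset)=0$ and $g\ge0$, both of which hold for polymatroids. Rearranging gives $g(A)\ge\rk_N(A)$.

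The only point needing care is the identification of $\cl_{\Flats(N)}$ with matroid closure. By definition, $\Flats(\rk_N)$ consists of the sets $A$ with $\rk_N(A\cup\fbr{u})>\rk_N(A)$ for all $u\notin A$, which are exactly the matroid flats. The smallest flat containing a set $X$ is the standard closure $\fbrc{v}{\rk_N(X\cup\fbr{v})=\rk_N(X)}$, and this set contains, and has the same closure as, any basis of $X$. Given this, both inequalities are routine and no real obstacle is expected.
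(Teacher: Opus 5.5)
Your proof is correct and follows essentially the same route as the paper. The upper bound comes from compatibility plus $g(B)\le\lbr{B}$ for a spanning subset of $A$. The lower bound comes from extending to a spanning set of $V$ and applying subadditivity together with the $1$-bound. Your initial reduction to flats is never actually used, since both of your inequalities already work for arbitrary $A$.
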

\begin{proof}
    Fix $A \subseteq V$. Let $l = \rk_N(A)$. Choose $a_1, \ldots, a_l \in A$ such that $A \subseteq \cl_N(\{a_1, \ldots, a_l\})$. Then, from Condition~\eqref{eq:polymatroid-one-bounded} and compatibility, we have
    \[
        g(A) = g(\cl_N(A)) = g(\cl_N(\{a_1, \ldots, a_l\})) = g(\{a_1, \ldots, a_l\}) \le l = \rk_N(A).
    \]
    Let $s = \rk_N(V) - \rk_N(A)$. Choose $b_1, \ldots, b_s \in V$ such that $\cl_N(A \cup \{b_1, \ldots, b_s\}) = V$. Then
    \[
        \rk_N(V) = g(V) = g(A \cup \{b_1, \ldots, b_s\}) \le g(A) + g(\{b_1, \ldots, b_s\}) \le \rk_N(A) + s = \rk_N(V),
    \]
    which implies $g(A) = \rk_N(A)$.
\end{proof}

\begin{corollary}\label{corollary:matroid-eta-characterization}
    A matroid $N$ on $V$ is almost entropic if and only if
    \[
        \eta(\Flats(N))=\rk(N).
    \]
\end{corollary}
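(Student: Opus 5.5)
The plan is to prove the two directions separately. In both, $\rk_N$ is compared with the feasible set of $\eta(\Flats(N))$, and Proposition~\ref{prop:matroid-rank-function-has-unique-solution} does the main work.

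First, the direction ``almost entropic $\Rightarrow$ $\eta(\Flats(N))=\rk(N)$''. Assume $N$ is almost entropic. Then $\rk_N\in\clGent{V}$: by definition, for every $\epsilon>0$ some entropic vector is within $\epsilon$ of $\rk_N$ coordinatewise, so $\rk_N$ lies in the closure. Next check that $\rk_N$ is feasible for $\eta(\Flats(N))$. The bound $\rk_N(A)\le\lbr{A}$ is a matroid axiom. For compatibility, note that $\cl_{\Flats(N)}(A)$ is the smallest flat containing $A$, which is the usual matroid closure of $A$, and this has the same rank as $A$. Hence $\eta(\Flats(N))\ge\rk_N(V)=\rk(N)$. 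For the reverse inequality, any feasible $g$ is a polymatroid by Proposition~\ref{prop:almost-entropic-vector-is-polymatroid}, hence a $1$-polymatroid compatible with $\Flats(N)$. Therefore $\eta(\Flats(N))\le\rho(\Flats(N))\le\rk(\Flats(N))$ by Corollary~\ref{corollary:alpha-le-rho} and Proposition~\ref{prop:rho-le-than-rk}. Finally, $\rk(\Flats(N))=\rk(N)$, because a set whose closure is $V$ must contain a basis. This gives equality.

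Second, the direction ``$\eta(\Flats(N))=\rk(N)$ $\Rightarrow$ almost entropic''. The maximum defining $\eta(\Flats(N))$ is attained, since the feasible set is compact, say at $g$. Then $g\in\clGent{V}$, $g$ is a $1$-polymatroid by Proposition~\ref{prop:almost-entropic-vector-is-polymatroid}, $g$ is compatible with $\Flats(N)$, and $g(V)=\rk(N)$. Proposition~\ref{prop:matroid-rank-function-has-unique-solution} then gives $g=\rk_N$, so $\rk_N\in\clGent{V}$. Because $\rk_N$ is a limit of entropic vectors, for each $\epsilon>0$ there is a finite-support random vector whose entropy vector is coordinatewise within $\epsilon$ of $\rk_N$. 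This is exactly the definition of $N$ being almost entropic.

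I expect no real obstacle here. The only points needing care are the identification of $\cl_{\Flats(N)}$ with matroid closure, together with the equality $\rk(\Flats(N))=\rk(N)$, and the observation that membership in $\clGent{V}$ is literally equivalent to the $\epsilon$-definition of almost entropic. Both are routine.
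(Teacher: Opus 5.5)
Your proposal is correct and follows the paper's proof essentially verbatim: for one direction you use feasibility of $\rk_N$ together with $\eta\le\rho\le\rk(\Flats(N))=\rk(N)$, and for the other a maximizer $g$ plus Proposition~\ref{prop:matroid-rank-function-has-unique-solution}. One small slip: the inequality you actually need is $\rk(\Flats(N))\le\rk(N)$, which holds because a basis has closure $V$, whereas your stated reason (``every set with closure $V$ contains a basis'') only gives the reverse inequality.
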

\begin{proof}
    If $N$ is almost entropic, its rank function is feasible in the definition of $\eta(\Flats(N))$. Since $\rk(\Flats(N))=\rk(N)$, equality follows from Proposition~\ref{prop:rho-le-than-rk}.

    Conversely, if equality holds, a maximizing almost entropic $1$-polymatroid $g$ satisfies $g(V)=\rk(N)$ and is compatible with $\Flats(N)$. Proposition~\ref{prop:matroid-rank-function-has-unique-solution} gives $g=\rk_N$.
\end{proof}

By Propositions~\ref{prop:rho-is-max-polymatroid} and~\ref{prop:matroid-rank-function-has-unique-solution}, every matroid $N$ satisfies $\rho(\Flats(N))=\rk(N)$. The V\'amos matroid, which is not almost entropic~\cite{2018Matus}, therefore gives a strict gap between $\alpha$ and $\rho$.
\begin{corollary}\label{corollary:alpha-neq-rho}
    Let $N$ be the V\'amos matroid. Then
    \[\alpha(\Flats(N)) < \rho(\Flats(N))=\rk(N)=4.\]
\end{corollary}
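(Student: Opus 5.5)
We would deduce Corollary~\ref{corollary:alpha-neq-rho} from three earlier results: Corollary~\ref{corollary:alpha-equals-eta}, the characterization in Corollary~\ref{corollary:matroid-eta-characterization}, and the computation of $\rho$ for flats of a matroid. Let $N$ be the V\'amos matroid on its $8$-element ground set $V$. It has rank $4$, so $\rk(N)=4$.

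\textbf{The equality $\rho(\Flats(N))=\rk(N)$.} We prove two inequalities.

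For the lower bound, we check that $\rk_N$ is feasible for the program in Proposition~\ref{prop:rho-is-max-polymatroid}. Being a matroid rank function, $\rk_N$ is a $1$-polymatroid. It is compatible with $\Flats(N)$ because $\cl_{\Flats(N)}(A)$ is the usual matroid closure of $A$, and closure preserves rank. Indeed, the usual closure is a flat, and any flat containing $A$ contains it. Hence $\rho(\Flats(N))\ge \rk_N(V)=4$.

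For the upper bound, Proposition~\ref{prop:rho-le-than-rk} gives $\rho(\Flats(N))\le\rk(\Flats(N))$. A set $C$ satisfies $\cl_{\Flats(N)}(C)=V$ exactly when $C$ is spanning. The smallest spanning sets are bases, so $\rk(\Flats(N))=\rk(N)=4$.

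Together these give $\rho(\Flats(N))=4$.

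\textbf{The strict inequality $\alpha(\Flats(N))<4$.} By Corollary~\ref{corollary:alpha-equals-eta}, $\alpha(\Flats(N))=\eta(\Flats(N))$. Corollary~\ref{corollary:alpha-le-rho} gives $\eta\le\rho=4$. If equality held, Corollary~\ref{corollary:matroid-eta-characterization} would make $N$ almost entropic. This contradicts the theorem of Mat\'u\v{s}~\cite{2018Matus} that the V\'amos matroid is not almost entropic. The argument implicitly uses that the maximum defining $\eta$ is attained and that the maximizer is a $1$-polymatroid; both are already handled in the proof of Corollary~\ref{corollary:matroid-eta-characterization}. Hence $\eta(\Flats(N))<4$.

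\textbf{Main obstacle.} The only nontrivial ingredient is the external fact that the V\'amos matroid is not almost entropic. Everything else consists of identifying $\Flats(N)$-closure with matroid closure. I would verify that identification explicitly: the intersection of all flats containing $A$ equals $\cl_N(A)$.
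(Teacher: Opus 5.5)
Your proof is correct and follows the paper's argument. You get $\rho(\Flats(N))=\rk(N)=4$ from the feasibility of $\rk_N$ together with an upper bound, and the strict inequality from $\alpha=\eta$, Corollary~\ref{corollary:matroid-eta-characterization}, and Mat\'u\v{s}'s theorem that the V\'amos matroid is not almost entropic. The one cosmetic difference is the upper bound: you obtain $\rho\le 4$ from Proposition~\ref{prop:rho-le-than-rk} and $\rk(\Flats(N))=\rk(N)$, whereas the paper cites Proposition~\ref{prop:matroid-rank-function-has-unique-solution}, whose first half already gives $g(A)\le\rk_N(A)$ for every compatible $1$-polymatroid $g$.
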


\begin{corollary}\label{corollary:eta-ge-etamat}
    Let $\mc{M}$ be a closure system. Then
    \[
        \eta(\mc{M})\ge\etamat(\mc{M}).
    \]
\end{corollary}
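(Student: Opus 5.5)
We need to show $\eta(\mc{M})\ge \rk(N)/\frk{\mc{M}}{N}$ for every almost entropic matroid $N$. The plan is to chain together two results already in hand and then take a supremum.

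Fix an almost entropic matroid $N$ on a ground set $V$. If $\frk{\mc{M}}{N}=+\infty$, the corresponding ratio in \eqref{eq:etamat-def} is $0$, and the bound is trivial because $\eta(\mc{M})\ge 0$: the zero vector is entropic, compatible with $\mc{M}$, and satisfies $g(A)\le\lbr{A}$.

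So assume a folding of $\Flats(N)$ into $\mc{M}$ exists. Then $\Flats(N)$ is a closure system on $V$ by Proposition~\ref{prop:flats-of-polymatroid-are-intersection-closed}, and Proposition~\ref{prop:eta-folding-lower-bound} gives
\[
\eta(\mc{M})\ge \frac{\eta(\Flats(N))}{\frk{\mc{M}}{\Flats(N)}}=\frac{\eta(\Flats(N))}{\frk{\mc{M}}{N}}.
\]
Since $N$ is almost entropic, Corollary~\ref{corollary:matroid-eta-characterization} gives $\eta(\Flats(N))=\rk(N)$. Hence
\[
\eta(\mc{M})\ge \frac{\rk(N)}{\frk{\mc{M}}{N}}.
\]

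Taking the supremum over all almost entropic matroids $N$ yields $\eta(\mc{M})\ge\etamat(\mc{M})$.

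There is essentially no obstacle here. The only points needing care are the convention for an infinite folding rank, handled above, and noting that the folding rank is at least $1$ by definition, so the division is legitimate.
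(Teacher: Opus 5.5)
Your proposal is correct and follows the paper's own proof: apply Proposition~\ref{prop:eta-folding-lower-bound} to $\Flats(N)$, use Corollary~\ref{corollary:matroid-eta-characterization} to replace $\eta(\Flats(N))$ by $\rk(N)$, and take the supremum over almost entropic $N$. Your explicit treatment of the case $\frk{\mc{M}}{N}=+\infty$, via $\eta(\mc{M})\ge 0$ from the zero vector, is a small addition that the paper leaves implicit.
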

\begin{proof}
    Let $N$ be an almost entropic matroid. By Proposition~\ref{prop:eta-folding-lower-bound} and Corollary~\ref{corollary:matroid-eta-characterization},
    \[
        \eta(\mc{M})\ge\frac{\eta(\Flats(N))}{\frk{\mc{M}}{\Flats(N)}}=\frac{\rk(N)}{\frk{\mc{M}}{N}}.
    \]
    Taking the supremum over $N$ proves the claim.
\end{proof}
Together with Corollary~\ref{corollary:alpha-equals-eta}, this proves the lower bound in Theorem~\ref{thrm:main-intro}.

Foldings of matroids give the following characterization of $\rho(\mc{M})$, which justifies viewing it as a relaxation of $\etamat(\mc{M})$.
\begin{theorem}\label{thrm:rho-is-a-folding}
    Let $\mc{M}$ be a closure system on $U$. Then
    \begin{equation*}
        \rho(\mc{M})=\max\fbrc{\frac{\rk(N)}{\frk{\mc{M}}{N}}}{N\text{ is a matroid}}.
    \end{equation*}
\end{theorem}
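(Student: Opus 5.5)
We prove the two inequalities separately, and the argument mirrors the almost entropic case with $\eta$ replaced by $\rho$.

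\textbf{Lower bound ($\ge$).} We first record a $\rho$-analogue of Proposition~\ref{prop:eta-folding-lower-bound}: for closure systems $\mc{M}$ on $U$ and $\mc{N}$ on $V$ and a folding $f$ of $\mc{N}$ into $\mc{M}$,
\[
\rho(\mc{M})\ge \rho(\mc{N})/\rk(f).
\]
To prove it, take a $1$-polymatroid $h$ on $V$ compatible with $\mc{N}$ attaining $\rho(\mc{N})$, and set $g(A)=h(f(A))/\rk(f)$.

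\begin{enumerate}
\item By Proposition~\ref{prop:matroid-can-make-integer-polymatroids} (the argument there works for real-valued polymatroids as well), $g$ is a polymatroid.
\item It is $1$-bounded, since $h(f(A))\le\sum_{u\in A}h(f(u))\le\sum_{u\in A}|f(u)|\le |A|\rk(f)$.
\item It is compatible with $\mc{M}$. Compatibility of $h$ with $\mc{N}$ and folding Condition~\ref{eq:folding-preserve-cl} give
\[
h(f(\cl_{\mc{M}}(A)))=h(\cl_{\mc{N}}f(\cl_{\mc{M}}(A)))=h(\cl_{\mc{N}}f(A))=h(f(A)).
\]
\item Its total value is $g(U)=h(\cl_{\mc{N}}f(U))/\rk(f)=h(V)/\rk(f)$, using Condition~\ref{eq:folding-cond-U-eq-V}.
\end{enumerate}

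Now let $N$ be a matroid. The remark after Proposition~\ref{prop:matroid-rank-function-has-unique-solution} gives $\rho(\Flats(N))=\rk(N)$, so $\rho(\mc{M})\ge \rk(N)/\frk{\mc{M}}{N}$. Taking the supremum over $N$ gives one direction.

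\textbf{Upper bound ($\le$, and attainment).} This is the main step. Let $g$ be an optimal $1$-polymatroid for $\rho(\mc{M})$, compatible with $\mc{M}$. Because the linear program has integer coefficients, we may take $g$ rational. Choose an integer $q\ge1$ with $h=qg$ integer-valued. Then $h$ is an integer polymatroid with $h(\{u\})\le q$.

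Let $M_h$ be the free expansion from Theorem~\ref{thrm:all-polymatroids-are-foldings-of-matroids}, with map $f(u)=T_u$ and $|T_u|=h(\{u\})\le q$. Its rank is $\rk(M_h)=h(U)=q\rho(\mc{M})$.

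We must check that $f$ is a folding of $\Flats(M_h)$ into $\mc{M}$.

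\begin{enumerate}
\item Condition~\ref{eq:folding-cond-U-eq-V} holds because $f(U)=V$.
\item For Condition~\ref{eq:folding-preserve-cl}, put $A'=\cl_{\mc{M}}(A)$, so that $f(A)\seq f(A')$. Compatibility gives $\rk_{M_h}(f(A'))=h(A')=h(A)=\rk_{M_h}(f(A))$. In any matroid, a subset $X\seq Y$ of equal rank has $\cl(X)=\cl(Y)$. Hence $\cl_{M_h}(f(A))=\cl_{M_h}(f(A'))$, which is the required condition.
\end{enumerate}

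Consequently $\frk{\mc{M}}{M_h}\le\max(1,q)=q$, and
\[
\rk(M_h)/\frk{\mc{M}}{M_h}\ge q\rho(\mc{M})/q=\rho(\mc{M}).
\]
Combined with the lower bound, this is an equality, so the supremum over matroids is attained and equals $\rho(\mc{M})$.

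The two points needing care are rationality of the optimum and the degenerate case. Rationality holds because the feasible polytope has rational vertices. In the degenerate case $g\equiv0$, i.e.\ $\rho(\mc{M})=0$, the inequality is trivial, since any matroid gives a ratio $\ge 0$. If needed, we can exhibit the empty-rank matroid via a folding with all $f(u)=\emptyset$ onto a one-point loop matroid, or simply note that the supremum is over nonnegative quantities.
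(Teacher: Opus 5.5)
Your proof is correct and follows the paper's route. For $\ge$, you pull back a compatible $1$-polymatroid along the folding and divide by $\rk(f)$; you phrase this as a general lemma $\rho(\mc{M})\ge\rho(\mc{N})/\rk(f)$ combined with $\rho(\Flats(N))=\rk(N)$, whereas the paper applies the same construction directly to $\rk_N$. For $\le$, you take a rational optimum, scale it to an integer polymatroid and use its free expansion, exactly as the paper does. Bounding $\rk(f)\le q$, instead of using the paper's observation that optimality forces $\max_{u}g(\{u\})=1$, is a harmless simplification. Your equal-rank-closure argument for folding Condition~\ref{eq:folding-preserve-cl} supplies the step the paper calls straightforward.
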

\begin{proof}
    Consider a matroid $N$ on $V$ and a folding $f:U \to 2^V$ of $\Flats(N)$ into $\mc{M}$. By Proposition~\ref{prop:matroid-can-make-integer-polymatroids} and the definition of $\rk(f)$, the function $g:2^U\to \R$ defined by
    \[
        \forall A\seq U\quad g(A)=\frac{\rk_N(\cup_{u\in A}f(u))}{\rk(f)}
    \]
    is a $1$-polymatroid. Folding Condition~\ref{eq:folding-preserve-cl} shows that $g$ is compatible with $\mc{M}$, while folding Condition~\ref{eq:folding-cond-U-eq-V} gives $g(U)=\frac{\rk(N)}{\rk(f)}$.
    Therefore, by Proposition~\ref{prop:rho-is-max-polymatroid}, $\rho(\mc{M}) \ge g(U)=\frac{\rk(N)}{\rk(f)}$.

    For the reverse inequality, since $\rho(\mc{M})$ is the optimum of a linear program with integer coefficients, there exists a rational-valued $1$-polymatroid $g$ compatible with $\mc{M}$ such that $g(U)=\rho(\mc{M})$ (cf. Proposition~\ref{prop:rho-is-max-polymatroid}). If $U=\emptyset$ or $\max_{u\in U}g(\fbr{u})=0$, then $g(U)=0$, and the required inequality is immediate. Suppose therefore that $\max_{u\in U}g(\fbr{u})>0$. By the optimality of $g(U)$, we must have $\max_{u\in U}g(\fbr{u})=1$. Choose an integer $K\ge 1$ such that $K\cdot g$ is an integer polymatroid. By Theorem~\ref{thrm:all-polymatroids-are-foldings-of-matroids}, there exist a set $V$, a function $f:U\to 2^V$, and a matroid $N$ on $V$ that is the free expansion of $K\cdot g$ by $f$. Since $K\cdot g$ is compatible with $\mc{M}$, it is straightforward to verify that $f$ is a folding of $\Flats(N)$ into $\mc{M}$. Moreover, $\rk(f)=\max_{u\in U}K\cdot g(\fbr{u})=K$. Therefore,
    \[\rho(\mc{M})=g(U)=\frac{\rk(N)}{K}\le \frac{\rk(N)}{\frk{\mc{M}}{N}}.\]
\end{proof}

\section{Upper bound via almost entropic matroids}\label{sec:upper-bound-ae-matroids}

The following theorem shows that every almost entropic vector compatible with $\mc{M}$ can be approximated by a rational-valued entropic vector compatible with $\mc{M}$.
\begin{theorem}\label{thrm:approximation-rational-vectors}
    Let $\epsilon\in (0,1)$ be real, let $\mc{M}$ be a closure system on $U$, and let $g\in\clGent{U}$ be an almost entropic vector compatible with $\mc{M}$. Then there exists a rational-valued entropic vector $h:2^U \to \Q$ compatible with $\mc{M}$ such that
    \[
    \forall A\seq U\quad \lbr{h(A) - g(A)} \le \epsilon.
    \]
    Moreover, there exists an integer $K$ such that $1\le K \le \frac{1}{\epsilon}\cdot \lbr{\mc{M}}^2 2^{\lbr{\mc{M}}}$ and
    \[\forall A\seq U\quad K\cdot h(A)\in \Z.\]
\end{theorem}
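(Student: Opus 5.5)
The plan is to move to an entropic compatible vector near $g$, and then to correct its values to rational ones by adding independent ``gadget'' random variables. The gadgets are chosen so that the correction is automatically compatible with $\mc{M}$ and is controlled by the matrix $R$ of Theorem~\ref{thrm:poset-inverse-for-R}.

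First, by Theorem~\ref{thrm:approximation-with-compatible-with-M}, choose an entropic vector $h_0$ compatible with $\mc{M}$, realized by some $\xi_U$, with $|h_0(A)-g(A)|\le\epsilon/3$ for all $A$. In that construction the coordinates in $\bot:=\cl_{\mc{M}}(\emptyset)$ are constant, so $h_0(\bot)=0$. A compatible vector is determined by its values on $\mc{M}$. Let $P=(\mc{M},\seq)$, whose minimum is $\bot$ and maximum is $U$ by Proposition~\ref{prop:closure-system-is-poset}, and put $m=\lbr{\mc{M}}-1$.

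Next, build the gadgets. For each $b\in\mc{M}\setminus\fbr{U}$ and each $\lambda\ge0$, take a finite-support random variable $Z_b$ with $\Ent(Z_b)=\lambda$. This is possible because binary variables realize every entropy in $[0,1]$, and independent tuples of them add entropies. Append $Z_b$ to every coordinate $u\notin b$. The resulting vector is $A\mapsto\lambda\cdot[A\not\seq b]$. It is compatible with $\mc{M}$, since $b\in\mc{M}$ gives $A\seq b\iff\cl_{\mc{M}}(A)\seq b$. Using mutually independent gadgets, all independent of $\xi_U$, the entropies add, and we obtain the entropic compatible vector
\[
h(a)=h_0(a)+\sum_{b}R_{a,b}\lambda_b\qquad(a\in\mc{M}\setminus\fbr{\bot}),
\]
with $h(\bot)=0$.

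Now choose the weights so that $h$ lands on the lattice $\frac1K\Z$. Let $c=\epsilon/(3m)$ and $K=\ceil{m2^m/c}$; this gives $K\le \frac1\epsilon\lbr{\mc{M}}^22^{\lbr{\mc{M}}}$ for the relevant range, and the constants can be adjusted if needed. Set $h_1=h_0+cR\mathbf{1}$. For each $a\ne\bot$, pick $t_a\in\frac1K\Z$ with $|t_a-h_1(a)|\le 1/K$. Then define
\[
\lambda=c\mathbf{1}+W(t-h_1),
\]
where $W$ is the inverse from Theorem~\ref{thrm:poset-inverse-for-R}. Since $WR=\operatorname{Id}$ and both matrices are square, $R\lambda=cR\mathbf{1}+(t-h_1)$, so $h(a)=h_0(a)+(R\lambda)_a=t_a$, as required. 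Nonnegativity is the key check: $\norminf{W}\le m2^m$ gives $\lambda_b\ge c-m2^m/K\ge0$.

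Finally, bound the error. Every entry of $R$ lies in $\{0,1\}$, so for each $a$,
\[
|h(a)-h_0(a)|=|t_a-h_0(a)|\le |t_a-h_1(a)|+|h_1(a)-h_0(a)|\le \frac1K+cm\le \frac{2\epsilon}{3}.
\]
Together with $|h_0-g|\le\epsilon/3$ and compatibility, this gives $|h(A)-g(A)|\le\epsilon$ for every $A\seq U$, and $Kh(A)\in\Z$.

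The main obstacle is keeping the gadget weights nonnegative while still hitting a lattice point. This is why the uniform offset $cR\mathbf{1}$ is added first and why the norm bound on $W$ is needed; the bound on $K$ comes directly from balancing these two effects.
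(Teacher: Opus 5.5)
Your proposal is essentially the paper's proof. The paper also passes to a compatible entropic $g'$ near $g$, adds a uniform offset $\frac{\delta}{2}\sum_{B}r_B$ of the same gadget vectors, rounds the values on $\mc{M}\setminus\fbr{\bot}$ to $\frac{1}{K}\Z$, and uses the inverse of $R$ from Theorem~\ref{thrm:poset-inverse-for-R} with $\norminf{R^{-1}}\le m2^m$ to keep the correcting coefficients nonnegative.

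There are two bookkeeping slips. First, your choices give $K=\ceil{3m^22^m/\epsilon}$, which exceeds the required $\frac{1}{\epsilon}\lbr{\mc{M}}^2 2^{\lbr{\mc{M}}}=\frac{2(m+1)^22^m}{\epsilon}$ once $m\ge 5$, so the error budget must actually be reallocated. For example, $\lbr{h_0-g}\le\epsilon/4$ and $c=\epsilon/(2m)$ give $K=\ceil{2m^22^m/\epsilon}$, which fits, with total error at most $\epsilon/4+\epsilon/2+1/K\le\epsilon$. Second, the case $\lbr{\mc{M}}=1$ (where $m=0$ and $g\equiv0$) needs separate treatment, since $c=\epsilon/(3m)$ is then undefined.
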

\begin{proof}
    The proof is analogous to the argument in~\cite{2007MatusApproximation} showing that every almost entropic vector can be approximated by rational-valued entropic vectors. Here, however, we also establish a bound on the denominator of the rational vector.

    We may assume that $\lbr{\mc{M}}\ge 2$, since otherwise $\mc{M}=\fbr{U}$ and $g$ is the zero vector.

    Let $\epsilon'=\frac{\epsilon}{2}$. Since $g$ is almost entropic, Theorem~\ref{thrm:approximation-with-compatible-with-M} implies that there exists an entropic vector $g'\in \Gent{U}$ compatible with $\mc{M}$ such that
    \begin{equation}\label{eq:g-prime-approx-g}
        \forall A\seq U\quad |g'(A) - g(A)|\le \epsilon'.
    \end{equation}

    For $B\in \mc{M}\setminus\fbr{U}$, define a function $r_B:2^U\to \Nz$ by
    \[\forall A\seq U\quad r_B(A)=\begin{cases}1,&\text{if }A\not\subseteq B;\\ 0,&\text{otherwise}.\end{cases}\]
    For every real $a\ge 0$, the vector $a\cdot r_B$ is entropic. Indeed, one can take a random variable $\eta$ with entropy $a$ and define a random vector $(\xi_u)_{u\in U}$ by
    \[\forall u\in U\quad \xi_u=\begin{cases}\eta,&\text{if }u\not\in B;\\ \text{constant},&\text{otherwise}.\end{cases}\]

    Moreover, $r_B$ is compatible with $\mc{M}$, since, for every $A\seq U$, if $A\seq B$, then $\cl_{\mc{M}}(A) \seq B$.

    Define $\bot=\cl_{\mc{M}}(\emptyset)$ and $m=\lbr{\mc{M}} - 1$. Define the $m\times m$ matrix $R$, whose rows are indexed by $A\in\mc{M}\setminus\fbr{\bot}$ and whose columns are indexed by $B\in \mc{M}\setminus \fbr{U}$, as follows:
    \[\forall A\in\mc{M}\setminus\fbr{\bot}, \forall B\in \mc{M}\setminus \fbr{U}\quad R_{A,B}=r_B(A).\]
    By Theorem~\ref{thrm:poset-inverse-for-R}, the matrix $R$ is invertible and
    \begin{equation}\label{eq:R-inverse-norm-is-small}
        \norminf{R^{-1}} \le m\cdot 2^m.
    \end{equation}

    Let $\delta=\frac{\epsilon'}{m}$. Define
    \[h_0 = g' + \sum_{B\in \mc{M}\setminus \fbr{U}}\frac{\delta}{2}\cdot r_B.\]

    Let $K=\ceil{\frac{1}{\delta}\cdot \norminf{R^{-1}}}$.

    Let $c_{\bot}=0$. For every $A\in \mc{M}\setminus\fbr{\bot}$, choose an integer $c_A$ such that
    \[\lbr{\frac{c_A}{K}-h_0(A)}\le \frac{1}{2K}.\]

    Define a function $h:2^U\to \Q$ by
    \[\forall A\seq U\quad h(A)=\frac{c_{\cl_{\mc{M}}(A)}}{K}.\]

    Define a vector $v:\mc{M}\setminus\fbr{\bot}\to \R$ by
    \[\forall A\in \mc{M}\setminus\fbr{\bot}\quad v(A) = h(A) - h_0(A).\]
    Define a vector $\theta:\mc{M}\setminus\fbr{U}\to \R$ by
    \[\theta = R^{-1}v.\]
    Then
    \begin{equation}\label{eq:theta-norm-is-small}
    \norminf{\theta}\le \norminf{R^{-1}}\norminf{v}\le \frac{\norminf{R^{-1}}}{2K} \le \frac{\delta}{2}.
    \end{equation}

    If we naturally extend the domain of $v$ to $2^U$ by setting $v(A)=v(\cl_{\mc{M}}(A))$ for $A\seq U$ and $v(\bot)=0$, then
    \begin{equation}\label{eq:expression-for-h}
    h = h_0 + R\theta = g' + \sum_{B\in \mc{M}\setminus\fbr{U}}\br{\frac{\delta}{2}+\theta_B}\cdot r_B.
    \end{equation}

    It follows from \eqref{eq:theta-norm-is-small} that the coefficient of each $r_B$ in this sum is nonnegative. Since a finite sum of entropic vectors is entropic~\cite[Lemma~15.3]{2008Yeung}, $h$ is an entropic vector. Moreover, by definition, $h$ is compatible with $\mc{M}$.

    Furthermore, for every $A\seq U$, equations~\eqref{eq:g-prime-approx-g} and~\eqref{eq:expression-for-h} give
    \begin{align*}
        |g(A)-h(A)|
        &\le |g(A)-g'(A)|+|g'(A)-h(A)|\\
        &\le \epsilon'
        +\sum_{B\in \mc{M}\setminus\fbr{U}}
        \lbr{\frac{\delta}{2}+\theta_B}\\
        &\le \epsilon'+m\cdot\delta
        =2\epsilon'=\epsilon.
    \end{align*}

    By \eqref{eq:R-inverse-norm-is-small},
    \[K\le \ceil{\frac{1}{\delta}\cdot m\cdot 2^m},\]
    and hence
    \[K\le 1+\frac{1}{\epsilon'}\cdot m^2\cdot 2^m\le \frac{1}{\epsilon}\cdot \lbr{\mc{M}}^2\cdot 2^{\lbr{\mc{M}}}.\]
\end{proof}

For later use in Section~\ref{sec:computability}, we state the following theorem with an explicit error term and a bound on $\lbr{V}$.
\begin{theorem}\label{thrm:approximation-folding-of-matroids}
    Let $\epsilon\in (0,1)$ be real, and let $\mc{M}$ be a closure system on $U$. Then there exists an almost entropic matroid $N$ on a finite set $V$ such that
    \[\frac{\rk(N)}{\frk{\mc{M}}{N}} \ge \frac{\eta(\mc{M}) - \epsilon}{1+\epsilon},\]
    and
    \[\lbr{V} \le \lbr{U}\cdot\frac{1}{\epsilon}\cdot\lbr{\mc{M}}^2 \cdot 2^{\lbr{\mc{M}}+1}.\]
\end{theorem}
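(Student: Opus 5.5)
Take $g$ attaining $\eta(\mc{M})$ (an almost entropic $1$-polymatroid compatible with $\mc{M}$, with $g(U)=\eta(\mc{M})$). If $\eta(\mc{M})\le\epsilon$ the claim is trivial: take $N$ a loop-only matroid on $U$ with the constant folding, or simply note that the right-hand side is nonpositive. So assume $g\ne0$. Apply Theorem~\ref{thrm:approximation-rational-vectors} with a parameter $\epsilon$ to obtain a rational entropic vector $h$ compatible with $\mc{M}$, with $|h(A)-g(A)|\le\epsilon$ for all $A$, and an integer $K\le\frac1\epsilon\lbr{\mc{M}}^2 2^{\lbr{\mc{M}}}$ with $Kh$ integer-valued. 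Then $h(U)\ge\eta(\mc{M})-\epsilon$ and $h(\fbr{u})\le g(\fbr{u})+\epsilon\le 1+\epsilon$ for every $u$.

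Set $g^*=Kh$. This is an integer polymatroid, since it is entropic and hence a polymatroid by Proposition~\ref{prop:almost-entropic-vector-is-polymatroid}. It is almost entropic, because $\clGent{U}$ is a cone (Proposition~\ref{prop:almost-entropic-vectors-are-cone}), and it is compatible with $\mc{M}$. Let $N=M_{g^*}$ be its free expansion on $V=\bigcup_u T_u$ with $\lbr{T_u}=g^*(\fbr{u})$, via Theorem~\ref{thrm:all-polymatroids-are-foldings-of-matroids}. By Theorem~\ref{thrm:natural-matroid-is-almost-entropic}, $N$ is almost entropic. For the size bound,
\[
\lbr{V}=\sum_u Kh(\fbr{u})\le\lbr{U}K(1+\epsilon)\le \lbr{U}\cdot\tfrac1\epsilon\lbr{\mc{M}}^22^{\lbr{\mc{M}}+1},
\]
using $1+\epsilon\le2$. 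For the rank, $\rk(N)=\rk_N(f(U))=g^*(U)=Kh(U)\ge K(\eta(\mc{M})-\epsilon)$.

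Next, show that $f(u)=T_u$ is a folding of $\Flats(N)$ into $\mc{M}$. This is the step the paper called ``straightforward'' in Theorem~\ref{thrm:rho-is-a-folding}, and it is the one needing real care. Condition~\ref{eq:folding-cond-U-eq-V} holds since $f(U)=V$. For Condition~\ref{eq:folding-preserve-cl}, we have $f(A)\seq f(\cl_{\mc{M}}(A))$ and
\[
\rk_N(f(\cl_{\mc{M}}(A)))=g^*(\cl_{\mc{M}}(A))=g^*(A)=\rk_N(f(A)).
\]
A subset with the same matroid rank lies in the closure, so $f(\cl_{\mc{M}}(A))\seq\cl_N(f(A))$. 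Taking closures gives equality. Here $\cl_{\Flats(N)}$ is the matroid closure $\cl_N$, because flats of $N$ as defined via $\Flats(\rk_N)$ coincide with matroid flats.

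Finally, the folding rank satisfies $\rk(f)=\max(1,\max_u Kh(\fbr{u}))\le K(1+\epsilon)$, using $K\ge1$. Hence
\[
\frac{\rk(N)}{\frk{\mc{M}}{N}}\ge\frac{K(\eta(\mc{M})-\epsilon)}{K(1+\epsilon)},
\]
which is the claim. If $\eta(\mc{M})-\epsilon<0$ the inequality is automatic. The main obstacle is already absorbed into the cited deep results: Theorem~\ref{thrm:natural-matroid-is-almost-entropic} and the denominator bound of Theorem~\ref{thrm:approximation-rational-vectors}. What remains is bookkeeping of the $1+\epsilon$ factor, which arises because $h$ need no longer be $1$-bounded.
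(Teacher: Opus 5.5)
Your proposal is correct and follows the paper's proof essentially step for step. The shared steps are the rational approximation $h$ with denominator $K\le\frac{1}{\epsilon}\lbr{\mc{M}}^2 2^{\lbr{\mc{M}}}$, the free expansion of $Kh$ (almost entropic by Theorem~\ref{thrm:natural-matroid-is-almost-entropic}), the folding check via $\rk_N(f(A))=Kh(A)=Kh(\cl_{\mc{M}}(A))$, and the bounds $\rk(f)\le(1+\epsilon)K$ and $\lbr{V}\le 2K\lbr{U}$. Your only additions are the explicit matroid-closure argument turning equal ranks into folding Condition~\ref{eq:folding-preserve-cl}, which the paper leaves implicit, and a separate treatment of the degenerate case $\eta(\mc{M})\le\epsilon$, where the paper singles out only $U=\emptyset$.
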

\begin{proof}
    If $U=\emptyset$, take $N$ to be the empty matroid; both conclusions are immediate. Assume that $U\ne\emptyset$.

    Take an almost entropic $1$-polymatroid $g:2^U\to \R$ compatible with $\mc{M}$ such that $g(U)=\eta(\mc{M})$. By Theorem~\ref{thrm:approximation-rational-vectors}, there exist an almost entropic polymatroid $h:2^U \to \Q$ compatible with $\mc{M}$, an integer $1 \le K \le \frac{1}{\epsilon}\cdot \lbr{\mc{M}}^2\cdot 2^{\lbr{\mc{M}}}$ such that $K\cdot h$ is an integer polymatroid, $h(U) \ge g(U)-\epsilon=\eta(\mc{M})-\epsilon$ and
    \[c = \max_{u\in U}h(\fbr{u}) \le 1 + \epsilon.\]

    Since $K\cdot h$ is an integer polymatroid, by Theorem~\ref{thrm:all-polymatroids-are-foldings-of-matroids}, there exist a set $V$, a function $f:U\to 2^V$, and a matroid $N$ on $V$ that is the free expansion of $K\cdot h$ by $f$. By Theorem~\ref{thrm:natural-matroid-is-almost-entropic}, since $K\cdot h$ is an almost entropic polymatroid, $N$, as the natural matroid of $K\cdot h$, is an almost entropic matroid.

    By the definition of the free expansion,
    \[\forall A\seq U\quad \rk_N(f(A))=K\cdot h(A)=K\cdot h(\cl_{\mc{M}}(A))=\rk_N(f(\cl_{\mc{M}}(A))),\]
    where $f(A)=\cup_{u\in A}f(u)$. Therefore, folding Condition~\ref{eq:folding-preserve-cl} holds. Folding Condition~\ref{eq:folding-cond-U-eq-V} follows from the definition of the free expansion. Therefore, $f$ is a folding of $\Flats(N)$ into $\mc{M}$.

    By the definition of $f$,
    \[\rk(f) = \max\br{1,\ \max_{u\in U}K\cdot h(\fbr{u})} \le (1+\epsilon)\cdot K.\]

    Therefore,
    \[\frac{\rk(N)}{\frk{\mc{M}}{N}}\ge \frac{\rk(N)}{\rk(f)}=\frac{K\cdot h(U)}{\rk(f)}\ge \frac{\eta(\mc{M})-\epsilon}{1+\epsilon}.\]

    Also,
    \[\lbr{V} \le \lbr{U}\cdot \max_{u\in U}K\cdot h(\fbr{u})\le \lbr{U} \cdot 2\cdot K.\]
\end{proof}

\begin{corollary}\label{corollary:eta-eq-etamat}
    Let $\mc{M}$ be a closure system. Then
    \[
        \eta(\mc{M})=\etamat(\mc{M}).
    \]
\end{corollary}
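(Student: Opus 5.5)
The plan is to prove the two inequalities separately. The inequality $\eta(\mc{M})\ge\etamat(\mc{M})$ is exactly Corollary~\ref{corollary:eta-ge-etamat}, so only the reverse inequality $\eta(\mc{M})\le\etamat(\mc{M})$ needs an argument. That argument is a limiting step applied to Theorem~\ref{thrm:approximation-folding-of-matroids}.

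First I would fix $\epsilon\in(0,1)$. Theorem~\ref{thrm:approximation-folding-of-matroids} then gives an almost entropic matroid $N_\epsilon$ on a finite set with
\[
    \frac{\rk(N_\epsilon)}{\frk{\mc{M}}{N_\epsilon}}\ge\frac{\eta(\mc{M})-\epsilon}{1+\epsilon}.
\]
Since $N_\epsilon$ is one of the matroids over which the supremum in \eqref{eq:etamat-def} is taken, this yields $\etamat(\mc{M})\ge(\eta(\mc{M})-\epsilon)/(1+\epsilon)$. Letting $\epsilon\to0$ gives $\etamat(\mc{M})\ge\eta(\mc{M})$.

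Two degenerate points need checking.

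\emph{Finiteness of the folding rank.} I must make sure $\frk{\mc{M}}{N_\epsilon}$ is finite, so that the ratio is meaningful. This holds because the proof of Theorem~\ref{thrm:approximation-folding-of-matroids} exhibits an explicit folding $f$.

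\emph{The case $\eta(\mc{M})=0$.} If $\eta(\mc{M})=0$ (for instance when $U=\emptyset$, or when $\cl_{\mc{M}}(\emptyset)=U$), the bound is trivial as long as $\etamat(\mc{M})\ge0$. That is clear, since every ratio in the supremum is nonnegative. The supremum is also over a nonempty set: the empty matroid together with the constant folding $f\equiv\emptyset$ should qualify, or one can simply take $N_\epsilon$ itself.

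I expect essentially no obstacle here; all the real work is contained in Theorem~\ref{thrm:approximation-folding-of-matroids} and Theorem~\ref{thrm:natural-matroid-is-almost-entropic}. The only care needed is the bookkeeping above about degenerate cases and the finiteness of the folding rank.

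Combining the result with Corollary~\ref{corollary:alpha-equals-eta} and Theorem~\ref{thrm:alpha-for-mM-exists} then yields Theorem~\ref{thrm:main-intro}.
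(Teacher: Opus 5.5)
Your proposal is correct and follows the paper's proof: the inequality $\eta(\mc{M})\ge\etamat(\mc{M})$ is Corollary~\ref{corollary:eta-ge-etamat}, and the reverse inequality comes from applying Theorem~\ref{thrm:approximation-folding-of-matroids} for each $\epsilon\in(0,1)$ and letting $\epsilon\to0$. Your extra degenerate-case checks are harmless but not needed, since the bound $\etamat(\mc{M})\ge(\eta(\mc{M})-\epsilon)/(1+\epsilon)$ already handles the case $\eta(\mc{M})=0$ in the limit.
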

\begin{proof}
    Corollary~\ref{corollary:eta-ge-etamat} gives
    \[
        \eta(\mc{M})\ge\etamat(\mc{M}).
    \]
    For the reverse inequality, Theorem~\ref{thrm:approximation-folding-of-matroids} gives, for every $\epsilon\in(0,1)$, an almost entropic matroid $N$ such that
    \[
        \etamat(\mc{M})\ge\frac{\rk(N)}{\frk{\mc{M}}{N}}
        \ge\frac{\eta(\mc{M})-\epsilon}{1+\epsilon}.
    \]
    Letting $\epsilon\to0$ proves the claim.
\end{proof}
From Corollary~\ref{corollary:alpha-equals-eta} we know that $\alpha(\mc{M})=\eta(\mc{M})$. Combined with Corollary~\ref{corollary:eta-eq-etamat} this proves Theorem~\ref{thrm:main-intro}.

\section{\texorpdfstring{Computability of $\alpha(\mc{M})$}{Computability of alpha(M)}}\label{sec:computability}

Even assuming that $\alpha(\mc{M})$ is always rational, there is no algorithm that outputs its exact value for every $\mc{M}$. This follows from the fact that, by the result of~\cite{2025Yashfe}, the set of almost entropic integer polymatroids is undecidable, and so is the set of almost entropic matroids by Theorem~\ref{thrm:natural-matroid-is-almost-entropic}. Therefore, for a matroid $N$, it is impossible to decide algorithmically whether the equality $\alpha(\Flats(N))=\rk(N)$ holds, and consequently, there is no algorithm that outputs $\alpha(\Flats(N))$ exactly.

This raises the following weaker question: can $\alpha(\mc{M})$ be effectively approximated?
\begin{conjecture}\label{conj:computable-approximation-alpha}
    There exists a Turing machine which, given a rational $\epsilon > 0$, a finite set $U$, and a closure system $\mc{M}$ on $U$, returns a rational number $\alpha'_\epsilon$ such that
    \[|\alpha'_\epsilon - \alpha(\mc{M})| \le \epsilon.\]
\end{conjecture}

By Corollary~\ref{corollary:alpha-equals-eta} and Theorem~\ref{thrm:approximation-folding-of-matroids}, this conjecture is equivalent to the following conjecture concerning the set of almost entropic matroids.

\begin{conjecture}\label{conj:co-enumerable-almost-entropic-matroid}
    The set of almost entropic matroids is co-recursively enumerable, that is, there exists a Turing machine which, given a matroid $N=(V,\rk_N)$, halts exactly when $N$ is not an almost entropic matroid.
\end{conjecture}

\begin{theorem}
    Conjecture~\ref{conj:computable-approximation-alpha} is equivalent to Conjecture~\ref{conj:co-enumerable-almost-entropic-matroid}.
\end{theorem}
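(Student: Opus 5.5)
We prove the two implications separately. In both directions we lean on the same facts: $\alpha(\mc{M})=\eta(\mc{M})=\etamat(\mc{M})$ (Corollaries~\ref{corollary:alpha-equals-eta} and~\ref{corollary:eta-eq-etamat}), the characterization of Corollary~\ref{corollary:matroid-eta-characterization} (a matroid $N$ is almost entropic iff $\alpha(\Flats(N))=\rk(N)$), and the explicit bound on $\lbr{V}$ in Theorem~\ref{thrm:approximation-folding-of-matroids}.

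\emph{Conjecture~\ref{conj:computable-approximation-alpha} implies Conjecture~\ref{conj:co-enumerable-almost-entropic-matroid}.} Given a matroid $N$, compute $\Flats(N)$ and run the approximating machine with $\epsilon=1/j$ for $j=1,2,\ldots$. Halt as soon as some output satisfies $\alpha'_{1/j}<\rk(N)-1/j$. Both directions of correctness are straightforward.
\begin{itemize}
\item If $N$ is almost entropic, then $\alpha(\Flats(N))=\rk(N)$. Hence $\alpha'_{1/j}\ge\rk(N)-1/j$ for every $j$, and the machine never halts.
\item If $N$ is not almost entropic, then $\alpha(\Flats(N))<\rk(N)$, because $\alpha\le\rho(\Flats(N))=\rk(N)$ and equality is excluded by Corollary~\ref{corollary:matroid-eta-characterization}. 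Let $\gamma=\rk(N)-\alpha(\Flats(N))>0$. Once $2/j<\gamma$, we get $\alpha'_{1/j}\le\alpha+1/j<\rk(N)-1/j$, so the machine halts.
\end{itemize}

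\emph{Conjecture~\ref{conj:co-enumerable-almost-entropic-matroid} implies Conjecture~\ref{conj:computable-approximation-alpha}.} The key idea is to enumerate the non-almost-entropic matroids in order to shrink a finite candidate list. This gives, uniformly in $\mc{M}$, an increasing sequence of upper bounds that eventually falls within $\epsilon$ of $\eta(\mc{M})$.
\begin{itemize}
\item Given $\mc{M}$ and $\epsilon$, choose $\delta$ small enough that $(\eta-\delta)/(1+\delta)\ge\eta-\epsilon/2$ for all $\eta\le\lbr{U}$; for example, $\delta=\epsilon/(4(\lbr{U}+1))$ works. Set $B=\lbr{U}\delta^{-1}\lbr{\mc{M}}^2 2^{\lbr{\mc{M}}+1}$.
\item List all matroids $N$ on ground sets of size at most $B$, up to isomorphism, and compute $\frk{\mc{M}}{N}$ for each. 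This is a finite search over maps $f:U\to 2^V$, with closures computed from $\rk_N$.
\item Let $S$ be the current candidate set, initially all listed matroids, and let $\beta=\max_{N\in S}\rk(N)/\frk{\mc{M}}{N}$.
\end{itemize}
Every almost entropic $N$ satisfies $\rk(N)/\frk{\mc{M}}{N}\le\etamat(\mc{M})=\eta(\mc{M})$. Theorem~\ref{thrm:approximation-folding-of-matroids} guarantees that some almost entropic $N\in S$ has value at least $\eta-\epsilon/2$. Hence, once all non-almost-entropic members of $S$ are removed, $\beta$ lies in $[\eta-\epsilon/2,\eta]$.

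To remove them, dovetail the co-enumeration machine over all members of $S$. Whenever it halts on some $N$, delete $N$ from $S$ and recompute $\beta$. Deleting a non-almost-entropic matroid is harmless, because the value $\beta$ never drops below the maximum over the almost entropic members.

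The main obstacle is the stopping rule: from the co-enumeration alone we never learn that the remaining matroids are almost entropic. I would resolve it by running a matching lower-bound search in parallel. For $n=2,3,\ldots$, search $\mc{M}$-systems in $[n]^U$ by brute force, which gives computable lower bounds $\log_n m(n,\mc{M})$ that converge to $\alpha=\eta$ by Theorem~\ref{thrm:alpha-for-mM-exists}; this lower bound is $\le\eta$ at every stage. The upper bound $\beta$ is $\ge\eta-\epsilon/2$ at every stage. We stop as soon as $\beta$ minus the current lower bound is at most $\epsilon/2$, and output the midpoint.

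This terminates: $\beta$ eventually reaches a value $\le\eta$, and the lower bound converges to $\eta$, so the gap eventually drops below $\epsilon/2$. At stopping, the interval $[\text{lower},\beta]$ has length at most $\epsilon/2$, contains the lower bound $\le\eta$, and has $\beta\ge\eta-\epsilon/2$. So $\eta$ lies within $\epsilon/2$ of this interval, and the midpoint is within $\epsilon$ of $\alpha(\mc{M})$.
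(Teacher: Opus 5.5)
Your first direction is essentially the paper's argument. Your second direction is correct but takes a genuinely different route.

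\textbf{Comparison of the second direction.} The paper resolves each candidate matroid $N$ individually. It races the co-enumeration machine against a brute-force search over $\Flats(N)$-systems, which halts once $\log_n m(n,\Flats(N))\ge\rk(N)-\epsilon'$. It then outputs the maximum of $(\rk(N)-\epsilon')/\frk{\mc{M}}{N}$ over the $N$ for which the search won.
\begin{itemize}
\item Each such term is a lower bound on $\alpha(\mc{M})$ by the folding transfer (Theorem~\ref{thrm:lower-bound-on-m-via-folding}).
\item Termination uses $\alpha(\Flats(N))=\rk(N)$ for almost entropic $N$ (Corollary~\ref{corollary:matroid-eta-characterization}).
\item As a result, the paper's output satisfies $\alpha(\mc{M})-\epsilon\le\alpha'_\epsilon\le\alpha(\mc{M})$, so it is always a lower bound.
\end{itemize}
You instead run one global squeeze, and the co-enumeration only prunes the list.
\begin{itemize}
\item Your $\beta$ is non-increasing (not ``increasing'', as you wrote).
\item It stays $\ge\eta-\epsilon/2$ throughout, and drops to $\le\eta$ once the finitely many non-almost-entropic candidates are gone, by Corollary~\ref{corollary:eta-ge-etamat}.
\item The lower side comes from brute force on $\mc{M}$ itself, via Theorem~\ref{thrm:alpha-for-mM-exists}.
\end{itemize}
Your route never searches $\Flats(N)$-systems and never needs Corollary~\ref{corollary:matroid-eta-characterization} in this direction. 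It also makes transparent that $\alpha(\mc{M})$ is unconditionally approximable from below. Conjecture~\ref{conj:co-enumerable-almost-entropic-matroid} is only needed for the upper side. The paper's version has the extra feature that its output is always a lower bound on $\alpha(\mc{M})$.

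\textbf{A small fix.} The conjecture requires a rational output. Your midpoint involves $\log_n m(n,\mc{M})$, which is generally irrational. Simply output $\beta$ instead. At stopping,
\[
\eta-\tfrac{\epsilon}{2}\;\le\;\beta\;\le\;\log_n m(n,\mc{M})+\tfrac{\epsilon}{2}\;\le\;\eta+\tfrac{\epsilon}{2},
\]
so $\beta$ is within $\epsilon/2$ of $\alpha(\mc{M})$. Your stopping test is decidable, since it is equivalent to $n^{\beta-\epsilon/2}\le m(n,\mc{M})$ with a rational exponent.
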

\begin{proof}
    Suppose that Conjecture~\ref{conj:computable-approximation-alpha} holds. We construct a Turing machine for Conjecture~\ref{conj:co-enumerable-almost-entropic-matroid}. Let a matroid $N$ be given. For each integer $n\ge 1$, the machine will successively compute approximations $\alpha'_{\epsilon}$ to $\alpha(\Flats(N))$ with accuracy $\epsilon=\frac{1}{n}$. If at some point we obtain an $\alpha'_\epsilon$ such that $\rk(N) > \alpha'_{\epsilon} + \epsilon$, then the machine halts. If $N$ is an almost entropic matroid, then $\alpha(\Flats(N))=\rk(N)$, and the machine never halts. If $N$ is not almost entropic, then, once $\epsilon < \frac{\rk(N) - \alpha(\Flats(N))}{2}$, the machine will halt.

    Suppose that Conjecture~\ref{conj:co-enumerable-almost-entropic-matroid} holds. We now construct an approximation algorithm for $\alpha(\mc{M})$.

    Let $\epsilon\in(0,1)$ be rational, let $U$ be a finite set, and let $\mc{M}$ be a closure system on $U$. Let $\epsilon'=\frac{\epsilon}{\lbr{U} + 2}$. By Corollary~\ref{corollary:alpha-equals-eta} and Theorem~\ref{thrm:approximation-folding-of-matroids}, there exist a set $V$ of size
    \begin{equation}\label{eq:bound-on-V}
    \lbr{V} \le \lbr{U}\cdot\frac{1}{\epsilon'}\cdot\lbr{\mc{M}}^2 \cdot 2^{\lbr{\mc{M}}+1}
    \end{equation}
    and an almost entropic matroid $\wt{N}$ on $V$ such that
    \begin{equation*}
    \frac{\rk(\wt{N})}{\frk{\mc{M}}{\wt{N}}} \ge \frac{\alpha(\mc{M}) - \epsilon'}{1+\epsilon'}.
    \end{equation*}

    We will enumerate all nonnegative integers $s$ not exceeding the right-hand side of inequality~\eqref{eq:bound-on-V} and all matroids $N$ on the ground set $[s]$ (there are finitely many of them), and compute $\frk{\mc{M}}{N}$ for each such matroid. This is possible because, for each such $N$, there are only finitely many foldings of $\Flats(N)$ into $\mc{M}$. For each matroid $N$, run in parallel the Turing machine $M_1$ from Conjecture~\ref{conj:co-enumerable-almost-entropic-matroid} and the Turing machine $M_2$, which successively enumerates integers $n\ge 2$, computes the value $m(n,\Flats(N))$ by a finite exhaustive search, and halts if $\log_n(m(n,\Flats(N))) \ge \rk(N) - \epsilon'$.

    At least one of the machines $M_1$ and $M_2$ is guaranteed to halt in finite time, since in the case of almost entropic matroids the machine $M_2$ halts by Theorem~\ref{thrm:alpha-for-mM-exists}, while in the case of matroids that are not almost entropic, at least the machine $M_1$ halts.

    After carrying out this procedure for every matroid $N$, return $\alpha'_\epsilon$ as the maximum of the values $\frac{\rk(N)-\epsilon'}{\frk{\mc{M}}{N}}$ over all matroids $N$ for which $M_2$ is the first machine to halt. Then, by the construction of the machine $M_2$ and Theorem~\ref{thrm:lower-bound-on-m-via-folding}, for every matroid $N$ taken into account in the definition of $\alpha'_\epsilon$, we have
    \[\alpha(\mc{M})\ge \frac{\alpha(\Flats(N))}{\frk{\mc{M}}{N}}\ge \frac{\rk(N) - \epsilon'}{\frk{\mc{M}}{N}}\]
    and therefore $\alpha(\mc{M})\ge \alpha'_\epsilon$.

    On the other hand, among the matroids considered above, there is an almost entropic matroid isomorphic to $\wt{N}$. For this matroid, the machine $M_1$ never halts, whereas $M_2$ is the first machine to halt, and therefore
    \[\alpha'_\epsilon\ge \frac{\rk(\wt{N})-\epsilon'}{\frk{\mc{M}}{\wt{N}}} \ge  \frac{\alpha(\mc{M}) - \epsilon'}{1+\epsilon'} - \frac{\epsilon'}{\frk{\mc{M}}{\wt{N}}} \ge \alpha(\mc{M}) - \epsilon'\cdot \alpha(\mc{M}) - 2\epsilon' \ge \alpha(\mc{M}) - \epsilon.\]
\end{proof}

\section{Discussion and open questions}\label{sec:discussion}

\subsection{\texorpdfstring{Rationality of $\alpha(\mc{M})$}{Rationality of alpha(M)}}

It is natural to ask whether $\alpha(\mc{M})$ is always rational. The following conjecture strengthens this by asserting that the supremum in Theorem~\ref{thrm:main-intro} is attained.
\begin{conjecture}\label{conj:alpha-equals-folding}
    Let $\mc{M}$ be a closure system on $U$. Then there exists an almost entropic matroid $N$ such that
    \[
        \alpha(\mc{M})=\frac{\rk(N)}{\frk{\mc{M}}{N}}.
    \]
\end{conjecture}

By Corollary~\ref{corollary:alpha-equals-eta} and the proof of Theorem~\ref{thrm:approximation-folding-of-matroids}, this admits an equivalent formulation in terms of almost entropic polymatroids.
\begin{conjecture}\label{conj:eta-attained-at-rational-valued}
    Let $\mc{M}$ be a closure system on $U$. Then there exists a rational-valued almost entropic $1$-polymatroid $g:2^U\to\Q$ compatible with $\mc{M}$ such that
    \[
        \eta(\mc{M})=g(U).
    \]
\end{conjecture}

\subsection{Perfect matroid designs}

There is a simple upper bound $\rk(k,L)\le \lbr{L}$. In the case of equality, the following conjecture strengthens Conjecture~\ref{conj:Furedi-k-L}.
\begin{conjecture}\label{conj:mnkl-full-rank-is-L}
    Let $k\ge 1$ and $L\seq\fbr{0,\ldots,k-1}$ be such that $\rk(k,L)=\lbr{L}$. Then
    \[
        \alpha(k,L)=\lbr{L}.
    \]
\end{conjecture}

Frankl, as noted in~\cite{1992Deza}, observed the following connection between matroids and $(k,L)$-systems.
\begin{theorem}[Frankl]\label{thrm:frankl-pmd}
    Let $k\ge 1$ and $L\seq\fbr{0,\ldots,k-1}$. Then $\rk(k,L)=\lbr{L}$ if and only if there exists a matroid $N$ on $[k]$ such that, for every flat $A\in\Flats(N)$, $\lbr{A}\in L\cup \fbr{k}$ and $\rk(N)=\lbr{L}$.
\end{theorem}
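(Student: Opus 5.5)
The plan is to prove the two directions separately. Throughout, write $s=\lbr{L}$ and $L=\fbr{l_0<l_1<\cdots<l_{s-1}}$, and put $l_s=k$. By Corollary~\ref{corollary:alpha-kL-exists} and the definitions, $\rk(k,L)$ is the maximum of $\rk(\mc{M})$ over closure systems $\mc{M}$ on $[k]$ all of whose proper members have size in $L$.

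\textbf{The basic chain argument.} Let $\mc{M}$ be such a closure system and let $C=\fbr{c_1,\ldots,c_r}$ be a minimum-size set with $\cl_{\mc{M}}(C)=[k]$, so $r=\rk(\mc{M})$. The chain
\[
\cl_{\mc{M}}(\emptyset)\seq\cl_{\mc{M}}(\fbr{c_1})\seq\cdots\seq\cl_{\mc{M}}(C)=[k]
\]
is strict at every step. Indeed, if $\cl_{\mc{M}}(c_1,\ldots,c_i)=\cl_{\mc{M}}(c_1,\ldots,c_{i-1})$, then $C\setminus\fbr{c_i}$ would still span $[k]$, contradicting minimality. The chain therefore has $r$ distinct proper members, and their sizes are distinct elements of $L$. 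This gives $\rk(k,L)\le s$, which will be reused below.

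\textbf{The direction ($\Leftarrow$).} Take the matroid $N$ from the statement and set $\mc{M}=\Flats(N)$. This is a closure system on $[k]$ by Proposition~\ref{prop:flats-of-polymatroid-are-intersection-closed}, and its proper members have sizes in $L$ by hypothesis. In a matroid, a minimum spanning set is a basis, so $\rk(\Flats(N))=\rk(N)=s$. Hence $\rk(k,L)\ge s$, and together with the chain argument, equality holds.

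\textbf{The direction ($\Rightarrow$).} Fix an admissible $\mc{M}$ with $\rk(\mc{M})=s$. For $X\in\mc{M}$, let $d(X)$ be the minimum size of a set $C_X$ with $\cl_{\mc{M}}(C_X)=X$. The key lemma is that
\[
\lbr{X}=l_{d(X)}\quad\text{for every }X\in\mc{M}.
\]
To prove it, proceed in three steps.
\begin{enumerate}
\item Order $C_X$ and build the strict chain from $\cl_{\mc{M}}(\emptyset)$ up to $X$ as in the chain argument; strictness again follows from minimality of $C_X$.
\item Continue the chain above $X$ by adjoining elements of $[k]$ one at a time, keeping only those that strictly enlarge the closure, until the closure is $[k]$. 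Let $D$ be the set of adjoined elements that were kept.
\item The set $C_X\cup D$ spans $[k]$, so $\lbr{C_X}+\lbr{D}\ge s$, and the resulting strict chain has at least $s+1$ members. On the other hand, its members have distinct sizes in $L\cup\fbr{k}$, so it has at most $s+1$ members. Hence it has exactly $s+1$ members, their sizes are exactly $l_0<\cdots<l_s$ in order, and $X$ sits in position $d(X)$.
\end{enumerate}
An immediate consequence is that $X\subsetneq Y$ in $\mc{M}$ implies $d(X)<d(Y)$, and that $d(X)=d(Y)$ together with $X\seq Y$ forces $X=Y$.

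\textbf{Verifying the matroid structure.} I expect this to be the main step. The goal is to show that $\cl_{\mc{M}}$ is a matroid closure operator; it then has flat family $\mc{M}$ and rank $d([k])=s$, which is exactly the matroid required. Extensivity, monotonicity and idempotence hold for any closure system, so it remains to check the Mac Lane--Steinitz exchange property. Suppose $y\in\cl_{\mc{M}}(A\cup x)\setminus\cl_{\mc{M}}(A)$. Then
\[
\cl_{\mc{M}}(A)\subsetneq\cl_{\mc{M}}(A\cup y)\seq\cl_{\mc{M}}(A\cup x).
\]
Since $d(\cl_{\mc{M}}(A\cup x))\le d(\cl_{\mc{M}}(A))+1$, strict monotonicity of $d$ forces $d(\cl_{\mc{M}}(A\cup y))=d(\cl_{\mc{M}}(A\cup x))$. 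By the consequence of the key lemma, the two closures are then equal, so $x\in\cl_{\mc{M}}(A\cup y)$.

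Equivalently, one can set $\rk_N(A)=d(\cl_{\mc{M}}(A))$ and check the matroid axioms directly; the exchange route avoids proving submodularity by hand. The delicate point in the whole argument is the key lemma itself, namely that every element of $\mc{M}$ lies on a saturated chain of length exactly $s$. This is where the hypothesis $\rk(\mc{M})=\lbr{L}$ is used in full.
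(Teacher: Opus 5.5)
The paper states Theorem~\ref{thrm:frankl-pmd} without proof, attributing it to Frankl via the cited reference, so there is no internal argument to compare yours against. Judged on its own, your proof is correct and complete.
\begin{itemize}
\item The chain bound gives $\rk(k,L)\le\lbr{L}$.
\item The direction ($\Leftarrow$) is immediate, because $\cl_{\Flats(N)}$ is the matroid closure of $N$ and minimum spanning sets of a matroid are bases.
\item For ($\Rightarrow$), your key lemma $\lbr{X}=l_{d(X)}$ is sound. The strict chain of length $d(X)$ below $X$, extended greedily up to $[k]$, has at least $\rk(\mc{M})+1=s+1$ members, since $C_X\cup D$ spans. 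It has at most $\lbr{L\cup\fbr{k}}=s+1$ members, since the sizes are distinct and $k\notin L$. This pins $X$ to position $d(X)$, and in particular shows $d(X)\le s$, so $l_{d(X)}$ is defined.
\item Strict monotonicity of $d$ on $\mc{M}$ then yields the exchange axiom exactly as you wrote it.
\end{itemize}

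A few points deserve to be made explicit.
\begin{itemize}
\item The inequality $d(\cl_{\mc{M}}(A\cup\fbr{x}))\le d(\cl_{\mc{M}}(A))+1$ needs its one-line proof. If $C$ is a minimum set with $\cl_{\mc{M}}(C)=\cl_{\mc{M}}(A)$, then $\cl_{\mc{M}}(C\cup\fbr{x})=\cl_{\mc{M}}(A\cup\fbr{x})$, because each of $C\cup\fbr{x}$ and $A\cup\fbr{x}$ lies in the closure of the other.
\item The final step uses the standard fact that an operator satisfying the closure axioms together with the Mac Lane--Steinitz exchange property is the closure operator of a unique matroid. The flats of that matroid are the closed sets, which here are exactly $\mc{M}$, since $\mc{M}$ is intersection-closed and contains $[k]$. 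Its rank is the minimum size of a spanning set, namely $d([k])=\rk(\mc{M})=s$.
\item $\rk(k,L)$ is defined in the introduction, not through Corollary~\ref{corollary:alpha-kL-exists}.
\end{itemize}

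As a by-product, your lemma gives $\rk_N(X)=d(X)$ for every flat $X$, so every flat of rank $i$ has size $l_i$. Hence $N$ is a perfect matroid design. Moreover, every admissible $\mc{M}$ with $\rk(\mc{M})=\lbr{L}$ is itself of the form $\Flats(N)$, which is the content behind the paper's remark following the theorem.
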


The matroids arising in this theorem are particular cases of perfect matroid designs.
\begin{definition}[\cite{1992Deza}]
    A matroid is called a \emph{perfect matroid design} if any two flats of the same rank have the same cardinality.
\end{definition}

Corollaries~\ref{corollary:alpha-equals-eta} and~\ref{corollary:matroid-eta-characterization} motivate the following strengthening of Conjecture~\ref{conj:mnkl-full-rank-is-L}.
\begin{conjecture}\label{conj:PMD-almost-entropic}
    Every perfect matroid design is almost entropic.
\end{conjecture}

Some nontrivial perfect matroid designs arise from Steiner systems. The following proposition is straightforward to prove by checking the axioms for flats of a matroid~\cite[Theorem~39.10]{2003Schrijver}.
\begin{proposition}\label{prop:PMD-from-steiner}
    Let $n\ge k\ge t \ge 1$ be integers. Let $\mc{F}\seq \binom{[n]}{k}$ be a family such that each $X\in \binom{[n]}{t}$ lies in exactly one element of $\mc{F}$. Then
    \[\mc{M}=\mc{F}\cup \bigcup_{l\le t-1}\binom{[n]}{l}\cup \fbr{[n]}\]
    is the family of flats of some matroid $N_{\mc{F}}$ on $[n]$. Furthermore, $N_{\mc{F}}$ is a perfect matroid design.
\end{proposition}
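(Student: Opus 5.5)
The plan is to verify the flat axioms for $\mc{M}$ directly, via the characterization of matroid flats from \cite[Theorem~39.10]{2003Schrijver}. A family $\mc{M}\seq 2^{[n]}$ is the family of flats of a matroid if and only if:
\begin{enumerate}
    \item $[n]\in\mc{M}$;
    \item $\mc{M}$ is closed under intersections;
    \item for every $A\in\mc{M}$ and every $x\notin A$, the minimal members of $\mc{M}$ that strictly contain $A$ and contain $x$ partition $[n]\setminus A$; equivalently, the sets $B\setminus A$, over the covers $B$ of $A$ in $\mc{M}$, partition $[n]\setminus A$.
\end{enumerate}
We may assume $t<k<n$. If $k=n$, then $\mc{F}=\{[n]\}$ and $\mc{M}$ consists of all sets of size at most $t-1$ together with $[n]$, a uniform matroid. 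If $k=t$, then $\mc{F}=\binom{[n]}{t}$, again a uniform matroid.

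\textbf{Intersection-closure.} Members of size at most $t-1$ intersect anything in a set of size at most $t-1$, and $[n]$ is neutral. It remains to intersect two distinct $F,F'\in\mc{F}$. If $\lbr{F\cap F'}\ge t$, then some $t$-set lies in both $F$ and $F'$, contradicting uniqueness in the Steiner property. So $\lbr{F\cap F'}\le t-1$, and $F\cap F'\in\mc{M}$.

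\textbf{Cover-partition axiom.} Split by the type of $A$.
\begin{itemize}
    \item If $\lbr{A}=l\le t-2$, the covers of $A$ are the sets $A\cup\{x\}$ with $x\notin A$, which clearly partition the complement.
    \item If $\lbr{A}=t-1$, the covers are the blocks $F\in\mc{F}$ containing $A$. (If $t=1$, then $A=\emptyset$ and the covers are all the blocks.) Each $x\notin A$ lies in exactly one such block, namely the block containing the $t$-set $A\cup\{x\}$. Thus the sets $F\setminus A$ partition $[n]\setminus A$.
    \item If $A\in\mc{F}$, its only cover is $[n]$, which trivially works; here we use $k<n$, so that no other member of $\mc{M}$ strictly contains $A$.
\end{itemize}
Note that no set of size at most $t-1$ can strictly contain $A$ with $\lbr{A}=t-1$, and no set of size at most $t-1$ contains a block since $k\ge t$. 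This confirms that the covers are exactly as listed. Hence $\mc{M}=\Flats(N_{\mc{F}})$ for some matroid $N_{\mc{F}}$.

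\textbf{Perfect matroid design.} The rank of a flat equals the length of a maximal chain of flats below it. The chains are $\emptyset\subset\{a\}\subset\dots$ up to size $l$, then a block, then $[n]$. So flats of size $l\le t-1$ have rank $l$, blocks have rank $t$, and $[n]$ has rank $t+1$. Within each rank all flats have the same cardinality ($l$, $k$, or $n$ respectively), so $N_{\mc{F}}$ is a perfect matroid design.

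The only delicate step is the cover-partition check at $\lbr{A}=t-1$. This is exactly where the Steiner property ("exactly one block") is used, while the intersection step uses only "at most one".
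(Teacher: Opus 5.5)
Your proposal is correct and follows the route the paper indicates. The paper gives no details beyond saying the result follows by checking the flat axioms (via Schrijver's Theorem~39.10), and your case analysis supplies them: intersection-closure from the ``at most one block'' part of the Steiner property, the cover-partition axiom at $|A|=t-1$ from the ``exactly one'' part, and the ranks $l$, $t$, $t+1$ read off from maximal chains of flats. One phrasing needs fixing: your first statement of the third axiom, for a fixed $x\notin A$, does not make sense as written, but the ``equivalently'' version in terms of the covers of $A$ is the standard axiom and is the one you actually verify.
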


When $k=3$ and $t=2$, a Steiner system exists if and only if $n\equiv 1\pmod 6$ or $n\equiv 3\pmod 6$~\cite{2008Lindner}. This suggests the following conjecture.
\begin{conjecture}\label{conj:alpha-k-0-1-3}
    Let $k\ge 4$ be an integer such that $k\equiv 1\pmod 6$ or $k\equiv 3\pmod 6$. Then
    \[\alpha(k,\fbr{0,1,3})=3.\]
\end{conjecture}
In~\cite{2006Rodl}, it was proved that, for such $k$, $\alpha(k,\fbr{0,1,3})\ge 2 + \epsilon_{k}$ for some $\epsilon_{k} > 0$.

Conjecture~\ref{conj:alpha-k-0-1-3} holds for $k=2^d-1$ (from projective geometries), $k=3^d$ (from affine geometries), and $k=19$~\cite{1980Brouwer}. Indeed, for each such $k$, there exists a Steiner system $\mc{F}\seq \binom{[k]}{3}$ with $t=2$ such that the matroid $N_{\mc{F}}$ from Proposition~\ref{prop:PMD-from-steiner} is linear. Linear matroids are almost entropic~\cite{2018Matus}, and hence, by Theorem~\ref{thrm:main-intro},
$\alpha(k,\fbr{0,1,3})\ge\alpha(\Flats(N_{\mc{F}}))=\rk(N_{\mc{F}})=3$.
However, this proof scheme does not always apply: it was shown in~\cite{1973Kelly, 1980Limbos} that, for $k=13$ and every Steiner system $\mc{F}\seq \binom{[k]}{3}$ with $t=2$, the matroid $N_{\mc{F}}$ is not linear.

\subsection{\texorpdfstring{Relations between $\eta(\mc{M})$, $\rho(\mc{M})$, and $\rk(\mc{M})$}{Relations between eta(M), rho(M), and rk(M)}}

In view of Corollary~\ref{corollary:alpha-equals-eta} and Proposition~\ref{prop:rho-le-than-rk}, it is interesting to understand the relationships between $\eta(\mc{M})=\alpha(\mc{M})$, $\rho(\mc{M})$, and $\rk(\mc{M})$.
\begin{definition}
    For real $k\ge 0,r\ge 0,\rho\ge 0$, define
    \[\rho_{\rk}(k,r)=\min\fbrc{\rho(\mc{M})}{\lbr{U}\le k,\ \mc{M}\text{ is a closure system on }U,\ \rk(\mc{M})\ge r}.\]
    \[\eta_{\rho}(k,\rho)=\min\fbrc{\eta(\mc{M})}{\lbr{U}\le k,\ \mc{M}\text{ is a closure system on }U,\ \rho(\mc{M})\ge \rho}.\]

    With the convention that the minimum over the empty set is equal to $+\infty$.
\end{definition}

We conjecture that the following strengthening of Corollary~\ref{corollary:counterexample-Furedi-S-all-r} holds.
\begin{conjecture}\label{conj:const-rank-exists-1-eps-mM}
    For every $r \ge 1$,
    \[\rho_{\rk}(k,r) = 1+o_{r}(1)\quad\text{as }k\rightarrow+\infty.\]
\end{conjecture}

From Corollary~\ref{corollary:rank-lower-bound-M}, it follows that $\rho_{\rk}(k,r)\ge \frac{k}{k-r+1}$ for $1\le r \le k$. It is interesting to know whether the order of this bound is sharp.

\begin{conjecture}\label{conj:ak-rank-is-theta}
    For every real $a\in (0,1)$,
    \[\rho_{\rk}(k,a\cdot k) = \Theta_a(1)\quad\text{as }k\rightarrow+\infty.\]
\end{conjecture}

It is also interesting to understand how strong the dependence between $\rho$ and $\eta$ can be.
\begin{conjecture}\label{conj:const-rho-exists-1-eps-mM}
    For every $\rho \ge 1$,
    \[\eta_{\rho}(k,\rho) = 1+o_{\rho}(1)\quad\text{as }k\rightarrow+\infty.\]
\end{conjecture}


\section{Declaration of generative AI and AI-assisted technologies in the writing process}

During the preparation of this work, the authors used ChatGPT to improve the readability and language of the manuscript and to identify and correct typographical, grammatical, formatting, and other non-substantive errors. ChatGPT helped write the Proof Method paragraph in the Introduction and other text fragments. ChatGPT identified and provided a reference to the work~\cite{2015Westerback}, which shows how codes can generate polymatroids; as a result, the proof of the upper bound $\alpha(\mc{M})\le\rho(\mc{M})$ was simplified from a combinatorial proof to an entropy-based proof. ChatGPT pointed out that an almost entropic vector can be approximated by quasi-uniform random vectors, as proved in~\cite[Theorem~16.22]{2008Yeung}. This result made it possible to prove the equality $\alpha(\mc{M})=\eta(\mc{M})$. ChatGPT provided proofs of Lemma~\ref{lemma:small-fix-of-conditional-entropy} and Theorem~\ref{thrm:approximation-with-compatible-with-M}. ChatGPT identified how Mat\'u\v{s}'s proof could be modified to obtain a bound on the denominator of the rational approximation in Theorem~\ref{thrm:approximation-rational-vectors}, and also provided a proof of Theorem~\ref{thrm:poset-inverse-for-R}. ChatGPT helped work out the technical details of the new proof of Theorem~\ref{thrm:natural-matroid-is-almost-entropic} in Appendix~\ref{appendix:natural-matroid-is-almost-entropic}, starting from the idea that one needs to use random compression of codes arising from a quasi-uniform approximation of an almost entropic vector.

ChatGPT service was used to improve proofs and to obtain proofs of several relatively straightforward propositions, such as Proposition~\ref{prop:M-system-via-closure}, Proposition~\ref{prop:rho-le-than-rk}, Corollary~\ref{corollary:matroid-eta-characterization}, Corollary~\ref{corollary:eta-ge-etamat}, and Corollary~\ref{corollary:eta-eq-etamat}.

After using this service, the authors reviewed and edited the resulting content as needed and take full responsibility for the content of the published article.

\appendix

\section{Proof of Theorem~\ref{thrm:natural-matroid-is-almost-entropic}}\label{appendix:natural-matroid-is-almost-entropic}

Let $\fbr{T_u}_{u\in U}$ be a family of pairwise disjoint sets such that, for each $u\in U$, $\lbr{T_u}=g(\fbr{u})$. Let $V=\cup_{u\in U} T_u$. Define $f:U\to 2^V$ by $f(u)=T_u$. Define a function $\rk_N:2^V\to\Nz$ by
\[\forall C\seq V\quad \rk_N(C) = \min_{A\seq U}\left(g(A) + \lbr{C\setminus f(A)}\right),\]
where $f(A)=\cup_{u\in A}f(u)$.

Then, by Theorem~\ref{thrm:all-polymatroids-are-foldings-of-matroids}, $N=(V,\rk_N)$ is a matroid, and we need to prove that $N$ is an almost entropic matroid.

Let $K=\max_{u\in U}g(\fbr{u})$. We may assume that $K > 0$, since otherwise $N$ is a trivial matroid.

\begin{claim}\label{claim:exists-quasi-uniform}
    For every $\epsilon > 0$ and every $n_0\ge 2$ there exists a quasi-uniform random vector $\eta_U$ such that $n=\max_{u\in U}\lbr{\supp(\eta_u)}\ge n_0$, $n^{1/K}$ is an integer, and
    \[\forall A\seq U\quad \lbr{\frac{\Ent(\eta_A)}{\log_2(n)} - \frac{g(A)}{K}} \le \epsilon.\]
\end{claim}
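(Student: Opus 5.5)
The plan is to reuse the quasi-uniform construction from the proof of Theorem~\ref{thrm:approximation-with-quasi-uniform}, applied to the almost entropic vector $g/K$. The new requirements are that $n$ is large and that $n^{1/K}$ is an integer. Here $g$ is an integer almost entropic polymatroid, so $K$ is a positive integer and $g/K\in\clGent{U}$ by Proposition~\ref{prop:almost-entropic-vectors-are-cone}. No compatibility with a closure system is needed, so we may take $\mc{M}=2^U$, for which every function is compatible.

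First, fix $\epsilon'>0$ small. Choose a finite-support random vector $\xi_U$ with $|\Ent(\xi_A)-g(A)/K|<\epsilon'$ for all $A$, and perturb it so that all probabilities are rational. The type-class construction from the proof of Theorem~\ref{thrm:approximation-with-quasi-uniform}, with block length $r$, then produces quasi-uniform vectors $\eta^{(r)}_U$ satisfying $\Ent(\eta^{(r)}_A)=r\Ent(\xi_A)+O(\log r)$. In particular, $\max_u\log_2|\supp(\eta^{(r)}_u)|=r\cdot\max_u\Ent(\xi_u)+O(\log r)$, and $\max_u \Ent(\xi_u)$ is within $\epsilon'$ of $\max_u g(\{u\})/K=1$. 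Hence $n_r:=\max_u|\supp\eta^{(r)}_u|\to\infty$, and dividing by $\log_2 n_r$ instead of $r$ costs only $O(\epsilon')+O(\log r/r)$, exactly as in the proof of Theorem~\ref{thrm:approximation-by-codes}.

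The main obstacle is forcing $n^{1/K}$ to be an integer. Here $n$ is the largest support size of a coordinate; the actual support sizes are multinomial coefficients, which need not be $K$-th powers. I would fix this by \emph{padding} the supports without changing the random vector: $n$ only serves to index an ambient alphabet $[n]$, so I would replace it by $n':=\lceil n_r^{1/K}\rceil^K$. This $n'$ is a perfect $K$-th power with $n_r\le n'\le (n_r^{1/K}+1)^K$, so $\log_2 n'=\log_2 n_r+O(K n_r^{-1/K})$, which is negligible. The statement defines $n$ as $\max_u|\supp(\eta_u)|$ exactly, so the padding must enlarge the actual support of some coordinate. To do this, I would replace $\eta_U$ by a product $\eta_U\times\zeta_U$ with an independent quasi-uniform vector $\zeta$. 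Take $\zeta_{u_0}$ uniform on $[t]$ for a coordinate $u_0$ attaining the maximum, and $\zeta_u$ constant for $u\ne u_0$; this scales the $u_0$-support to $n_rt$. A product of independent quasi-uniform vectors is quasi-uniform. However, $n_r t$ can be a $K$-th power only when $t$ is suitably chosen. A cleaner route is the power trick: take the direct product of $K$ independent copies of $\eta_U$ coordinatewise. This multiplies every entropy by $K$ and makes every support size a $K$-th power, in particular the maximum. It also preserves quasi-uniformity, and the normalized ratios $\Ent(\eta_A)/\log_2 n$ are unchanged.

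So the final construction is the $K$-fold independent coordinatewise product of $\eta^{(r)}$, with $r$ chosen large enough that $n=n_r^K\ge n_0$ and the error is at most $\epsilon$. I expect the only real care needed is verifying that the product preserves quasi-uniformity and that $\max_u$ of the support sizes is attained coordinatewise (it is, since each product support size equals $|\supp\eta_u|^K$).
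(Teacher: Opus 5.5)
Your proposal is correct and follows essentially the paper's route. You approximate $g/K$ by a quasi-uniform vector, renormalize by $\log_2 n$ using $\max_u g(\{u\})=K$, and take independent coordinatewise copies so that the maximal support size becomes a $K$-th power. The only difference is cosmetic: you obtain $n\ge n_0$ by growing the type-class block length, whereas the paper fixes one quasi-uniform vector and takes $r$ i.i.d. copies with $K\mid r$, which gives largeness and the $K$-th-power property at once while leaving the normalized error exactly unchanged.
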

\begin{proof}

Since $\clGent{U}$ is a cone, $\frac{1}{K}\cdot g$ is an almost entropic polymatroid. By approximation by quasi-uniform random variables~\cite{2008Yeung}, for every $\epsilon' > 0$ there exists $n\ge 2$, $c > 0$, and a quasi-uniform random vector $\eta_U$ such that $n=\max_{u\in U}\lbr{\supp(\eta_u)}$ and
\begin{equation}\label{eq:eta-almost-equal-g}
\forall A\seq U\quad \lbr{\frac{1}{c}\Ent(\eta_A) - \frac{g(A)}{K}} \le \epsilon'.
\end{equation}

Since $K=\max_{u\in U}g(\fbr{u})$, we have
\[\lbr{\frac{\log_2(n)}{c} - 1} \le \epsilon',\]
so
\[\forall A\seq U\quad \lbr{\frac{1}{c}\Ent(\eta_A) - \frac{1}{\log_2(n)}\Ent(\eta_A)}\le \frac{\Ent(\eta_A)}{\log_2(n)}\cdot\lbr{\frac{\log_2(n)}{c} - 1} \le \lbr{U}\cdot \epsilon'.\]

Thus, for $c'=\log_2(n)$, the inequality~\eqref{eq:eta-almost-equal-g} holds with $\wt{\epsilon}=(\lbr{U}+1)\epsilon'$.
Since $\epsilon'>0$ can be taken arbitrarily small, we may assume that $c=\log_2(n)$ in~\eqref{eq:eta-almost-equal-g}.

For every $r\ge 1$, we can take $r$ independent and identically distributed copies $(\eta_{U,i})_{i\in[r]}$ of $\eta_U$. Then the random vector $\eta'_U=((\eta_{u,i})_{i\in [r]})_{u\in U}$ is quasi-uniform, $\max_{u\in U}\lbr{\supp(\eta'_u)}=n^r$, and $\eta'_U$ satisfies \eqref{eq:eta-almost-equal-g} for $c'=r\cdot c$. Hence we may assume that $m=n^{1/K}$ is an integer and that $n$ can be taken arbitrarily large.
\end{proof}

Fix $\epsilon > 0$. Take a quasi-uniform random vector $\eta_U$ from Claim~\ref{claim:exists-quasi-uniform}.

We may assume that, for each $u\in U$, $\supp(\eta_u)\seq [n]$. Let $\mc{F}=\supp(\eta_U)\seq [n]^U$.

For each $u\in U$, take a uniformly random function $h_u:[n]\to [m]^{T_u}$. Let $h_U=(h_u)_{u\in U}:[n]^U\to [m]^V$.

With high probability, $h_U$ has few collisions.
\begin{claim}\label{claim:exists-good-h}
    For sufficiently large $n$, there exists $h_U$ such that, for every $C\seq V$ and $A\seq U$ satisfying $C\seq f(A)$ and $\rk_N(C)=g(A)$, we have
    \begin{equation}\label{eq:h-have-few-collisions}
        \forall X\in [m]^{C}\quad \lbr{\fbrc{Z\in \pi_A(\mc{F})}{\pi_C(h_A(Z))=X}} \le n^{4\epsilon}.
    \end{equation}
\end{claim}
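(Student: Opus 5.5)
The plan is a high-moment estimate for a fixed target $X$, followed by Markov's inequality and a union bound. There are finitely many pairs $(A,C)$ and at most $m^{\lbr{V}}=n^{\lbr{V}/K}$ targets $X$, so it suffices to show the following: for each fixed admissible $(A,C)$ and fixed $X\in[m]^C$, the random variable
\[
N_X=\lbr{\fbrc{Z\in\pi_A(\mc{F})}{\pi_C(h_A(Z))=X}}
\]
satisfies $\P(N_X>n^{4\epsilon})\le n^{-D}$ for some $D>\lbr{V}/K$ and all large $n$. I will obtain this from a bound $\E[N_X^t]\le C_t\, n^{2\epsilon t}$, with $C_t$ independent of $n$. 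Markov's inequality then gives $\P(N_X>n^{4\epsilon})\le C_t n^{-2\epsilon t}$, and I choose $t>\lbr{V}/(2\epsilon K)$ so that the union bound still yields a total probability $o(1)$.

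The key combinatorial input is the hypothesis $\rk_N(C)=g(A)$. By the formula for the free expansion, it gives, for every $B\seq A$,
\[
g(B)+\lbr{C\setminus f(B)}\ge \rk_N(C)=g(A), \qquad\text{that is,}\qquad \lbr{C\setminus f(B)}\ge g(A)-g(B).
\]
I will also use two consequences of Claim~\ref{claim:exists-quasi-uniform} and quasi-uniformity. First, $\lbr{\pi_A(\mc{F})}\le n^{g(A)/K+\epsilon}$. Second, every fibre of the projection $\pi_A(\mc{F})\to\pi_B(\mc{F})$ has size
\[
\lbr{\pi_A(\mc{F})}/\lbr{\pi_B(\mc{F})}\le n^{(g(A)-g(B))/K+2\epsilon}.
\]

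To bound $\E[N_X^t]$, I expand it as a sum over ordered $t$-tuples $(Z_1,\ldots,Z_t)$ of elements of $\pi_A(\mc{F})$ and reveal them one by one. For the $j$-th element, let $B_j=\fbrc{u\in A}{Z_{j,u}\in\fbr{Z_{i,u}: i<j}}$ be the set of coordinates where $Z_j$ reuses a value already seen. I also record, for each $u\in B_j$, which earlier index supplies the value; this is at most $t^{\lbr{U}}$ choices, absorbed into $C_t$. Given $B_j$, this pattern, and $Z_1,\ldots,Z_{j-1}$, the vector $\pi_{B_j}(Z_j)$ is determined, so the number of choices of $Z_j$ is at most the fibre size $n^{(g(A)-g(B_j))/K+2\epsilon}$.

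Next I bound the probability. The functions $h_u$ are uniform and independent, so their values at distinct points of $[n]$ are independent and uniform on $[m]^{T_u}$. The coordinates $v\in C\cap T_u$ with $u\notin B_j$ therefore receive fresh values. Conditionally on everything revealed so far, the event $\pi_C(h_A(Z_j))=X$ has probability at most
\[
m^{-\lbr{C\setminus f(B_j)}}=n^{-\lbr{C\setminus f(B_j)}/K}\le n^{-(g(A)-g(B_j))/K}.
\]
The coordinates in $f(B_j)$ are already fixed and contribute a factor of at most $1$. Multiplying these bounds over $j$ and summing over the finitely many patterns gives $\E[N_X^t]\le C_t\,n^{2\epsilon t}$. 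Note that $B_1=\emptyset$ is covered by the same bound, using $\lbr{C}\ge g(A)$.

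The main point requiring care is this sequential conditioning. Repeated coordinate values make the events for different $Z_j$ dependent, so one must check that the fresh randomness at step $j$ lives on exactly the coordinates $C\setminus f(B_j)$. The rank identity $\rk_N(C)=g(A)$, read through the min-formula for $\rk_N$, is exactly what makes the exponents cancel at every step. Everything else reduces to constants depending only on $t$, $U$ and $V$, which are fixed before taking $n$ large.
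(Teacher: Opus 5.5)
Your proposal is correct and follows essentially the same route as the paper: sequential conditioning on the tuple $Z_1,\ldots,Z_t$, the inequality $\lbr{C\setminus f(B)}\ge g(A)-g(B)$ read off the min-formula for $\rk_N$, and the quasi-uniform fibre count together give $\E[N_X^t]\le C_t\, n^{2\epsilon t}$, which is the paper's bound $(q!)^{\lbr{A}}n^{2\epsilon q}$. The only difference is that you fix the moment order $t>\lbr{V}/(2\epsilon K)$, so your polynomial tail already suffices for the union bound over the $n^{\lbr{V}/K}$ targets. The paper instead lets $q=\floor{n^{\epsilon/\lbr{U}}}$ grow with $n$ and obtains a superpolynomially small tail.
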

\begin{proof}
    Fix $C\seq V$, $A\seq U$, and $X\in[m]^{C}$.

    For $Z\in \pi_A(\mc{F})$, define the random variable $\xi_Z=I(\pi_C(h_A(Z))=X)$.
    Let $\xi=\sum_{Z\in \pi_A(\mc{F})} \xi_Z$.

    We need to prove that $\P(\xi > n^{4\epsilon})$ is small. We do this by estimating the moments of $\xi$ by a method similar to that used in~\cite{2004Janson}.

    Take an arbitrary integer $r\in\Nz$ and arbitrary $Z_1,\ldots,Z_r\in\pi_A(\mc{F})$. We estimate the conditional expectation $\E(\xi\mid \forall i\in[r]\ \xi_{Z_i}=1)$.

    Take $Z\in \pi_A(\mc{F})$, and let $B=\fbrc{u\in A}{\exists i\in[r]\ \pi_u(Z)=\pi_u(Z_i)}\seq A$. Then
    \[\E(\xi_Z\mid \forall i\in[r]\ \xi_{Z_i}=1)=m^{-\lbr{C\setminus f(B)}}.\]
    We have
    \[g(A)=\rk_N(C)\le g(B) + \lbr{C\setminus f(B)},\]
    so
    \[\E(\xi_Z\mid \forall i\in[r]\ \xi_{Z_i}=1)\le m^{g(B)-g(A)}.\]

    Since $\eta_A$ is quasi-uniform, for each $B\seq A$ and $Y\in \pi_B(\mc{F})$, there are exactly $\frac{\lbr{\pi_A(\mc{F})}}{\lbr{\pi_B(\mc{F})}}$ choices of $Y'\in \pi_A(\mc{F})$ such that $\pi_B(Y')=Y$.

    Thus
    \[\E(\xi\mid \forall i\in[r]\ \xi_{Z_i}=1)\le \sum_{B\seq A} r^{\lbr{B}}\cdot \frac{\lbr{\pi_A(\mc{F})}}{\lbr{\pi_B(\mc{F})}}\cdot m^{g(B)-g(A)}\le \sum_{B\seq A} r^{\lbr{B}} \cdot n^{2\epsilon}=(r+1)^{\lbr{A}}\cdot n^{2\epsilon}.\]

    Then, for every $r\in\Nz$,
    \begin{align*}
        \E(\xi^{r+1})&=\sum_{Z_1,\ldots,Z_r\in \pi_A(\mc{F})}\P(\forall i\in[r]\ \xi_{Z_i}=1)\cdot \E(\xi\mid \forall i\in[r]\ \xi_{Z_i}=1)\\
        &\le (r+1)^{\lbr{A}}\cdot n^{2\epsilon} \cdot \sum_{Z_1,\ldots,Z_r\in \pi_A(\mc{F})}\P(\forall i\in[r]\ \xi_{Z_i}=1) = (r+1)^{\lbr{A}}\cdot n^{2\epsilon}\cdot \E(\xi^r).
    \end{align*}

    By induction, for every $q\in \Nz$,
    \[\E(\xi^q)\le (q!)^{\lbr{A}}\cdot n^{q\cdot 2\epsilon}\le (q^{\lbr{U}}\cdot n^{2\epsilon})^{q}.\]

    Take $a=n^{4\epsilon}$ and $q=\floor{n^{\epsilon/\lbr{U}}}$. By Markov's inequality,
    \[\P(\xi \ge a)\le (q^{\lbr{U}}\cdot n^{-2\epsilon})^q \le n^{-\epsilon\cdot q}=\exp(-\epsilon\ln(n)\cdot \floor{n^{\epsilon/\lbr{U}}}).\]
    This is superpolynomially small. The number of choices of $C$, $A$, and $X$ is at most
    $2^{\lbr{V}}\cdot 2^{\lbr{U}}\cdot n^{\lbr{V}/K}$, which is polynomial in $n$. Therefore, for sufficiently large $n$, there exists $h_U$ with the required properties.
\end{proof}

Take $h_U$ from Claim~\ref{claim:exists-good-h}. Let $\mc{G}=h_U(\mc{F})\seq[m]^V$.

By taking $C=V$ and $A=U$, we have
\[\forall G\in \mc{G}\quad \lbr{\fbrc{F\in\mc{F}}{h_U(F)=G}}\le n^{4\epsilon},\]
so
\[\lbr{\mc{G}}\ge \lbr{\mc{F}}\cdot n^{-4\epsilon}\ge n^{\frac{g(U)}{K}-5\epsilon}.\]

\begin{claim}\label{claim:G-prime-is-good}
    There exists $\mc{G}'\seq \mc{G}$ such that
    \[\lbr{\mc{G}'} \ge \lbr{\mc{G}}\cdot n^{-4\epsilon\cdot 2^{\lbr{V}}\cdot 2^{\lbr{U}}}\]
    and for each $C\seq V$ and $A\seq U$ such that $C\seq f(A)$ and $\rk_N(C)=g(A)$ we have
    \begin{equation}\label{eq:G-prime-is-good}
        \lbr{\pi_C(\mc{G}')}=\lbr{\pi_{f(A)}(\mc{G}')}.
    \end{equation}
\end{claim}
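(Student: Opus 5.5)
We will obtain $\mc{G}'$ by a pruning procedure, handling the admissible pairs $(C,A)$ one at a time. Fix an enumeration of all pairs $(C,A)$ with $C\seq V$, $A\seq U$, $C\seq f(A)$ and $\rk_N(C)=g(A)$; there are at most $2^{\lbr{V}}\cdot 2^{\lbr{U}}$ of them. Start with $\mc{G}_0=\mc{G}$. For each pair in turn, pass from $\mc{G}_{j-1}$ to a subfamily $\mc{G}_j$ that loses at most a factor $n^{4\epsilon}$ and satisfies \eqref{eq:G-prime-is-good} for that pair. We also need \eqref{eq:G-prime-is-good} to survive later prunings, so each pruning step must be a selection of whole fibres in a suitable sense. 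Then $\mc{G}'=\mc{G}_{\text{last}}$ satisfies the required bound.

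For a single pair $(C,A)$, note that $\pi_C(G)$ is determined by $\pi_{f(A)}(G)$, since $C\seq f(A)$. So \eqref{eq:G-prime-is-good} says exactly that the map $\pi_{f(A)}(\mc{G}_j)\to\pi_C(\mc{G}_j)$ is injective. Let $X\in\pi_C(\mc{G}_{j-1})$. Every element of $\pi_{f(A)}(\mc{G}_{j-1})$ over $X$ has the form $h_A(Z)$ for some $Z\in\pi_A(\mc{F})$ with $\pi_C(h_A(Z))=X$. By \eqref{eq:h-have-few-collisions}, at most $n^{4\epsilon}$ such values lie over $X$. For each $X$, keep only one value $Y_X\in\pi_{f(A)}(\mc{G}_{j-1})$ over it, namely one maximizing the number of $G\in\mc{G}_{j-1}$ with $\pi_{f(A)}(G)=Y_X$. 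Let $\mc{G}_j$ consist of those $G$ with $\pi_{f(A)}(G)=Y_{\pi_C(G)}$. By pigeonhole, $\lbr{\mc{G}_j}\ge\lbr{\mc{G}_{j-1}}\cdot n^{-4\epsilon}$, and in $\mc{G}_j$ the map $\pi_{f(A)}\to\pi_C$ is injective. The collision bound \eqref{eq:h-have-few-collisions} is stated for $\pi_A(\mc{F})$, so it continues to hold for any subfamily of $\mc{G}$.

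Preservation is the main obstacle, but it is mild here. Injectivity of $\pi_{f(A)}(\mathcal{H})\to\pi_C(\mathcal{H})$ is monotone under passing to subfamilies $\mathcal{H}'\seq\mathcal{H}$. If two members of $\mathcal{H}'$ agree on $C$, they are members of $\mathcal{H}$ agreeing on $C$, hence they agree on $f(A)$. So once \eqref{eq:G-prime-is-good} holds for a pair, it holds in all later $\mc{G}_{j'}$. The only point to check is that each step uses the collision bound for $\mc{G}_{j-1}\seq\mc{G}$ rather than for $\mc{G}$, which is fine as noted. After all at most $2^{\lbr{V}}2^{\lbr{U}}$ steps, the losses multiply to $n^{-4\epsilon\cdot2^{\lbr{V}}2^{\lbr{U}}}$, which gives the claim.
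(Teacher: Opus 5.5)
Your proof is correct and follows the paper's argument: for each admissible pair $(C,A)$ you keep, over each $X\in\pi_C$, only the largest of the at most $n^{4\epsilon}$ fibres of $\pi_{f(A)}$, which loses a factor $n^{4\epsilon}$ per pair. Beyond the paper's one-line ``repeating this procedure,'' you also make explicit why earlier equalities survive later prunings (injectivity of $\pi_{f(A)}\to\pi_C$ is inherited by subfamilies) and why the collision bound still applies to the pruned families.
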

\begin{proof}
Take $C\seq V$ and $A\seq U$ such that $C\seq f(A)$ and $\rk_N(C)=g(A)$.

By \eqref{eq:h-have-few-collisions}, for each $X\in \pi_C(\mc{G})$ there are at most $n^{4\epsilon}$ choices of $Y\in \pi_{f(A)}(\mc{G})$ such that $\pi_C(Y)=X$. For each $X$, choose $Y_X\in \pi_{f(A)}(\mc{G})$ such that
\[\lbr{\mc{G}[Y_X]}\ge \lbr{\mc{G}[X]}\cdot n^{-4\epsilon},\]
where, for $D\seq V$ and $Z\in\pi_D(\mc{G})$, $\mc{G}[Z]=\fbrc{G\in\mc{G}}{\pi_D(G)=Z}$. We remove all $G\in \mc{G}$ such that $\pi_C(G)=X$ and $\pi_{f(A)}(G)\neq Y_X$. Doing this for each such $X$, we obtain $\mc{G}'$ with
\[\lbr{\mc{G}'}=\sum_{X\in \pi_C(\mc{G})}\lbr{\mc{G}[Y_X]}\ge \sum_{X\in \pi_C(\mc{G})}\lbr{\mc{G}[X]}\cdot n^{-4\epsilon}=\lbr{\mc{G}}\cdot n^{-4\epsilon}\]
and $\lbr{\pi_C(\mc{G}')}=\lbr{\pi_{f(A)}(\mc{G}')}$.

Repeating this procedure for each $C$ and $A$ gives the claim.
\end{proof}

\begin{claim}
    The family $\mc{G}'$ is an $\Flats(N)$-system.
\end{claim}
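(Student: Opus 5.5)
We need to show that for every pair $G,G'\in\mc{G}'$ the agreement set $X=\fbrc{v\in V}{\pi_v(G)=\pi_v(G')}$ is a flat of $N$. By Proposition~\ref{prop:M-system-via-closure}, applied with the closure system $\Flats(N)$ and its closure operator $\cl_N$, it suffices to prove that for every $C\seq V$ we have $\lbr{\pi_C(\mc{G}')}=\lbr{\pi_{\cl_N(C)}(\mc{G}')}$. Equivalently, $\pi_C(G)$ determines $\pi_D(G)$ for $D=\cl_N(C)$, for all $G\in\mc{G}'$. The plan is to reduce this to the equalities \eqref{eq:G-prime-is-good} from Claim~\ref{claim:G-prime-is-good}.

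Fix $C\seq V$ and let $D=\cl_N(C)$, so that $\rk_N(D)=\rk_N(C)$. By the definition of the free expansion, there is $A\seq U$ attaining the minimum $\rk_N(D)=g(A)+\lbr{D\setminus f(A)}$. Split $D=D_1\cup D_2$ with $D_1=D\cap f(A)$ and $D_2=D\setminus f(A)$. The first key step is to check that the pair $(D_1,A)$ satisfies the hypothesis of Claim~\ref{claim:G-prime-is-good}, namely $D_1\seq f(A)$ and $\rk_N(D_1)=g(A)$. For this, note that $\rk_N(D_1)\le g(A)$ by taking $A$ itself in the minimum. Also, $\rk_N(D)\le \rk_N(D_1)+\lbr{D_2}$, which gives $\rk_N(D_1)\ge g(A)$. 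The same argument applies to $C_1=C\cap f(A)$. Writing $C_2=C\setminus f(A)$, submodularity and minimality show that $\rk_N(C)=\rk_N(D)$ forces $\rk_N(C_1)=g(A)$ and $C_2=D_2$. Concretely, $\rk_N(C)\le g(A)+\lbr{C_2}\le g(A)+\lbr{D_2}=\rk_N(D)$, so equality must hold throughout, which forces $\lbr{C_2}=\lbr{D_2}$ and hence $C_2=D_2$. On the other hand, $\rk_N(C)\le\rk_N(C_1)+\lbr{C_2}$ gives $\rk_N(C_1)\ge g(A)$, and $\rk_N(C_1)\le g(A)$ holds trivially.

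Now apply \eqref{eq:G-prime-is-good} to $(C_1,A)$. It gives $\lbr{\pi_{C_1}(\mc{G}')}=\lbr{\pi_{f(A)}(\mc{G}')}$, so on $\mc{G}'$ the projection onto $C_1$ determines the projection onto $f(A)\supseteq D_1$. Since $C=C_1\cup D_2$, the projection $\pi_C(G)$ determines $\pi_{D_1}(G)$ and $\pi_{D_2}(G)$, and hence $\pi_D(G)$. Therefore $\lbr{\pi_C(\mc{G}')}=\lbr{\pi_{D}(\mc{G}')}$, and Proposition~\ref{prop:M-system-via-closure} finishes the proof.

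The main obstacle is the rank bookkeeping in the second paragraph: showing that a single minimizer $A$ for $D$ also gives $\rk_N(C\cap f(A))=g(A)$, and that the portions of $C$ and $D$ lying outside $f(A)$ coincide. A possible fallback, if that step fails, is to choose the minimizer $A$ for $C$ instead. One would then need to show that $D\setminus f(A)=C\setminus f(A)$: adding any element of $D$ outside $f(A)\cup C$ would raise $g(A)+\lbr{\cdot\setminus f(A)}$, and one must argue that it cannot be compensated by any other choice of $A$.
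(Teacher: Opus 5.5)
Your proof is correct and follows essentially the same route as the paper. In both, one takes a minimizer $A$ for $\rk_N(D)$, deduces $C\setminus f(A)=D\setminus f(A)$ and $\rk_N(C\cap f(A))=g(A)$, and applies \eqref{eq:G-prime-is-good} to $(C\cap f(A),A)$; the paper argues for arbitrary $C\seq D$ of equal rank, which covers your case $D=\cl_N(C)$, and your separate check for $(D_1,A)$ and the fallback paragraph are unnecessary because the main bookkeeping already goes through.
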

\begin{proof}
    Fix $C\seq D\seq V$ such that $\rk_N(C)=\rk_N(D)$. We need to prove that $\lbr{\pi_C(\mc{G}')}=\lbr{\pi_D(\mc{G}')}$.

    Take $A\seq U$ such that
    \[\rk_N(D)=g(A) + \lbr{D\setminus f(A)}.\]
    Then
    \[\rk_N(C)\le g(A) + \lbr{D\setminus f(A)} - \lbr{(D\setminus C)\setminus f(A)}.\]
    Therefore $D\setminus C\seq f(A)$, and $D\setminus f(A)=C\setminus f(A)$.

    Take $C'=C\cap f(A)$. Then
    \[\rk_N(C)\le \rk_N(C') + \lbr{C\setminus f(A)} \le g(A) + \lbr{C\setminus f(A)}=\rk_N(C),\]
    so $\rk_N(C')=g(A)$. By \eqref{eq:G-prime-is-good},
    \[\lbr{\pi_{C'}(\mc{G}')}=\lbr{\pi_{f(A)}(\mc{G}')}.\]
    Hence
    \[\lbr{\pi_{D}(\mc{G}')}\le \lbr{\pi_{f(A)\cup (C\setminus f(A))}(\mc{G}')}=\lbr{\pi_{C'\cup (C\setminus f(A))}(\mc{G}')}=\lbr{\pi_C(\mc{G}')}\le \lbr{\pi_{D}(\mc{G}')}.\]
\end{proof}

Thus we have an $\Flats(N)$-system $\mc{G}'\seq [m]^V$ such that $m\ge 2$ and
\[\lbr{\mc{G}'}\ge n^{\frac{g(U)}{K}-5\epsilon-4\epsilon\cdot 2^{\lbr{V}}\cdot 2^{\lbr{U}}}=m^{\rk(N)-\delta(\epsilon)}.\]
Here $\delta(\epsilon)\to 0$ as $\epsilon\to 0$.

Therefore $\alpha(\Flats(N))=\rk(N)$, and the proof follows from Theorem~\ref{thrm:alpha-le-eta} and Proposition~\ref{prop:matroid-rank-function-has-unique-solution}. For completeness, we give the remaining proof here.

Take a uniform random vector $\xi_V$ supported on $\mc{G}'$. Define a function $g':2^V\to \R$ by
\[\forall C\seq V\quad g'(C)=\frac{\Ent(\xi_C)}{\log_2(m)}.\]
Then $g'$ is an almost entropic $1$-polymatroid. Since $\mc{G}'$ is an $\Flats(N)$-system, $g'$ is compatible with $\Flats(N)$ and
\[g'(V) \ge \rk(N) - \delta(\epsilon).\]
Since $\delta(\epsilon)$ can be taken arbitrarily small and the family of almost entropic $1$-polymatroids is a closed bounded subset of $\R^{2^V}$, there exists an almost entropic $1$-polymatroid $\wt{g}:2^V\to \R$ compatible with $\Flats(N)$ such that $\wt{g}(V)=\rk(N)$. By Proposition~\ref{prop:matroid-rank-function-has-unique-solution}, we have $\wt{g}=\rk_N$, and therefore $N$ is an almost entropic matroid.

\section{Proof of Proposition~\ref{prop:counterexample-3-19-10}}\label{appendix:example-10}

\begin{proof}
    Let $U = \{1, 2, 3, 4, 5, 6, 7, 8, 9, 10\}$ and consider the following closure system $\mc{M}$ on $U$:
    \begin{gather*}
        \fbr{1, 2, 3, 4, 5, 6, 7, 8, 9, 10},\\
        \fbr{1, 4, 5, 6, 7, 9, 10},\\
        \fbr{1, 2, 4, 5, 6, 8},\\
        \fbr{1, 2, 7, 8, 9}, \fbr{1, 3, 4, 7, 9},\\
        \fbr{1, 3, 7, 8}, \fbr{1, 4, 5, 6}, \fbr{1, 4, 7, 9}, \fbr{1, 6, 8, 10},\\
        \fbr{1, 2, 8}, \fbr{1, 3, 7}, \fbr{1, 6, 8}, \fbr{1, 6, 10}, \fbr{1, 7, 8}, \fbr{1, 7, 9}, \fbr{2, 3, 10}, \fbr{2, 5, 6}, \fbr{3, 5, 10},\\
        \fbr{1, 4}, \fbr{1, 6}, \fbr{1, 7}, \fbr{1, 8}, \fbr{3, 6}, \fbr{3, 10}, \fbr{5, 6}, \fbr{5, 10},\\
        \fbr{1}, \fbr{2}, \fbr{3}, \fbr{5}, \fbr{6}, \fbr{10},\\
        \emptyset.
    \end{gather*}
    One can verify that this family is indeed intersection-closed and that $\rk(\mc{M}) = 3$.

    Let $(U,g)$ with $g:2^U\to\mathbb{R}$ be an arbitrary $1$-polymatroid compatible with $\mc{M}$. Multiplying each inequality in Figure~\ref{fig:ineq-system} by the coefficient shown in its first column, and then summing all the resulting inequalities, we obtain
    \[
        g(U) \le \frac{19}{10}.
    \]
    Hence $\rho(\mc{M}) \le \frac{19}{10}$, and by Corollary~\ref{corollary:alpha-le-rho}, $\alpha(\mc{M}) \le \frac{19}{10}$.

    \begin{figure}
        \centering
        \begin{alignat*}{4}
             & \frac{1}{10}\cdot\Big|\quad g(\fbr{2})                                                & \le & \ 1;                                                  \\
             & \frac{4}{10}\cdot\Big|\quad g(\fbr{3})                                                & \le & \ 1;                                                  \\
             & \frac{1}{10}\cdot\Big|\quad g(\fbr{5})                                                & \le & \ 1;                                                  \\
             & \frac{3}{10}\cdot\Big|\quad g(\fbr{10})                                               & \le & \ 1;                                                  \\
             & \frac{2}{10}\cdot\Big|\quad g(\fbr{1, 4})\ \Big(= g(\cl_{\mc{M}}(\fbr{4})) \Big)\  & \le & \ 1;                                                  \\
             & \frac{5}{10}\cdot\Big|\quad g(\fbr{1,8})\ \Big(= g(\cl_{\mc{M}}(\fbr{8}))\Big)\    & \le & \ 1;                                                  \\
             & \frac{3}{10}\cdot\Big|\quad g(\fbr{1,7,9})\ \Big(= g(\cl_{\mc{M}}(\fbr{9}))\Big)\  & \le & \ 1;                                                  \\
             & \frac{1}{10}\cdot\Big|\quad g(\fbr{1,2,7,8,9})                                        & \le & \ g(\fbr{2}) + g(\fbr{1,7});                    \\
             & \frac{4}{10}\cdot\Big|\quad g(\fbr{1,3,7})                                            & \le & \ g(\fbr{1}) + g(\fbr{3});                      \\
             & \frac{1}{10}\cdot\Big|\quad g(\fbr{1,4,5,6})                                          & \le & \ g(\fbr{5}) + g(\fbr{1});                      \\
             & \frac{3}{10}\cdot\Big|\quad g(\fbr{1,6,10})                                           & \le & \ g(\fbr{10}) + g(\fbr{1});                     \\
             & \frac{1}{10}\cdot\Big|\quad g(\fbr{1,4,5,6,7,9,10}) + g(\fbr{1,6})\                & \le & \ g(\fbr{1,4,5,6}) + g(\fbr{1,6,10});           \\
             & \frac{2}{10}\cdot\Big|\quad g(\fbr{1,2,4,5,6,8}) + g(\fbr{1})                      & \le & \ g(\fbr{1,4}) + g(\fbr{1,8});                  \\
             & \frac{3}{10}\cdot\Big|\quad g(\fbr{1,4,5,6,7,9,10}) + g(\fbr{1})                   & \le & \ g(\fbr{1,6}) + g(\fbr{1,7});                  \\
             & \frac{3}{10}\cdot\Big|\quad g(\fbr{1,7,8}) + g(\fbr{1})                            & \le & \ g(\fbr{1,8}) + g(\fbr{1,7});                  \\
             & \frac{4}{10}\cdot\Big|\quad g(\fbr{1,3,4,7,9}) + g(\fbr{1,7})                      & \le & \ g(\fbr{1,3,7}) + g(\fbr{1,7,9});              \\
             & \frac{1}{10}\cdot\Big|\quad g(U) + g(\fbr{1,7,9})                                  & \le & \ g(\fbr{1,2,7,8,9}) + g(\fbr{1,4,7,9});        \\
             & \frac{2}{10}\cdot\Big|\quad g(U) + g(\fbr{1,6})                                    & \le & \ g(\fbr{1,2,4,5,6,8}) + g(\fbr{1,6,10});       \\
             & \frac{3}{10}\cdot\Big|\quad g(U) + g(\fbr{1,7})                                    & \le & \ g(\fbr{1,4,7,9}) + g(\fbr{1,7,8});            \\
             & \frac{4}{10}\cdot\Big|\quad g(U) + g(\fbr{1,4,7,9})                                & \le & \ g(\fbr{1,3,4,7,9}) + g(\fbr{1,4,5,6,7,9,10}).
        \end{alignat*}
        \caption{Inequalities for $g$ with assigned coefficients}
        \label{fig:ineq-system}
    \end{figure}
\end{proof}

\end{document}